 \tikzset{dot/.style={circle,fill=#1,inner sep=0,minimum size=4pt}}
\newcommand{\Qile}[1]{}
\newcommand{\Yi}[1]{}
\numberwithin{equation}{subsection}
\numberwithin{subsection}{section}
\newenvironment{enumeratea}
{\begin{enumerate}[\upshape (a)]}
{\end{enumerate}}
\newenvironment{enumerate1}
{\begin{enumerate}[\upshape (1)]}
{\end{enumerate}}
\newtheorem*{namedtheorem}{\theoremname}
\newcommand{\theoremname}{testing}
\theoremstyle{plain}
\newtheorem{thm}{Theorem}[section]
\newtheorem{proposition}[thm]{Proposition}
\newtheorem{proposition-definition}[thm]{Proposition-Definition}
\newtheorem{lemma-definition}[thm]{Lemma-Definition}
\newtheorem{corollary}[thm]{Corollary}
\newtheorem{lemma}[thm]{Lemma}
\theoremstyle{definition}
\newtheorem{definition}[thm]{Definition}
\newtheorem{term}[thm]{Terminology}
\newtheorem{notation}[thm]{Notation}
\newtheorem{example}[thm]{Example}
\newtheorem{remark}[thm]{Remark}
\newtheorem{hyp}[thm]{Hypothesis}
\theoremstyle{remark}
\numberwithin{thm}{section}
\newcommand\ocM{\overline{\mathcal{M}}}
\newcommand\ocN{\overline{\mathcal{N}}}
\newcommand\cA{\mathcal{A}}
\newcommand\cC{\mathcal{C}}
\newcommand\cF{\mathcal{F}}
\newcommand\cM{\mathcal{M}}
\newcommand\cO{\mathcal{O}}
\newcommand\cT{\mathcal{T}}
\newcommand\cV{\mathcal{V}}
\newcommand\N{\mathcal{N}}
\def\O{\mathcal{O}}
\def\P{\mathbb{P}}
\def\A{\mathbb{A}}
\def\M{\mathcal{M}}
\def\N{\mathbb{N}}
\def\Q{\mathbb{Q}}
\def\X{\mathbf{X}}
\def\o{\overline}
\def\u{\underline}
\newcommand\uB{\underline{B}}
\newcommand\uC{\underline{C}}
\newcommand\uD{\underline{D}}
\newcommand\uF{\underline{F}}
\newcommand\uf{\underline{f}}
\newcommand\uM{\underline{M}}
\newcommand\uP{\underline{P}}
\newcommand\uR{\underline{R}}
\newcommand\uS{\underline{S}}
\newcommand\uT{\underline{T}}
\newcommand\uU{\underline{U}}
\newcommand\uV{\underline{V}}
\newcommand\uX{\underline{X}}
\newcommand\uY{\underline{Y}}
\newcommand\uZ{\underline{Z}}
\renewcommand\AA{\mathbb{A}}
\newcommand\NN{\mathbb{N}}
\newcommand\PP{\mathbb{P}}
\newcommand\QQ{\mathbb{Q}}
\newcommand\ZZ{\mathbb{Z}}
\newcommand\fC{\mathfrak{C}}
\newcommand\fM{\mathfrak{M}}
\newcommand\fT{\mathfrak{T}}
\newcommand\fX{\mathfrak{X}}
\newcommand\kk{\mathbf{k}}  %%% base field
\newcommand\arr{\ifinner\to\else\longrightarrow\fi}
\def\displaytimes_#1{\mathrel{\mathop{\times}\limits_{#1}}}
\def\displayotimes_#1{\mathrel{\mathop{\bigotimes}\limits_{#1}}}
\newcommand\CH{\operatorname{CH}}
\newcommand\pic{\operatorname{Pic}}
\newcommand\spec{\operatorname{Spec}}
\newcommand\ch{\operatorname{char}}
\newcommand\indlim{\varinjlim}
\newcommand\doublelong[2]{\mathbin{\xymatrix{{}\ar@<3pt>[r]^{#1}
\ar@<-3pt>[r]_{#2}&}}}
\newlength{\ignora}
\newcommand{\diff}{\operatorname{d}}
\newcommand{\R}{}
\renewcommand{\setminus}{\smallsetminus}
\newcommand{\Z}{\mathbb{Z}}
\numberwithin{equation}{subsection}
\newcommand{\NE}{\operatorname{NE}}
\newcommand{\co}{\mathfrak{c}} %contact order morphism
\begin{document}

\title{$\AA^1$-curves on log smooth varieties}

\author{Qile Chen}

\author{Yi Zhu}

\thanks{Chen is partially supported by the Simons foundation and NSF grant DMS-1403271.}

\address[Chen]{Department of Mathematics\\
Boston College\\
Chestnut Hill, Ma 02467\\
U.S.A.}
\email{qile.chen@bc.edu}

\address[Zhu]{Department of Mathematics\\
University of Utah\\
Room 233\\
155 S 1400 E \\
Salt Lake City, UT 84112\\
U.S.A.}
\email{yzhu@math.utah.edu}

\date{\today}

\subjclass[2010]{Primary 14M22, Secondary 14M17}

\begin{abstract}
In this paper, we study $\mathbb{A}^1$-connected varieties from log geometry point of view, and prove a criterion for $\mathbb{A}^1$-connectedness. As applications, we provide many interesting examples of $\A^1$-connected varieties in the case of complements of ample divisors, and the case of homogeneous spaces. We also obtain a logarithmic version of Hartshorne's conjecture characterizing projective spaces and affine spaces.

% give a classification of $\mathbb{A}^1$-curves on wonderful compactifications with appropriate assumptions that fits in our framework. These include many interesting cases such as wonderful compactifications of semisimple groups. Such classification naturally implies their $\mathbb{A}^1$-connectedness. 

\end{abstract}
\maketitle

\tableofcontents

%%%%%%%%%%%%%%%%%%%%%%%%%%%%%%%%%%%%%%%%%%%%%%%%Introduction
\section{Introduction}\label{sec:intro}

Throughout this paper, we work over an algebraically closed field $\kk$ of characteristic $\ch \kk \geq 0$. 

\subsection{$\A^1$-curves}

%{\red
%Rational curves on projective varieties plays an important role in algebraic geometry. For example, Mori's study of rational curves on Fano varieties \cite{Mori79} leads the classification theory of higher dimensional varieties and Gromov-Witten theory orginiates from Kontsevich's count of rational curves on projective planes \cite{?}.  \Yi{I intend to remove RCness because this subsection is about rational curves only.Qile:I don't like this paragraph, I prefer the original one below.}
%}

Rational curves on projective varieties have been intensively studied in algebraic geometry. For varieties admitting lots of rational curves, rational connectedness plays a central \R{role}  in the study of birational geometry of higher dimensional varieties \cite{Mori79, Campana, KMM}. Varieties that are (separably) rationally connected admit nice arithmetic properties \cite{Kollar-local, Kollar-Szabo, GHS, dJS1, weak-app}.

%Rationally connected varieties play a central \R{role} in geometry and arithmetic. the study of the geometry of higher dimensional varieties \cite{Mori79, Campana, KMM}. Varieties that are (separably) rationally connected admit nice arithmetic properties \cite{Kollar-local, Kollar-Szabo, GHS, dJS1, weak-app}.

%{\red 
%One would like to study the analogue of rational curves on open varieties. Keel and McKernan \cite{KM} suggest that the right notion of ``rational curve'' on a variety should include {\em $\AA^1$-curves}, in addition to rational curves. This concept has been further developed by Campana \cite{campana04,Campana11} as an important case of {orbifold rational curves}.
%}\Qile{Replace this paragraph with the following?}

One would like to study the analogue of rational curves and rational connectedness on open varieties. Keel and McKernan \cite{KM} suggest that the right notion of ``rational curve'' on a non-proper variety should include both rational curves and {\em $\AA^1$-curves}. Campana \cite{campana04,Campana11} further introduced the notion of {orbifold rational curves} which include $\A^1$-curves as an important case.

%In \cite{campana04,Campana11}, Campana further proposed a program to study birational geometry of pairs using his {\em orbifold rational curves} that include $\AA^1$-curves as an important case.

%In \cite{campana04,Campana11}, Campana further proposed a program to study birational geometry of pairs using his {\em orbifold rational curves} that include $\AA^1$-curves as an important case. For simplicity, we start with $\A^1$-curve in the quasi-projective situation without boundaries.  

%One would like to study the analogue of rational curves in the case of a quasi-projective variety $X^{\circ}$, or equivalently a pair $(\uX, D)$ with $\uX$ projective, $D \subset X$ a divisor, and $X^{\circ} = \uX \setminus D$. In \cite{KM}, Keel and Mckernan suggest that the right notion of ``rational curve'' on a pair $(\uX,D)$ should include {\em $\AA^1$-curves}. In \cite{campana04,Campana11}, Campana further proposed a program to study birational geometry of pairs using his {\em orbifold rational curves} that include $\AA^1$-curves as an important case. For simplicity, we start with $\A^1$-curve in the quasi-projective situation without boundaries.   \Yi{I do not like $X^\circ$}

\begin{definition}\label{def:A1-curve-scheme}%\Yi{to make more accessible, it is better replace $\uX$ by $U$ and drop all underlying. Qile:Is this better?}
An \emph{$\AA^1$-curve} on a scheme $\uU$ is a non-constant, proper morphism $\u{f}: \A^1 \to \uU$. A {\em family of $\A^1$-curves} over a scheme $\uT$ is a morphism $\uf: \A^1 \times \uT \to \uU$ such that for each geometry point $t \in \uT$, $\uf_t$ is an $\A^1$-curve.

%A morphism $\uf: \A^1 \times \uT \to \uU$ is a {\em family of $\A^1$-curves} over a variety $\uT$ if for each geometry point $t \in \uT$, $\uf_t$ is proper. 
\end{definition}

%Campana \cite{campana04,Campana11} using his {\em orbifold rational curves}.

$\A^1$-curves behave in many ways similar to the case of rational curves. They play an essential role in the classification theory of open algebraic surfaces \cite{Kawamata79,Miyanishi-T2,Miyanishi-T1,KM,Zhu14}. In \cite{CZ}, $\AA^1$-curves are used to \R{produce} rational curves on projective varieties via degenerations. 

%{\red
%Despite their importance, a general theory of $\A^1$-curves remains largely open, e.g., log Bend-and-Break problem \cite[Conj. 1.11]{KM}, and requires further investigation. \Yi{again, rule out all RC A1conn statement here. They will appear in next subsection.}
%}\Qile{Replace this paragraph by the following?}

Despite their importance, $\A^1$-curves are much more difficult to construct than rational curves.  Some methods of producing $\A^1$-curves have been studied in \cite{KM,  Lu-Zhang, MG, Svaldi, CZ}. However, the log Bend-and-Break conjecture \cite[1.11]{KM} remains largely open.

%However, a general theory of $\A^1$-curves remains largely open.

%their importance, $\A^1$-curves are much more difficult to construct than rational curves, e.g.

%Iitaka \cite{Iitaka77} proposes a program generalizing the classical theory of complex projective surfaces to open algebraic surfaces using the language of pairs. according to the log Kodaira dimension has been carried out by works of Iitaka \cite{Iitaka81, Iitaka-AG}, Kawamata \cite{Kawamata79}, Miyanishi-Tsunoda \cite{Miyanishi-T2,Miyanishi-T1} and Keel-McKernan \cite{KM}. 

%They play a key role in the classifications of open \R{algebraic} surfaces \cite{Miyanishi, KM, Zhu14}. 

% Further arithmetic applications of $\A^1$-curves over function fields can be found in \cite{rankone, strong}. 

%Despite their importance, $\A^1$-curves are much more difficult to construct than rational curves. $\A^1$-curves on open surfaces has been studied in great details in \cite{Miyanishi, KM}, the case of a smooth boundary in all dimensions has been investigated in \cite{CZ}, and a version of log bend-and-break for quasi-projective varieties has been introduced in \cite{MG} \Yi{even cited here, no need to say in detail. Their paper is fishy.}. However, a general theory of $\A^1$-curves remains largely open, and requires further investigation. 

%%%%%%%%%%%%
\subsection{Main result and new examples of $\A^1$-connected varieties}
%{\red
%In this paper, we study varieties admitting lots of $\AA^1$-curves. }\Qile{The goal you stated there is not the goal, replace by the following?}
%
%The goal of this paper is to prove a new method of producing $\AA^1$-curves. 
%
%Similar to the case of rational curves \cite[IV.(1.1.1), (3.2.2), (3.2.3)]{kollar}, we define:

Similar to uniruledness and rational connectedness \cite[IV.(1.1.1), (3.2.2), (3.2.3)]{kollar}, we define:

%The goal of this paper is to prove a new method that produces many $\AA^1$-curves by studying the geometry of the boundary of some toroidal compactifications. Similar to the case of rational curves \cite[IV.(1.1.1), (3.2.2), (3.2.3)]{kollar}, we define:

%Before introducing the construction, we start with the definition of $\A^1$-connected varieties in the quasi-projective case, and provide some interesting examples that will follow from our construction. 

\begin{definition}\label{def:intro-uniruled-connected}
%\Yi{to avoid further discussion as countable base field, etc, I prefer to assume $\uT$ a variety and $\uX$ a variety. This is the same as Debarre's definition.Qile: Agree, stick to varieties.}
Let $\uU$ be an algebraic variety. 
\begin{enumerate1}
\item$\uU$ is \emph{$\A^1$-uniruled} (resp. \emph{separably $\A^1$-uniruled}) if there exists a variety $\uT$ and a $\uT$-family of \emph{$\AA^1$-curves} $
\u{f}: \uT\times \A^1 \to \uU
$ with $\uf$  dominant (resp. separable dominant).

\item $\uU$ is \emph{$\A^1$-connected} (resp. \emph{separably $\A^1$-connected}) if there exists a variety $\uT$ and
a $\uT$-family of \emph{$\AA^1$-curves} $\u{f}: \uT\times \A^1 \to \uU$ such that 
\[
	(\u{f})^{(2)} := \u{f}\times_{\uT} \u{f}: \uT\times \A^1 \times \A^1 \to \uU\times \uU	
\]
	is dominant (resp. dominant and separable).
\end{enumerate1}
\end{definition}

%$\A^1$-connected varieties are analogue of rationally connected varieties {\red developed by \cite{Campana,KMM}} in the non-proper setting. 
$\A^1$-connected varieties are analogue of rationally connected varieties in the non-proper setting. They share many similar geometric properties. Furthermore, $\A^1$-connectedness has applications in studying integral points over function fields of complex curves \cite{rankone, strong}. 

In this paper, we provide a new criterion of $\A^1$-connectedness:

%{\blue Our main result reduces $\A^1$-connectedness to some properties of the deepest boundary strata of a toroidal compactification!}

%Furthermore, they have nontrivial applications in arithmetic geometry such as Zariski density of integral points \cite{rankone} and \R{strong approximation} \cite{strong} over function fields of complex curves.\Qile{Replace by the following paragraph?}
%
%$\A^1$-connected varieties are the analogue of rationally connected varieties in the non-proper setting. They shared many similar geometric properties. Furthermore, $\A^1$-connectedness has interesting applications in studying integral points over function fields of complex curves \cite{rankone, strong}. 
%%For arithmetic applications, the authors \R{use} the geometry of $\AA^1$-curves to study Zariski density of integral points \cite{rankone} and \R{strong approximation over function fields of complex curves} \cite{strong}.

%{\red
%In \cite{CZ}, we studied $\A^1$-connected varieties whose boundary is simply a smooth divisor. In this paper,
%we gives a much general criterion of $\A^1$-connectedness by studying only the deepest boundary strata of a toroidal compactification of an open variety.\Qile{What is the point of this paragraph?}
%}

\begin{thm}\label{thm:comb-smoothing}
\R{Let $X$ be a log smooth log variety with a proper, separably rationally connected, and fully free center $Y$.} Then %$X$ admits a very free $\A^1$-curve. In particular, 
the loci $X^{\circ} \subset X$ with the trivial log structure is separably $\A^1$-connected.
\end{thm}

%{\red
%When the center $Y$ is a divisor, Theorem \ref{thm:comb-smoothing} specializes to the author's previous result \cite[Cor. 1.10]{CZ}. \Yi{add comparison with CZ}\Qile{Remove this sentence.}
%}

\begin{remark}
We explain the terminologies in the simple normal crossings case. Let $\uX$ be a smooth compacitification of $X^{\circ}$ with simple normal crossings boundary $\uD = \cup_i \uD_i$, and $X$ be the log smooth variety associated to the pair $(\uX, \uD)$, see Example \ref{ex:toroidal}. A {\em center} $Y$ is a deepest stratum of the boundary divisor $\uD$. It is {\em fully free} if there exist free rational curves $R_1, \cdots, R_m$ on $Y$ such that 
\begin{enumerate}
 \item $a_{ij} := \uD_i \cdot R_j  \geq 0$ for each $i,j$,
 \item The matrix of integers $(a_{ij})$ over $\kk$ is of rank $r$ where $r$ is the number of $\uD_i$'s containing $Y$.
\end{enumerate}
We refer to Definition \ref{def:full-free} for fully free centers in the general setting. 
\end{remark}

%A brief review of log geometry is given in Section \ref{ss:into-log}. The log smoothness of $X$ implies that $X^{\circ}$ is open dense in the underlying scheme $\uX$ of $X$. In fact, the pair $(\uX, D := \uX \setminus X^{\circ})$ provides a toroidal compactification of $X^{\circ}$, see Example \ref{ex:toroidal}. A center $Y$ is a deepest stratum of the boundary divisor $D$, see Section \ref{sec:DF-var}. The condition fully free means that there are sufficiently many free rational curves of the center $Y$ whose intersect numbers with components of $D$ are sufficiently general, see Definition \ref{def:full-free}. 

\R{
Theorem \ref{thm:comb-smoothing} is proved in Section \ref{ss:comb-proof}. It provides many interesting examples of $\A^1$-connected varieties. Combining Theorem \ref{thm:comb-smoothing} and Proposition \ref{prop:new-fully}, we prove $\A^1$-connectedness for complements of ample divisors.

\begin{thm}\label{thm:new}
Suppose $\ch\kk =0$. Let $\uD=\uD_1+\cdots+\uD_r$ be a simple normal crossings divisor on a smooth projective variety $\uX$. Assume that	
\begin{enumerate}
		\item $\uD_i$ is smooth and ample for every $i$;
		\item $\uD_1,\cdots, \uD_r$ are linearly independent in $N^{1}_\Q(\uX)$;
		\item the center $\cap_{i=1}^r \uD_i$ is rationally connected of dimension $\ge 2$.
	\end{enumerate}
	Then $\uX\setminus \uD$ is $\A^1$-connected.
\end{thm}

\begin{remark}
By \cite[Lemma 2]{Iitaka-K3} and Corollary \ref{cor:van}, condition (2) in Theorem \ref{thm:new} is a necessary condition for $\A^1$-connectedness. 

%Condition (2) in Theorem \ref{thm:new} is a necessary condition for $\A^1$-connectedness. Indeed, by Corollary \ref{cor:van}, the log irregularity $h^0(\uX,\Omega_{\uX}^1(\log D))$ is zero.  Then \cite[Lem. 2]{Iitaka-K3} implies (2). \Yi{a new question in mind}
\end{remark}

%and their toroidal compactifications, or what we call log homogeneous varieties, see Definition \ref{def:log-homogeneous}. Here $G$ is a semi-simple linear algebraic group, and $H \subset G$ is a spherical subgroup.

Projective homogeneous spaces \R{equipped} with an action of \R{a} reductive group $G$ are rationally connected. Their generalization in the quasi-projective setting are {sober spherical homogeneous spaces} of the form $G/H$, studied by Luna-Vust \cite{Luna-Vust}, Knop \cite{Knop} and Brion \cite{Brion-spherical, Brion-log-homogeneous}. They admit canonical toroidal compactifications, called {\em wonderful compactifications}.

%For example, any semisimple linear algebraic group $G'$ is a sober spherical homogeneous space under the action $G'\times G'$ by left and right multiplication. We refer to Section \ref{not:spherical} for a collection of terminologies and results on spherical varieties. 
%Combining Theorem \ref{thm:comb-smoothing} with Theorem \ref{thm:char0} and \ref{thm:G}, we have:

\begin{thm}[See Theorem \ref{thm:char0}]\label{thm:intro-spherical}
Suppose $\ch\kk=0$. Let $G/H$ be a non-proper sober spherical homogeneous space under a reductive group $G$. Suppose all colors of $G/H$ are of type (b). Then $G/H$ are $\A^1$-connected.

\end{thm}

Type (b) condition, introduced by Luna \cite[Section 1.4]{Luna} (see also Definition \ref{def:b}), is satisfied by a large class of examples, including all semi-simple algebraic groups. This condition implies that the center of the wonderful compactification is fully free. In arbitrary characteristic, we obtain similar result for semisimple groups.
\begin{thm}[See Theorem \ref{thm:G}]\label{thm:intro-group}
A semisimple algebraic group $G$ is separably $\A^1$-connected if $\ch\kk\nmid |\pi_1(G)|$. In particular, simply connected semisimple algebraic groups are separably $\A^1$-connected.
\end{thm}
}

%Indeed, Theorem \ref{thm:char0} and Theorem \ref{thm:G} give a complete classification of $\A^1$-curve classes on such homogeneous spaces, which further implies that \R{``interior effective'' curve classes are represented by $\A^1$-curves}:
%
%\begin{thm}
%Let $X^{\circ}$ be either $G/H$ as in Theorem \ref{thm:intro-spherical} in $\ch \kk = 0$, or a semisimple algebraic group in $\ch \kk \geq 0$. Denote by $\uX$ the wonderful compactification of $X^{\circ}$. Let $\uF \subset \uX$ be the closure of an open curve on $X^{\circ}$, which intersects the boundary non-trivially. Then the curve class $[F]$ in $N_1(\uX)$ is represented by the closure of an $\A^1$-curve on $X^{\circ}.$
%\end{thm}

Indeed, Theorem \ref{thm:char0} and Theorem \ref{thm:G} give a complete classification of $\A^1$-curve classes on such homogeneous spaces, which further implies that \R{``interior effective'' curve classes are represented by log rational curves}:
\R{
\begin{thm}
Let $X^{\circ}$ be either $G/H$ as in Theorem \ref{thm:intro-spherical} in $\ch \kk = 0$, or a semisimple algebraic group in $\ch \kk \geq 0$. Denote by $\uX$ the wonderful compactification of $X^{\circ}$. Let $\uF \subset \uX$ be the closure of a possibly open curve on $X^{\circ}$, which intersects the boundary non-trivially (resp. trivially). Then the curve class $[F]$ in $N_1(\uX)$ is represented by the closure of an $\A^1$-curve (resp. $\PP^{1}$-curve) on $X^{\circ}.$
\end{thm}
}
%%%%%%%%
%%%%%%%%

%When $H$ is sober, thus $G/H$ admits a toroidal compactification with a unique closed $G$-center. Such compactifications are called {\em wonderful varieties}, and they form the building blocks of the theory of spherical varieties. 

%%%%%%%%
%%%%%%%%
%%%%%%%%
\subsection{Log Hartshorne conjecture}
Combining Mori's idea with the theory of $\AA^1$-curves and Keel-McKernan's work \cite{KM}, we identify projective spaces as a distinguished compactification of affine spaces as follows:

\begin{thm}[See Theorem \ref{thm:projective-space}]\label{thm:logH}
	Let $X=(\uX,\uD)$ be a log smooth projective log variety. If the log tangent bundle is ample, then $(\uX,\uD)$ is isomorphic to either the pair $(\P^n,\emptyset)$ or $(\P^n,\text{a hyperplane})$. 
\end{thm}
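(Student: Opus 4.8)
~\textbf{The plan.} The strategy is to mimic Mori's proof of Hartshorne's conjecture (the characterization of $\P^n$ by ampleness of the tangent bundle), but in the logarithmic category, using $\A^1$-curves in the role played by rational curves in Mori's argument. Write $X = (\uX, \uD)$ with log tangent bundle $T_X$ ample. The first observation is that ampleness of $T_X$ forces the log canonical divisor $K_X = K_{\uX} + \uD$ to be anti-ample, so $(\uX, \uD)$ is a log Fano pair; in particular $\uX$ is a Fano variety (with or without the boundary). By the bend-and-break machinery for stable log maps — or more precisely by the results producing $\A^1$-curves developed earlier in the paper (Proposition~\ref{prop:lift-center-rat} and Theorem~\ref{thm:comb-smoothing}), together with the classical existence of rational curves on Fano varieties — one obtains through every point a rational curve, and then analyzes whether it meets $\uD$. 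The dichotomy in the conclusion, $\uD = \emptyset$ versus $\uD$ a hyperplane, should correspond exactly to whether the minimal curves are honest rational curves in $U = \uX \setminus \uD$ or genuine $\A^1$-curves meeting $\uD$ once.

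\textbf{Key steps, in order.} (1) Reduce to showing $\uX \cong \P^n$: ampleness of $T_X = T_{\uX}(-\log \uD)$ implies ampleness of the ordinary tangent bundle $T_{\uX}$ when $\uD \neq \emptyset$ is nonempty is false in general, so instead argue directly: $T_X$ ample implies $-K_X$ ample and the pseudo-index is large. (2) Run a bend-and-break / deformation argument on a minimal free rational or $\A^1$-curve $\ell$ through a general point $x$: ampleness of $T_X$ bounds $-K_X \cdot \ell$ from below by something like $\dim \uX + 1$ (the log degree of a minimal $\A^1$-curve), forcing the family of such curves through two general points, or through one point with prescribed tangent, to have the expected dimension of the VMRT of $\P^n$. (3) Identify the variety of minimal rational tangents: ampleness of $T_X$ should make the tangent map at a general point dominant onto $\P(T_{\uX, x})$ with the VMRT being all of projective space, which by Cho--Miyaoka--Shepherd-Barron / Kebekus-type characterizations pins down $\uX \cong \P^n$. (4) Once $\uX \cong \P^n$, determine $\uD$: a minimal $\A^1$-curve is a line meeting $\uD$ once, so $\uD$ has degree $1$, i.e. $\uD$ is a hyperplane; if instead there are no $\A^1$-curves (all minimal curves avoid $\uD$), then freeness of those lines and the fact that lines through a general point cover $\P^n$ forces $\uD = \emptyset$. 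One must also rule out $\uD$ being reducible or of higher degree by a degree count against the minimal log degree.

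\textbf{Main obstacle.} The crux is Step (3): establishing that ampleness of the \emph{log} tangent bundle, rather than the ordinary tangent bundle, still forces the variety of minimal rational tangents (or minimal $\A^1$-tangents) at a general point to be the full projective space $\P^{n-1}$. In Mori's original argument ampleness of $T_{\uX}$ is used to control the normal bundle $N_{\ell/\uX}$ of a minimal rational curve and to show the evaluation/tangent map is smooth and surjective; here one has only a bound on $T_X \cdot \ell$ and must propagate this through the logarithmic normal bundle sequence $0 \to T_\ell(-\log(\ell \cap \uD)) \to T_X|_\ell \to N \to 0$ and control splitting types. Handling the case where the minimal curves are $\A^1$-curves (with a contact marking on $\uD$) requires using the logarithmic deformation theory and the comb-smoothing of Section~\ref{ss:comb-proof} to guarantee enough deformations fixing the point $x$, and to show these sweep out all tangent directions. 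A secondary subtlety is excluding intermediate cases — e.g. that $\uD$ could a priori be a union of a hyperplane with other components, or that $\uX$ could be a different Fano of high pseudo-index — which I expect to handle by a careful accounting of the minimal log degree $-K_X \cdot \ell$ forced by ampleness of $T_X$ together with the log-adjunction/bend-and-break inequalities, exactly as in the proof of the classical Hartshorne conjecture but bookkeeping the boundary contribution.
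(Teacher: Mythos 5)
Your overall strategy matches the paper's: produce a minimal rational or $\A^1$-curve through a point by bend-and-break, pin down the splitting type of the restricted tangent bundle, invoke the Koll\'ar/Cho--Miyaoka--Shepherd-Barron style characterization of $\P^n$, and finish with a degree count on $\uD$. But there are two genuine gaps. First, you take for granted that the minimal curve through $x\in U$ is either a $\P^1$-curve or an $\A^1$-curve, i.e.\ meets $\uD$ in at most one point. A priori a minimal degree rational curve through $x$ could have $k\geq 2$ contact points with the boundary, and nothing in your outline excludes this. The paper devotes Proposition \ref{prop:bend-break} to exactly this: after maximizing the ``size'' at one contact marking, a dimension count shows the curve moves fixing $x$, giving a ruled surface with a contractible section (through $x$) and at least two further sections from contact markings; positivity of self-intersections of the latter contradicts the two-dimensionality of the effective cone. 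Without this Keel--McKernan-type step your dichotomy in Step (4) does not get off the ground.

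Second, log smoothness of $X$ does not imply smoothness of $\uX$, and your argument (VMRT at a general point, normal bundle sequences on $\uX$, etc.) implicitly assumes $\uX$ is smooth. The paper's remark flags passing from singular to smooth underlying space as the main difficulty; the actual proof works throughout on a log \'etale birational resolution $r:Y\to X$ with $\uY$ smooth, where $-K_Y=r^*(-K_X)$, identifies $\uY\cong\P^n$, and only then argues $r$ is an isomorphism (in the $\A^1$-case because $\Delta_Y$ must be a hyperplane, in the $\P^1$-case because $\Delta_Y=\emptyset$). Finally, your worry about propagating ampleness through the log normal bundle sequence is resolved more simply than you anticipate: the inclusion $f^*T_Y\to f^*T_{\uY}$ has torsion cokernel supported at the contact marking, so $f^*T_{\uY}$ is ample, and the bound $\deg f^*(-K_{\uY})\leq n+1$ from bend-and-break forces $f^*T_{\uY}=\cO(2)\oplus\cO(1)^{\oplus(n-1)}$ directly; no VMRT analysis is needed beyond the standard argument of \cite[Chapter V 3.7]{kollar}.
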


The proof of Theorem \ref{thm:projective-space} will be given in Section \ref{sec:positive-tangent}. When the underlying space $\uX$ is smooth, a simple proof of Theorem \ref{thm:logH} using the result \cite[Theorem 1.1]{CP} can be given. 

%%%%%%%%%%%%%%%%%%
\subsection{The logarithmic method}\label{ss:open-log}
The key to the proof of Theorem \ref{thm:comb-smoothing} is to embrace the log geometry. Our method, which is a continuation of our previous work \cite{CZ}, may be as interesting as the theorem itself.

Let $X^{\circ}$ be a smooth variety. Suppose $X$ is a proper, log smooth variety with the loci of trivial log structures given by $X^{\circ} \subset X$. Denote by $\PP^1_{\infty}$ the log scheme associated to the pair $(\PP^1, \infty)$ where $\infty \in \PP^1$ is a marking. Note that an $\A^1$-curve on $X^{\circ}$ determines a unique $\A^1$-curve on $X$ as below:

\begin{definition}\label{def:A1-curve}
A {\em log rational curve} on $X$ is a non-constant morphism of log schemes $f: \P^1_{\infty} \to X$. It is called an {\em $\A^1$-curve} if $f(\infty) \notin X^{\circ}$. Otherwise, it is called a {\em $\PP^1$-curve}.
\end{definition}

\begin{definition}\label{def:free-A1-curve}
A log rational curve $f: \P^1_\infty\to X$ is {\em free (resp. very free)} if $f^*T_{X}$ is semi-positive (resp. ample), where $T_X$ is the log tangent bundle of $X$.
\end{definition}

Using log deformation theory, we observe that
\begin{proposition}[See Proposition \ref{prop:open-A1-uniruled}, \ref{prop:open-A1-connected}, and \ref{prop:A1-uniruled}] {} \ 
%Let $X$ be a proper log smooth variety as above. Then
\begin{enumerate1}
 \item $X^{\circ}$ is separably $\A^1$-uniruled if and only if $X$ has a free $\A^1$-curve.
 \item $X^{\circ}$ is separably $\A^1$-connected if and only if $X$ has a very free $\A^1$-curve.
\end{enumerate1}
\end{proposition}

To prove Theorem \ref{thm:comb-smoothing}, we use stable log maps of \cite{GS, Chen, AC}. We construct a degenerate stable log map with the underlying stable map lying in the center. By analyzing the log deformation theory, we show that such stable log map can be deformed to a very free $\A^1$-curve, see Section \ref{sec:DF-var}, \ref{sec:comb}.

\subsection{Notations}

%Throughout this paper, we use capital letters such as $X$ and $Y$ for log schemes, and denote by $\uX$ and $\uY$ the corresponding underlying schemes. 

Throughout this paper, all log structures are assumed to be fine and saturated \cite[2]{KKato}. Some useful notions of logarithmic geometry will be reviewed in Section \ref{sec:definition}. We refer to \cite{KKato} for the basics of logarithmic geometry. 

Capital letters such as $X, Y$ are reserved for log schemes with underlying \R{schemes} denoted by $\uX$ and $\uY$ respectively.

%%%%%%%%%%%%
%%%%%%%%%%%%
%%%%%%%%%%%%

%\begin{remark}
%	When the underlying space $\uX$ is smooth, a simple proof of Theorem \ref{thm:logH} using the result \cite[Theorem 1.1]{CP} can be given. So a major task in the proof is to pass from the singular case to the smooth case.\Yi{inaccurate}
%\end{remark}

%\subsection{Notations}
%
%Throughout this paper, all log structures are assumed to be fine and saturated. Some basic and useful notions of logarithmic geometry will be reviewed in Section \ref{ss:log-basic-notation}. 
%
%We refer to \cite{KKato} for the basics of logarithmic geometry. 
%
%Capital letters like $X, Y$ are reserved for log schemes. The underlying \R{scheme} of a log scheme $X$ is denoted by $\uX$.\Yi{referee1 (1.13), underlying scheme?}\Qile{1.13: changed}

\subsection*{Acknowledgments}
The authors are grateful to Professor Dan Abramovich, Steffen Marcus, and Jonathan Wise for useful discussions on log \'etale resolution. In the collaboration with them on \cite{log-bound}, we learned the idea of log \'etale descent, which greatly inspires our construction in the current paper. \R{During} the preparation of this paper, they received a lot of help from Johan de Jong, Yi Hu, Mathieu Huruguen, J\'anos Koll\'ar, Jason Starr, Michael Thaddeus, and Xinwen Zhu.  The authors would like to express their thanks to them. A large part of our work has been done during the first author's visit of the Math Department in the University of Utah in February 2014. We would like to thank the Utah Math Department for its hospitality.

%%%%%%%%%%%%%%%%%%%%%%%%%%%%%%%%%%%%%%%%%%%%%%%%Definitions
\section{Basic definitions}\label{sec:definition}

%\input{BasicDefinitions1.tex} % previous version

%%%%%%%%%%%%%%%%%%%%%%%%%%%%%%%%%%%%%%

%%%%%%%%%%%%%%%%%%%%%%%%%%
%%%%%%%%%%%%%%%%%%%%%%%%%%
%%%%%%%%%%%%%%%%%%%%%%%%%%
\subsection{Log geometry}\label{ss:into-log}
Our construction of very free $\A^1$-curves uses stable log maps \cite{GS, Chen, AC} that are built upon log geometry of Kato-Fontaine-Illusie \cite{KKato}. We briefly recall here some notions in log geometry and stable log maps. Readers who are familiar with stable log maps may skip Section \ref{ss:into-log} and \ref{ss:into-log-map}.

A \emph{log structure} on a scheme $\uX$ is a sheaf of monoids $\cM$ in \'etale topology with a morphism of sheaves of monoids 
$
\alpha:\cM\to \O_{\uX}
$
such that $\alpha$ induces an isomorphism $\alpha^{-1}(\cO^*_{\uX}) \to \cO_{\uX}^*$. Here $\cO_{\uX}$ is viewed as a sheaf of monoids under multiplication. 
For simplicity, We will omit $\alpha$ and refer to $\cM$ as the log structure when there is no danger of confusing. The quotient $\ocM := \cM/\cO_{\uX}^*$ is called the {\em characteristic sheaf} of $\cM$. The pair $X= (\uX, \cM)$ is called a {\em log scheme}. The following example is important to our construction.

%Throughout this paper, all log structures are assumed to be {\em fine and saturated} \cite[2]{KKato}. 
%Note that for any scheme $\uX$, the sheaf of groups $\cO^*_{\uX}$ is a log structure on $\uX$, called the {\em trivial log structure}. This allows us to identify usual schemes with log schemes equipped with the trivial log structure. 

\begin{example}\label{ex:toroidal}
Let $\uD \subset \uX$ be a divisor satisfying
\begin{enumerate}
 \item  $X^{\circ} := \uX \setminus \uD$ is smooth, and
 \item for any point $t \in \uD$, there exists an \'etale neighborhood $\uU \to \uX$ of $t$, and a smooth morphism $\uU \to \uT$ with $\uT$ an affine toric variety with the toric boundary $\u\Delta$ such that $\uD|_{U}$ is given by the pull-back of $\u\Delta$. 
\end{enumerate}
 We call such $(\uX, \uD)$ a {\em toroidal pair}, and associate the following divisorial log structure:
 \[
 \cM(\uV) := \{f \in \cO_{\uV} \ | \ f|_{\uV \setminus \uD} \in \cO^*\} \stackrel{\alpha}{\hookrightarrow}  \cO_{\uX}
 \]
for any \'etale morphism $\uV \to \uX$. We call $X = (\uX, \cM)$ the log scheme associated to the toroidal pair $(\uX, \uD)$. \Yi{is toriodal necessary here? later may hard to refer.}
\end{example}

A \emph{morphism} of log schemes $f: Y = (\uY, \cM_Y) \to X = (\uX, \cM_X)$ is given by a pair $(\uf, f^{\flat})$ consisting of a morphism of the underlying schemes $\uf: \uY \to \uX$ and a morphism of log structures $f^{\flat}: \uf^*\cM_{X} \to \cM_{Y}$, where $\uf^*\cM_{X}$ is the pull-back of $\cM_X$  \cite[1.1, 1.4]{KKato}.

%Similar to the case of coherent sheaves, one could define morphisms between log structures \cite[(1.1)]{KKato}, and pull-back of log structure along a morphism of usual schemes \cite[(1.4)]{KKato}. 

{\em Log smoothness} can be defined using the infinitesimal lifting property \cite[(3.3)]{KKato}. A morphism is {\em log smooth} if and only if it is locally toroidal in the sense of \cite[(3.5)]{KKato}. In particular, the log scheme $X$ in Example \ref{ex:toroidal} is log smooth over $\kk$.

%When $\uX$ is smooth, $T_X$ is a subsheaf of $T_{\uX}$ consisting of vector fields with zeros along the boundary of $X$ with non-trivial log structures. 

\subsection{Stable log maps}\label{ss:into-log-map}
A genus $g$, $n$-marked {\em log curve} over a log scheme $S$ is a pair
$
(\pi: C \to S, \{\sigma_i\}_i)
$
where $(\u{\pi}: \uC \to \uS, \{\sigma_i\}_i)$ is a genus $g$, $n$-marked usual pre-stable curves over $\uS$ with markings $\{\sigma_i\}$, and $\pi$ is a log smooth, integral morphism with local structures described in \cite{FKato, LogCurve}, see also\cite[B.1.1, B.1.2]{Chen}.

We fix a log smooth scheme $X$ as the target. A {\em log map} over $S$ is a morphism of log schemes $f: C \to X$ such that $\pi: C \to S$ is a log curve over $S$. A log map $f$ is called {\em stable} if the underlying morphism $\uf: \uC \to \uX$ is stable in the usual sense. 

%morphism of log schemes $f: C \to X$ over a log scheme $S$ is called a {\em log map} if $\pi: C \to S$ is a log curve over $S$. A log map $f$ is called {\em stable} if the underlying morphism $\uf: \uC \to \uX$ is stable in the usual sense. 

Let $\uS$ be a geometric point. We call a stable log map $f$ over $S$ {\em non-degenerate} if the log structure $\cM_S$ is trivial. Geometrically, this means that $f$ has a smooth underlying source curve, and its underlying image does not completely lie on the loci of $X$ with non-trivial log structures.

Consider a stable log map $f: C \to X$ over $S$.  Along a marking $\sigma: \uS \to \uC$, we have an induced morphism of characteristic sheaves
\begin{equation}\label{equ:contact-order}
c_{\sigma}: (\uf\circ\sigma)^*\ocM_X \stackrel{\bar{f}^{\flat}}{\longrightarrow} \sigma^*\cM_{C} = \ocM_S\oplus \NN \longrightarrow \NN
\end{equation}
where the second arrow is the projection to $\NN$. Such morphism $c_{\sigma}$ is called the {\em contact order} along the marking $\sigma$. The contact order along a marking is a deformation invariance of stable log maps. Indeed, they canonically corresponds to the connected components of the evaluation stacks of stable log maps, see \cite{ACGM} and \cite[Section 5.2]{log-bound}.

Denote by $\Gamma = (g,n,\{c_i\}, \beta)$ the collection of numerical data where $g$ is the genus, $n$ is the number of markings, $c_i$ is the contact order at the $i$-th marking, and $\beta$ is the curve class. It was proved in \cite{GS, Chen, AC, Loghom-wise} that the stack of stable log maps with numerical data $\Gamma$ is represented by an algebraic stack carrying a minimal log structure. We may thus study stable log maps using their deformation theory.  

Consider a stable log map $f: C \to X$ over $S$. By \cite[5.9]{Logcot} the deformation and obstruction theory of $f$ are controlled by $H^0(f^*T_{X})$ and $H^1(f^*T_{X})$ respectively. Here the {\em log tangent bundle} $T_X = \Omega_{X}^{\vee}$ is defined as the dual of the log cotangent bundle of $X$ \cite[1.7 and 3.10]{KKato}. In particular, suppose $\uS$ is a geometric point.  If $H^1(f^*T_{X})=0$, then $f$ can be deformed to a non-degenerate stable log map.  We refer to \cite[2.5]{Chen} for more details of the deformation of log maps.

%One could define the {\em log cotangent sheaf} $\Omega_{X}$ for each log scheme $X$. If $X$ is log smooth, $\Omega_{X}$ is a vector bundle over $\uX$ \cite[1.7 and 3.10]{KKato}. In this case, the dual vector bundle $T_{X} := \Omega^{\vee}_X$ is called the {\em log tangent bundle}. 

%%%%%%%%%%%%%%%%%%%%%%%%%%
%%%%%%%%%%%%%%%%%%%%%%%%%%
%%%%%%%%%%%%%%%%%%%%%%%%%%

\subsection{Log rational curves}\label{ss:A1-connectedness}

%We next formulate $\AA^1$-connectedness and more generally log rational connectedness in the logarithmic setting. The log rational connectedness gives a uniform way including both $\AA^1$-connectedness and rational connectedness. Consequently,  both types of connectedness can be formulated using the existence of very free log rational curves see Proposition \ref{prop:A1-uniruled}.

%We introduce an equivalent definition of $\A^1$-curves using logarithmic geometry. Denote by $\P^1_\infty$ the log scheme associated to the pair $(\P^1,\{\infty\})$ where $\infty \in \P^1$ is a point, see Example \ref{ex:log-ex}(2). Note that $\P^1_\infty \setminus \{\infty\} = \A^1$. On the target side, let $X$ be a log scheme with $X^{\circ} \subset X$ the maximal open subscheme over which the log structure $\cM_X$ is trivial, see Example \ref{ex:log-ex}(1). 

Recall that $\P^1_\infty$ is the log scheme associated to the pair $(\P^1,\{\infty\})$. Let $X$ be a log scheme, and $X^{\circ} \subset X$ the loci with the trivial log structure. By the following lemma, an $\A^1$-curve on a log variety $X$ is the same as an $\A^1$-curve on $X^{\circ}$. 

\begin{lemma}\label{lem:A1-interior}
Assume that $X$ is proper. Then for any morphism $\uf^{\circ}: \A^1 \to X^{\circ}$, there is a unique stable log map $f: \P^{1}_{\infty} \to X$ such that the following diagram is commutative
\[
\xymatrix{
\A^1 \ar[r]^{\uf^{\circ}} \ar[d] & X^{\circ} \ar[d] \\
\P^{1}_{\infty} \ar[r]^{f} & X
}
\]
where the vertical arrows are the corresponding embeddings. Furthermore, there are precisely two possibilities:
\begin{enumerate}
 \item if the contact order of $f$ at $\infty$ is non-trivial, then $\uf^{\circ}$ is an $\A^1$-curve;
 \item if the contact order of $f$ at $\infty$ is trivial, then the underlying morphism $\uf: \PP^1 \to \uX$ of $f$ factors through $X^{\circ}$. 
\end{enumerate}
\end{lemma}
\begin{proof}
Since the underlying scheme $\uX$ is proper, the underlying morphism $\uf$ of $f$ is uniquely determined by $\uf^{\circ}$. To determine $f$, it remains to construct a morphism of sheaf of monoids $f^{\flat}: \uf^{*}\cM_X \to \cM_{\PP^1_{\infty}}$ that fits in the following commutative diagram
\[
\xymatrix{
\uf^{*}\cM_X \ar[rr]^{f^{\flat}} \ar[rd]_{\alpha_1} && \cM_{\PP^{1}_{\infty}} \ar[ld]^{\alpha_2} \\
 & \cO_{\PP^1} &
}
\]
where the two arrows $\alpha_1$ and $\alpha_2$ are the structure arrows of the corresponding log structures. Since the image of $\uf|_{\AA^1} = \uf^{\circ}$ lands in the loci $X^{\circ}$ with the trivial log structure, the image of $\alpha_1$ consists of functions that are invertible away from $\infty$. This implies that $\alpha_1(\uf^*\cM_{X}) \subset \cM_{\P^1_{\infty}} \subset \cO_{\PP^1}$, hence the unique morphism $f^{\flat}$. 

If the contact order of $f$ at $\infty$ is non-trivial, $\uf^*\cM_{X}$ is non-trivial along $\infty$. This implies that $\uf(\infty) \in X\setminus X^{\circ}$. Hence $\uf^{\circ}$ is an $\A^1$-curve.

Otherwise, the contact order of $f$ at $\infty$ is trivial, and the morphism $f^{\flat}$ factors through $\cO^*_{\P^1}$. Hence the log structure $\uf^*\cM_{X}$ is the trivial one, and $f$ factors through $X^{\circ}$.
\end{proof}

Similar to Definition \ref{def:intro-uniruled-connected}, we have the logarithmic version:

\begin{definition}\label{def:uniruled-connected}
Let $X$ be a proper, log smooth variety of dimension $n$. 

\vspace{1mm}

\noindent
$\bullet$ 
$X$ is called \emph{log uniruled} (resp. \emph{separably log uniruled}) if there is a  scheme $\uT$ of dimension $n-1$ and a log morphism 
\begin{equation}\label{equ:uniruled}
f: \P^1_\infty \times \uT\to X
\end{equation}
which is dominant (resp. dominant and separable). $X$ is called {\em $\A^1$-uniruled} (resp. {\em separably $\A^1$-uniruled}), if furthermore $f$ is a family of $\A^1$-curves.

%the above family (\ref{equ:uniruled}) is dominant (resp. dominant and separable), and is further required to have non-trivial contact orders at the marking $\infty$.

\vspace{1mm}

\noindent
$\bullet$ $X$ is called \emph{log rationally connected} (resp. \emph{separably log rationally connected}) if there is a scheme $\uT$ and a log morphism 
$
f:\P^1_\infty\times \uT\to X
$
such that the morphism   
\begin{equation}\label{equ:rc1}
	\begin{split}
	f^{(2)}:	\P^1_\infty\times\P^1_\infty \times  \uT &\to X\times X\\
	(t_1,t_2,y) &\mapsto (f(t_1,y),f(t_2,y))
	\end{split}
\end{equation}
	is dominant (resp. dominant and separable). $X$ is called {\em $\A^1$-connected} (resp. separably $\A^1$-connected) if we further require $f$ to be a family of $\A^1$-curves.
\end{definition}
%the above morphism (\ref{equ:rc1}) has non-trivial contact orders along the marking $\infty$, and the morphism (\ref{equ:rc1}) is dominant (resp. dominant and separable).

%\begin{definition}\label{def:A1-connectedness}
%Let $X$ be a proper, log smooth variety. We say that $X$ is \emph{log rationally connected} (resp. \emph{separably log rationally connected}) if there is a scheme $\uT$ and a log morphism 
%\begin{equation}\label{equ:rc1}
%f:\P^1_\infty\times \uT\to X
%\end{equation}
%such that the morphism   
%\begin{equation}\label{equ:rc1}
%	\begin{split}
%	f^{(2)}:	\P^1_\infty\times\P^1_\infty \times  \uT &\to X\times X\\
%	(t_1,t_2,y) &\mapsto (f(t_1,y),f(t_2,y))
%	\end{split}
%\end{equation}
%	is dominant (resp. dominant and separable).
%	
%The log variety  $X$ is called {\em $\A^1$-connected} (resp. separably $\A^1$-connected) if the above morphism (\ref{equ:rc1}) has non-trivial contact orders along the marking $\infty$, and the morphism (\ref{equ:rc1}) is dominant (resp. dominant and separable).
%\end{definition}

\begin{remark}
The definition of log uniruledness and log rationally connectedness are compatible with Campana's orbifold uniruledness and rational connectedness for boundary with infinite weight \cite[Definition 9.4]{Campana-orb-rat}.
\end{remark}

We observe that the $\A^1$-uniruledness and $\A^1$-connectedness are intrinsic to the open loci of the log variety with the trivial log structure.

\begin{proposition}\label{prop:open-A1-uniruled}
Let $X$ be a proper log smooth variety. Denote by $X^{\circ} \subset X$ the loci with the trivial log structure. The following are equivalent:
\begin{enumerate}
 \item $X$ is $\A^1$-uniruled (resp. separably $\A^1$-uniruled). 
 \item $X^\circ$ is $\A^1$-uniruled (resp. separably $\A^1$-uniruled).  %\Qile{We may move this definition to the introduction.}
\end{enumerate}
\end{proposition}

\begin{proposition}\label{prop:open-A1-connected}
Notations as in Proposition \ref{prop:open-A1-uniruled}, the following are equivalent:
\begin{enumerate}
 \item $X$ is $\A^1$-connected (resp. separably $\A^1$-connected). 
 \item $X^\circ$ is $\A^1$-connected (resp. separably $\A^1$-connected). 
\end{enumerate}
\end{proposition}

We will provide a proof of Proposition \ref{prop:open-A1-connected}. The proof of Proposition \ref{prop:open-A1-uniruled} is similar, and is left to the reader. 

\begin{proof}[Proof of Proposition \ref{prop:open-A1-connected}]
Since contact orders are deformation invariant, over a connected family of stable log maps, the contact order along a fixed marking can be either trivial over the whole family, or non-trivial over the whole family.  The direction ``(1) $\Rightarrow$ (2)'' follows directly from Definition \ref{def:uniruled-connected} by removing the marking $\infty$. 

Conversely, replacing $\uT$ by its normalization, we may assume that $\uT$ is normal. The morphism $f^{\circ}$ induces a rational map 
\[
\xymatrix{
\P^1 \times \uT \ar@{-->}[r] & \uX.
}
\]
Replacing $T$ by an open dense subscheme which is again denoted by $\uT$, we may assume that the above rational map is a morphism:
\[
\uf: \P^1 \times \uT \to \uX. 
\]
Furthermore, since $f^{\circ}_t$ is proper for any $t \in \uT$, the image of the marking $\infty \times \uT$ via $\uf$ is contained in $\uX \setminus X^{\circ}$. A similar argument as for Lemma \ref{lem:A1-interior} implies that $\uf$ can be lifted to a unique log morphism $f: \P^{1}_{\infty}\times \uT \to X$. This provides the family (\ref{equ:rc1}) as needed in Definition \ref{def:uniruled-connected}.
\end{proof}

%\begin{remark}
%	By Proposition \ref{prop:A1def}, $\A^1$-uniruledness and $\A^1$-connectedness are intrinsic to the interior $X^\circ$. In particular, one may define these properties for all usual algebraic schemes.
%end{remark}

\begin{proposition}\label{prop:A1-uniruled}
	Let $X$ be a proper log smooth variety. Then we have
	\begin{enumerate}
		\item 
		$X$ is separably log uniruled if and only if there exists a free log rational curve on $X$; 
		\item $X$ is separably log
		rationally connected if and only if there exists a very free log rational curve on $X$. 
	\end{enumerate}	
\end{proposition}

\begin{proof}
The proof of this proposition is similar to the case of rational curves \cite[IV 1.9, 3.7]{kollar}. For completeness, we sketch the proof of (2). The proof of (1) is similar, and is omitted here.

Denote by $\cA:=\cA_{2}(X)$ the log stack of genus zero stable log maps with a unique marking $\infty$. Let $\cA^{\circ} \subset \cA$ be the open substack with the trivial log structure. Denote by 
$
\cF: \cC \to X, 
$
the universal stable log map over $\cA^{\circ}$. For a log rational curve $[f] \in \cA^{\circ}$, Let $\uT \to \cA^{\circ}$ be a smooth morphism with image containing $[f]$, such that the pull-back of $\cF$ over $\uT$ is given by 
\[
F : \P^1_{\infty}\times \uT \to X.
\]

Assume that $[f] \in \uT$.  Consider the induced morphism:
\[
F^{(2)}: \P^1_{\infty}\times\P^1_{\infty}\times \uT \to X.
\]
Consider two geometric points $p,q \in \A^1 = \P^1_{\infty} \setminus \{\infty\}$.  We calculate that the log differential $\diff F^{(2)}$ at $(p,q,[f])$ is of the form  
\[
\diff F^{(2)}(p,q,[f]):(df(p)+\phi(p,f),df(q)+\phi(q,f)), 
\]
where $df(s)$ is the log differential of $f$ at $s$ and $\phi(s,f)$ is the natural evaluation
\[
\phi(s,f):H^0(\P^1,f^*TX\otimes \cO_{\P^1}(-s))\to f^*TX\otimes \kk(s).
\]

Following the same argument of \cite[II 3.5]{kollar}, we observe that $dF^{(2)}(p,q,[f])$ is surjective if and only if the following is also surjective 
\[
\phi(p,q,f)=(\phi(p,f),\phi(q,f)):H^0(\P^1,f^*TX)\to f^*TX\otimes \kk(p)\oplus f^*TX\otimes \kk(q).
\]
Indeed, since the log tangent bundle of $T_{\P^1_\infty}$ is the line bundle $\O_{\P^1}(1)$, the image of the morphism $(df(p),df(q))$ is contained in the image of $\phi(p,q,f)$. This implies that $dF_2$ is surjective if and only if $\phi(p,q,f)$ is surjective. Observe that the latter condition is equivalent to that $f^*TX$ is positive.
	
Since separable log rational connectedness is equivalent to $F^{(2)}$ is dominant and generic smooth, it is equivalent to the existence of a very free log rational curve as well.
\end{proof}

The following observation is a generalization of \cite[IV 1.11 and 3.8]{kollar}.

\begin{corollary}\label{cor:van}
	Let $X$ be a proper log smooth variety. 
	\begin{enumerate}
		\item If $X$ is separably log uniruled, then $H^0(X, K_X^m)=0$ for every $m>0$.
		\item If $X$ is separably log rationally connected, then $H^0(X, (\Omega^1_X)^{\otimes m})=0$ for every $m>0$.
	\end{enumerate}	
\end{corollary}
\begin{proof}
Here we only verify (2). The proof of (1) is similar and is left to the reader. By Proposition \ref{prop:A1-uniruled}, there exists a log rational curve $f:\P^1_\infty\to X$ such that $f^*T_X$ is positive. Thus $f^*\Omega^1_X$ is the sum of line bundles of negative degree. Therefore, any section of $(\Omega^1_X)^{\otimes m}$ vanishes along $f(\P^1)$. Such very free log rational curves cover a dense open subset of $X$. 
\end{proof}

\section{Curves in the center}\label{sec:DF-var}

\subsection{Centers of Deligne-Faltings type}
Let $X$ be a log scheme. \R{Throughout this paper, we will always assume that $\uX$ to be quasi-compact.} By \cite[Lemma 3.5]{LogStack} there is a canonical stratification $\{X_{\lambda}\}_{\lambda \in \Lambda}$ associated to $X$ such that
\begin{enumerate1}
 \item $X_{\lambda} \to X$ is a connected locally closed subscheme with the pull-back log structure.
 \item The sheaf of groups $\ocM_{X_{\lambda}}^{gp}$ is a locally constant sheaf.
 \item $X = \cup_{\lambda}X_\lambda$ is a disjoint union.
\end{enumerate1}
Denote by $\overline{X}_{\lambda}$ the closure of $X_{\lambda}$ in $X$. $X_{\lambda}$ is called a {\em center} of $X$ if $X_{\lambda} = \overline{X}_{\lambda}$. 

For later use, we reserve the letter $Y$ for a connected center of $X$, and view $Y$ as a log scheme with the log structure pulled back from $X$. We observe that when $X$ is log smooth, $Y$ also has smooth underlying structure, see \cite[Lemma 3.5(ii)]{LogStack}.

\begin{definition}\label{def:DF-center}
A center $Y$ of a log scheme $X$ is called of {\em Deligne-Faltings type} \R{
if there is a fine, saturated, sharp monoid $P$, and a global morphism of sheaves of monoids $P_X \to \ocM_{Y}:= \cM_{Y}/\cO^*_{Y}$ which \'etale locally lifts to a chart of $\cM_{Y}$. Here we view $P_X$ as the global constant sheaf of monoids with coefficients in the $P$. For simplicity, we may identify $P$ with $P_X$ if there no danger of confusion.
}

\end{definition}

\R{

\begin{lemma}\label{lem:dis-chart-DFcenter}
Let $Y$ be a center of some log scheme $X$. Then the following statements are equivalent:
\begin{enumerate}
 \item The sheaf of monoids $\ocM_{Y}$ is globally constant.
 \item The natural morphism $\beta: \Gamma(\uY, \ocM_{Y}) \to \ocM_Y$ is a chart.
  \item $Y$ is of Deligne-Faltings type.
\end{enumerate}
The chart $\beta$ in (2) is called the {\em distinguished chart} of $\cM_{Y}$.
\end{lemma}
\begin{proof}
The equivalence between (1) and (2) is obvious. Clearly, (2) implies (3). It remains to show that (3) implies (1).  Consider the following diagram
\[
\xymatrix{
\Gamma(\uY, \ocM_Y) \ar[rr] \ar[d] && \ocM_Y \ar[d] \\
\Gamma(\uY, \ocM_Y)^{gp} \ar[rr] && \ocM_{Y}^{gp}.
}
\]
Since the vertical arrows are inclusions of sheaves of monoids, it suffices to show that $\ocM_{Y}^{gp}$ is globally constant. Note that $\ocM_{Y}^{gp}$ is locally constant on $\uY$. For any point $y \in Y$, consider the specilization morphism
\[
\phi_y: \Gamma(\uY, \ocM_Y)^{gp} \to \ocM_{Y,y}^{gp}.
\]
Since $Y$ is of Deligne-Faltings type,  one observe that $\phi_y$ is surjective for any $y$. Consider an element $\delta \in \ker \phi_y$. Then we have $\delta = a - b$ for some elements $a, b \in \Gamma(\uY, \ocM_Y)$. Thus, we have $\phi_y(a) = \phi_y(b)$ in $\ocM_{Y,y}^{gp}$. Since $\ocM_{Y}^{gp}$ is locally constant on $\uY$ this implies that $a = b$ as a section in $\ocM_Y$ in a neighborhood of $y$ in $\uY$. Since $\uY$ is connected by the definition of center, $a = b$ over $\uY$. This implies that $\delta = 0$ and $\phi_y$ is an isomorphism for any $y \in Y$. Thus, the sheaf $\ocM_{Y}^{gp}$ is globally constant.
\end{proof}
}

%The following will be used in our construction of very free $\AA^1$-curves in Section \ref{ss:comb-proof} to reduce general cases to the case of simple normal crossings boundaries.
The following observation follows by considering a toroidal modification:

\begin{lemma}\label{lem:center-resolution}
Let $Y$ be a center of a log smooth variety $X$. Let $\pi: X' \to X$ be a \R{proper}, birational, log \'etale morphism, and $Y' \subset X'$ be a center over $Y$. 
Then 
\begin{enumerate1}
 \item $(\bar{\pi}^{\flat})^{gp}|_{Y}: \pi^*\ocM^{gp}_{Y} \to \ocM^{gp}_{Y'}$ is an isomorphism of sheaves of groups.
 \item The underlying schemes $\uY$ and $\uY'$ are smooth in the usual sense.
 \item The underlying morphism $\underline{\pi}|_{\uY'}: \uY' \to \uY$ is finite and \'etale.
\end{enumerate1}
\end{lemma}
\R{
\begin{proof}
Since the statements are local around $Y$, by the log smoothness of $X$, we may assume that $\uX = \spec R[P]$ for some ring $R$, and a fine, saturated, and sharp monoid $P$, with the log structure $\cM_X$ given by the log structure associated to the pre-log structure $P \to R[P]$, see \cite[(1.3)]{KKato}. Then the birational log \'etale morphism $\pi': X' \to X$ is given by toroidal modifications of the underlying toric varieties. Denote by $U \subset X'$ the open log subscheme containing the center $Y'$ over $Y$. We may assume that $\uU = \spec R[P']$ with the log structure $\cM_U$ given by the log structure associated to the pre-log structure $P' \to R[P']$. The morphism $U \to X$ is induced by the morphism $P' \to P$ on the level of monoids. 

Notice that $\ocM_{Y'} = P'$ and $\ocM_{Y} = P$ are both globally constant sheaves of monoids with the morphism $(\bar{\pi}^{\flat})^{gp}|_{Y}: \pi^*\ocM^{gp}_{Y} \to \ocM^{gp}_{Y'}$ given by the morphism $P \to P'$ induced by the toroidal modifications. This proves (1). 

The log smoothness of $X$ implies that $\spec R$ is smooth in the usual sense, see \cite[Theorem (3.5)]{KKato}. Furthermore, in the local case the projection $\uY' = \spec R \to \uY = \spec R$ is just the identity. This proves (2) and (3).  
\end{proof}
}

In this paper, we are particularly interested in the following situation.

\begin{lemma}\label{lem:center-DF}
Let $X$ be a log smooth variety, and $Y \subset X$ \R{be} a center of $X$. Assume $\uY$ is proper and separably rationally connected. Then $\ocM_{Y}$ is a globally constant sheaf of monoids over $\uY$. In particular the center $Y$ is of Deligne-Faltings type.
\end{lemma}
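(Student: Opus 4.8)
### Proof proposal

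The plan is to show that the locally constant sheaf $\ocM_Y$ is in fact globally constant by exhibiting an isomorphism from a constant sheaf $P_Y$, where $P=\ocM_{X,x}$ for a chosen point $x\in\uY$; then "DF-type with distinguished global chart" follows immediately from Definition \ref{def:DF-center}, once we check the lifting-to-a-chart condition. Since $\uY$ is smooth (Lemma \ref{lem:center-resoltuion}(2), or the remark before Definition \ref{def:DF-center}), it is in particular connected, so $\ocM_Y$, being locally constant, is determined by the monodromy representation $\pi_1(\uY,x)\to\Aut(\ocM_{Y,x})=\Aut(P)$. The heart of the argument is that this monodromy is trivial because $\uY$ is separably rationally connected.

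First I would reduce to a statement about $\ocM_Y^{gp}$. The key input is Lemma \ref{lem:center-resoltuion}: after passing to a toroidal resolution $\pi:X'\to X$ making $X'$ log smooth with simple normal crossings boundary, we get a center $Y'\subset X'$ with $\uY'\to\uY$ finite étale, and $(\bar\pi^\flat)^{gp}|_Y:\pi^*\ocM^{gp}_Y\xrightarrow{\sim}\ocM^{gp}_{Y'}$. For the SNC log smooth $X'$, the sheaf $\ocM_{X'}$ is the sheaf of effective Cartier divisors supported on the boundary divisor $D'=\bigcup D'_i$, so $\ocM_{X'}^{gp}$, and hence $\ocM_{Y'}^{gp}$, is a subsheaf of the constant sheaf $\bigoplus_i \Z$ indexed by the boundary components through (a neighborhood of) $Y'$; concretely the monodromy of $\ocM_{Y'}^{gp}$ just permutes the local analytic branches of $D'$ meeting $\uY'$. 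Now a separably rationally connected smooth proper variety has no nontrivial finite étale covers (being simply connected in the relevant sense: any such cover would itself be separably rationally connected, hence have a section by the SRC fibration argument, forcing the cover to be trivial — this is the standard fact, cf.\ \cite{kollar}), and more to the point the branches of $D'$ cannot be permuted by monodromy around loops in $\uY'$ because each branch extends to a global divisor on a neighborhood. So $\ocM_{Y'}^{gp}$ is a constant sheaf, and therefore so is $\ocM_Y^{gp}=(\bar\pi^\flat)^{-1,gp}\ocM^{gp}_{Y'}$.

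Next, since $\ocM_Y$ is a sheaf of \emph{sharp} fine saturated monoids and $\ocM_Y\into\ocM_Y^{gp}$, the monodromy action of $\pi_1(\uY,x)$ on $\ocM_{Y,x}^{gp}$ fixing no auxiliary data must restrict to the action on $\ocM_{Y,x}\subset\ocM_{Y,x}^{gp}$; a monodromy acting trivially on $\ocM_Y^{gp}$ acts trivially on the subsheaf $\ocM_Y$, so $\ocM_Y$ is itself a globally constant sheaf, isomorphic to $P_Y$ with $P:=\ocM_{Y,x}=\ocM_{X,x}$. It remains to produce the distinguished chart: I would choose $x\in\uY$, take a chart $P=\ocM_{X,x}\to\cM_X$ in an étale (or, if the log structure is Zariski, a Zariski) neighborhood $U$ of $x$ lifting the identity on stalks at $x$ — such a chart exists by the standard local structure of log smooth/fine saturated log schemes — and restrict it to $U\cap Y$. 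By the just-proved constancy of $\ocM_Y$, the composite $P\to\cM_X|_{U\cap Y}\to\ocM_Y$ is the (constant) identification on every stalk, hence agrees globally on $U\cap Y$; and since $\ocM_Y$ is constant, this global section $P\to\ocM_Y$ is defined on all of $\uY$ and locally lifts to a chart of $\cM_X|_Y$ (étale-locally use the given local charts of $\cM_X$ and compare with $\beta$ on stalks, exactly as in the deleted Lemma \ref{lem:DF-cri}). By construction $P\to\cM_X|_Y\to\ocM_{X,x}$ is the identity, so the chart is distinguished.

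The main obstacle, and the step deserving the most care, is the simple-connectedness/monodromy-triviality input: one must argue cleanly that separable rational connectedness of $\uY$ forces the locally constant sheaf $\ocM_Y$ to have trivial monodromy. The honest way is the reduction via Lemma \ref{lem:center-resoltuion} to the SNC case, where the monodromy is visibly a permutation of boundary branches, combined with the fact that an SRC variety admits no connected nontrivial finite étale cover (each branch of $D'$ near $\uY'$ that is permuted by monodromy would force such a cover); alternatively one invokes $\pi_1^{\text{ét}}$-triviality of SRC varieties directly. I would present the SNC reduction as the primary route since it is self-contained given the lemmas already in the paper, and only cite the $\pi_1$-triviality fact for SRC varieties as the one external ingredient.
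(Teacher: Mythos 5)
Your proposal is correct and follows essentially the same route as the paper's proof: pass to a log \'etale resolution with simple normal crossings boundary via Lemma \ref{lem:center-resoltuion}, note that $\ocM_{Y'}$ is globally constant there, use that a proper separably rationally connected variety admits no nontrivial finite \'etale cover (the paper cites \cite[Corollary 3.6]{Debarre}) to identify $\uY'$ with $\uY$, and conclude constancy of $\ocM_Y$ from the inclusion $\ocM_Y\subset\ocM_Y^{gp}$. Your extra paragraph constructing the distinguished chart explicitly is a harmless elaboration of what the paper leaves implicit; the only small slip is the claim that smoothness of $\uY$ implies connectedness, which is instead part of the definition of a (connected) center.
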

\begin{proof}
Let $X' \to X$ be a birational log \'etale morphism such that $X'$ has simple normal crossings boundary. Such resolution exists over algebraically closed \R{fields} of arbitrary \R{characteristic}, \R{see \cite[Theorem 5.10]{Ni} or \cite[Corollary 4.5.4]{log-bound}}. Let $Y' \subset X'$ be a center over $Y$. Since $X'$ has simple normal crossings boundary, the sheaf of free monoids $\ocM_{Y'} \cong \NN^k$ is a globally constant sheaf of monoids for some positive integer $k$.

By \cite[Corollary 3.6]{Debarre}, the separably rationally connectedness of $\uY'$ implies the underlying morphism $\uY' \to \uY$ is an isomorphism. Since $\ocM_{Y'}$ is a constant sheaf of \R{monoids}, by Lemma \ref{lem:center-resolution}, the sheaf of groups $\ocM_{Y}^{gp}$ is a constant sheaf of monoids. Then the canonical inclusion $\ocM_{Y} \subset \ocM_{Y}^{gp}$ implies that $\ocM_{Y}$ is globally constant sheaf of monoids. 
\end{proof}

\begin{comment}
Let $X$ be a log smooth variety, and $Y \subset X$ be a DF-type center of $X$. We fix a global morphism \Yi{This paragraph needs to be emphasized, use def or notations, and it is better to introduce the set of contact orders of $A^1$-classes here}
\[
P \to \ocM_{Y}
\]
as in Definition \ref{def:DF-center}. For each element $\delta \in P$ consider the morphism of monoids:
\[
\iota: \NN \to P, \ \ \ 1 \mapsto \delta.
\]
Let $\cM_{\delta} := g^{-1}(\NN\cdot \delta) \subset \cM_{Y}$, where $g$ is the quotient $g: \cM_{Y} \to \ocM_{Y}$. One checks that $\cM_{\delta}$ is again a log structure over $\uY$. In particular, when $\delta = 0$ in $P$, $\cM_{\delta} = \cO^*_{Y}$ is the trivial one. Consider $g^{-1}(\delta) \subset \cM_{\delta}$. This is a $\cO^*_{X}$-torsor. Denote by $L_{\delta}$ the line bundle associated to the torsor $g^{-1}(\delta)$.  

For each effective curve $\uC \subset \uY$, we define a morphism of monoids
\begin{equation}\label{equ:curve-ev}
\cV(\uC): P \to \ZZ, \ \ \ \delta \mapsto \deg L^{\vee}_{\delta}|_{C}.
\end{equation}
Since $\ZZ$ is a group, $\cV(\uC)$ can be viewed as an element of $(P^{gp})^{\vee}$. 

\end{comment}

%%%%%%%%%%%%%%%%%%%%%%

\subsection{Admissible curve classes in Deligne-Faltings centers}\label{ss:curves-on-DF}
%\R{
%A necessary condition for lifting an underlying stable map to a stable log map is that the curve class needs to intersect the boundary of the log variety non-negatively. Such curve class are the {\em admissible} ones in the logarithmic sense, and will serve as the building elements in the comb construction in Section \ref{sec:comb}.  Our next goal is to describe these log admissible curve classes in Deligne-Faltings centers. \Qile{Move this to the introduction?}
%}

\begin{proposition}\label{prop:DF-line-bundle}
Let $X$ be a log scheme, and $\gamma: P \to \ocM_{X}$ be a morphism from a globally constant sheaf of monoids $P$.  Then there exists a natural map of monoids
\begin{equation}\label{equ:DF-line-bundle}
L_{\gamma}: P \to \pic(\uX)
\end{equation}
given by 
$
\delta  \to (q^{-1}(\gamma(\delta))^{\vee},
$
where $q: \cM_{X} \to \ocM_{X}$ is the quotient morphism. 
\end{proposition}
\begin{proof}
Note that $(q^{-1}(\gamma(\delta))$, hence its dual is an $\cO_{\uX}^*$-torsor, which corresponds to a unique element in $\pic(\uX)$. 
\end{proof}

\begin{definition}\label{def:log-one-cycle}
Let $Y$ be a Deligne-Faltings center, and 
$
\beta: P := \Gamma(\uY, \ocM_{Y}) \to \ocM_{Y}
$ 
be the distinguished chart.  The \emph{total contact order} of a curve class $F \in N_1(\uY)$ is the element $\co(F)\in (P^{gp})^\vee$ defined by 
\begin{equation}\label{equ:curve-co}
\co(F)(\delta) = c_1(L_{\beta}(\delta)) \cdot F, \ \ \ \mbox{for any }\delta \in P^{gp}
\end{equation}
where $c_1(L_{\beta}(\delta))$ is the first chern class of $L_{\beta}$. A curve class $F$ is \emph{log admissible} if $\co(F) \in P^\vee$. 
\end{definition}

%{\red \Qile{Do we need $\NE(Y)$?}
%Equation (\ref{equ:curve-co}) yields a morphism:
%\begin{equation}\label{equ:contact-order}
%\co: \NE(\uY) \to (P^{gp})^{\vee}.
%\end{equation}
%}

\R{

\begin{example}\label{ex:SNC}
We given an example of total contact orders in the case of simple normal crossings boundaries.  Let $\uX$ be a smooth variety, and $\uD = \cup_{i =1}^{n} \uD_i \subset \uX$ be a simple normal crossings divisor, where $\uD_i$ is the irreducible component of $\uD$ for each $i$. Denote by $X = (\uX, \cM_X)$ the log scheme associated to the pair $(\uX, \uD)$, see Example \ref{ex:toroidal}. Assume $X$ has a unique center $Y$ with $\uY = \cap_{i=1}^{n} \uD_i$.

By \cite[Complement 1]{KKato} and \cite[A.2]{Chen}, the characteristic sheaf $\cM_{X}$ admits a morphism 
%\begin{equation}\label{equ:SNC-chart}
$\beta_X: P = \NN^n \to \ocM_{X}$
%\end{equation}
such that $\beta_X$ locally lifts to a chart of $\cM_X$. We give a local description of $\beta_X$ as follows.

Let $\delta_1, \cdots, \delta_n$ be a basis of $P$. For each point $x \in X$, let $s_i = 0$ be the local equation of $\uD_i$ around $x$ for each $i$. Then we may view $s_i$ as a local section of $\cM_X$. Denote by $\bar{s}_i$ the image of $s_i$ in $\ocM_{X}$. Then locally around $x$, the morphism $\beta_X$ is given by 
\begin{equation}\label{equ:SNC-char-local}
\beta_X(\delta_i) = \bar{s}_i, \ \ \ \mbox{for each }i.
\end{equation}
In this case, the induced morphism
$
\beta_Y := (\beta_X)|_Y: P \to \ocM_Y
$
is the distinguished chart. Further notice that $L_{\beta_Y}(\delta_i) = \cO_{\uX}(D_i)|_{\uY}$. Thus an effective curve $F$ in $\uY$ is log admissible if and only if 
$
\co(F)(\delta_i) = [F] \cdot [\uD_i] \geq 0 \ \  \mbox{for all } i.
$
\end{example}
 }

\begin{lemma}\label{lem:curve-in-center}

Suppose $Y$ is a Deligne-Faltings center of a log smooth variety $X$. Let $\pi: X' \to X$ be any proper, birational, log \'etale morphism, and $Y' \subset X'$ be a center over $Y$. \R{Then $Y'$ is also of Deligne-Faltings type with $\uY' = \uY$. } Consider the distinguished charts $\beta: P:= \Gamma(\uY, \ocM_{Y}) \to \ocM_{Y}$ and $\beta': Q := \Gamma(\uY', \ocM_{Y'}) \to \ocM_{Y'}$. For any effective curve $\uC \subset  \uY$, we have 
$
\co_{Y}([\uC]) = \co_{Y'}([\uC])
$ 
in $(Q^{gp})^{\vee} = (P^{gp})^{\vee}$, \R{where $\co_{Y}$ and $\co_{Y'}$ are the total contact orders associated to $Y$ and $Y'$ respectively, see (\ref{equ:curve-co}).}

%Suppose $Y$ is a center of a log smooth variety $X$. Let $\pi: X' \to X$ be any birational log \'etale morphism, and $Y' \subset X'$ be a center over $Y$. Assume \R{$Y$ and $Y'$ are both of Deligne-Faltings type, hence by Lemma \ref{lem:dis-chart-DFcenter}} there are two distinguished charts $\beta: P:= \Gamma(\uY, \ocM_{Y}) \to \ocM_{Y}$ and $\beta': Q := \Gamma(\uY', \ocM_{Y'}) \to \ocM_{Y'}$. For any effective rational curve $\uf: \uC\cong \PP^1 \to \uY$, let $\uf': \uC' \cong \PP^1 \to \uY'$ be a lift of $\uf$. Then we have 
%$
%\co_{Y}([\uC]) = \co_{Y'}([\uC'])
%$ 
%in $(Q^{gp})^{\vee} = (P^{gp})^{\vee}$, \R{where $\co_{Y}$ and $\co_{Y'}$ are the total contact orders associated to $Y$ and $Y'$ respectively, see (\ref{equ:curve-co}).}

\end{lemma}
\begin{proof}
Consider the following commutative diagram over $\uY'$:
\begin{center}
\begin{equation}
\xymatrix{
\pi^*\cM_{Y} \ar[r] \ar[d] & \cM_{Y'} \ar[d] \\
\pi^*\ocM_{Y} \ar[r] & \ocM_{Y'} \\
P \ar[u]^{\cong} \ar[r] & Q \ar[u]^{\cong}.
}
\end{equation}
\end{center}
We want to determine $c_1(L_{\beta}(\delta))\cdot[\uC]$ for each $\delta \in Q$. \R{By Lemma \ref{lem:center-resolution}(1)}, the bottom arrow induces an isomorphism $P^{gp} \cong Q^{gp}$. The statement then follows.
\end{proof}

\begin{comment}
Let $Y$ be a Deligne-Faltings type center of a log smooth variety $X$ with the distinguished chart $\beta: P := \Gamma(\uY, \ocM_Y) \to \ocM_{Y}$. Consider the subset $
\fC(Y) \subset P^{\vee}$ consisting of $0 \in P^{\vee}$ and $\co_{Y}(\uC)$ for any irreducible free rational curve $\uC \to \uY$. Since $\uY$ is a center, it is smooth in the usual sense.  \R{Thus we may deform any two free rational curves in $\uY$ such that they are free and pass thought a general point. They can be glued at a common intersection point, and can be then smoothed out to another irreducible free rational curve.} \Qile{3.47 addressed}This implies that 

\begin{lemma}\label{lem:add-center-curve}
Under the above assumptions, the subset $\fC(Y)$ is closed under addition.
\end{lemma}

\begin{definition}\label{def:center-rat-cone}
We call $\fC(Y)$ the {\em cone of $\AA^1$-contact orders of free rational curves supported on $\uY$}. \end{definition}
\end{comment}

The following definition is crucial in our construction of $\A^1$-curves.

\begin{definition}\label{def:full-free}
%Let $\beta: P \to \ocM_{Y}$ be a global chart. 
\begin{comment}
{\red
A Deligne-Faltings type center $Y \subset X$ is called {\em fully free} if $\fC(Y)$ spans the \R{$k$-}vector space $P^{\vee}\otimes_{\ZZ}k$ where $P = \Gamma(\uY,\ocM_Y)$. A fully free center $Y$ is called {\em primitive} if $\fC(Y)$ spans the lattice $(P^{gp})^{\vee}$. 
}
\end{comment}
\R{A Deligne-Faltings type center $Y \subset X$ is called {\em fully free (resp. primitively free)} if there exists free rational curves $\uR_1,\cdots,\uR_l$ on $\uY$ such that their total contact orders lie in $P^\vee$, and span the \R{$\kk$-}vector space $(P^{gp})^{\vee}\otimes_{\ZZ}\kk$ (resp. the lattice $(P^{gp})^{\vee}$).  }
\end{definition}

\R{
\begin{remark}
Considering the $\kk$-vector space spanned by the total contact orders will be crucial in the construction of very free $\AA^1$-curves for base fields of arbitrary characteristic, see Lemma \ref{lem:attach-curve-key}.
\end{remark}
}

%%%%%%%%%%%%%%%%%%%
\R{
\subsection{Examples of simple normal crossings boundaries}
We next provide some interesting examples of fully free centers.

\begin{lemma}\label{lem:TZ}
	Let $\uZ$ be a proper, smooth, separably rationally connected variety. There exist very free rational curves $\uR_0, \uR_1, \cdots, \uR_l$ on $\uZ$ whose curve classes span the $\ZZ$-lattice $N_1(\uZ)$. 
\end{lemma}

\begin{proof}By \cite[Theorem 1.3]{Tian-Zong}, $\CH_1(\uZ)$ is generated by rational curves. In particular, there exists rational curves $\uR'_1, \cdots, \uR'_{l}$ on $Z$ whose curve classes span $N_1(\uZ)$. Now we pick a very free curve $\uR_0$, expressed as a linear combination in $N_1(\uZ)$:
\[
[\uR_1]= a_1[\uR'_1]+\cdots+a_l[\uR'_{l}].
\]

%Without loss of generality, we may assume that $a_1\neq 0$. \Qile{where do we need $a_1 \neq 0$?}

For any $i= 1, \cdots, l$, let $N_i$ be a positve integer such that the class $N_i[\uR_1]+ [\uR'_{i}]$ can be represented by a very free rational curve $\uR_i$. The existence of such very free rational curves follows from \cite[Theorem II.7.9, Proposition II.7.10]{kollar}. Now rational curves $\uR_0,\cdots, \uR_l$ are very free, whose curve classes span  $N_1(\uZ)$.
\end{proof}

\begin{proposition}\label{prop:new-fully}
Suppose $\ch\kk =0$. Let $\uX$ be a smooth projective variety, and $\uD=\uD_1+\cdots+\uD_r$ be a simple normal crossings divisor on $\uX$. Let $X$ be the log smooth variety associated to the pair $(\uX,\uD)$. Assume that 
\begin{enumerate}
 \item $\uD_i$ is irreducible and ample for all $i$;
 \item  the numerical equivalence classes of $\uD_1,\cdots,\uD_r$ are linearly independent in the N\'eron-Severi group $N^{1}(\uX)_{\QQ}$;
		%\item $\uD_1,\cdots,\uD_r$ are linearly independent in $\pic(\uX)_\Q$;
		\item the center $\uY:= \cap_{i=1}^r \uD_i$ is rationally connected of dimension $\ge 2$.
	\end{enumerate}
Then $\uY$ is a fully free center.
\end{proposition}

\begin{proof}
Since $\uY$ is rationally connected, by Lemma \ref{lem:TZ} we have very free rational curves $R_0, \cdots, R_l$ on $\uY$ whose curve classes span $N_1(\uY)$.  Let $L_i$ be the pullback of the Cartier divisor $\O_{\uX}(\uD_i)$ to $\uY$. By Example \ref{ex:SNC}, the total contact order of $\uR_i$ is given by the vector of intersection numbers
\[
(\uD_1\cdot \uR_i,\cdots, \uD_r\cdot\uR_i).
\] 
The center is fully free if and only if the matrix $M= (\uD_j \cdot \uR_i)$ satisfies:
	\begin{enumerate}
		\item every entry of $M$ is nonnegative;
		\item the rank of $M$ is $r$.
	\end{enumerate}

Part (1) follows from the ampleness of $\uD_i$. By Lefschetz hyperplane theorem, the Cartier divisors $L_1, \cdots, L_r$ are linearly independent in $N^{1}(\uY)_{\QQ}$. Part (2) then follows from the fact that $\uR_0, \cdots, \uR_l$ span $N_1(\uY)$.
\end{proof}
}

%%%%%%%%%%%%

%%%%%%%%%%%%%%%%%%%%%%%%%%%%%%%%%%%%%%%%%%Comb construction

\section{Log comb construction}\label{sec:comb}

%%%%%%%%%%%%Introduction to Comb construction
\R{
We next introduce a comb construction in the log setting. Unlike the situation in \cite{CZ}, the underlying stable map of the comb under consideration will be contained in a center. In general, a stable map can not be lifted to a stable log map.  Section \ref{ss:underlying-comb}, \ref{ss:mon-obs}, and \ref{ss:lift-to-log} is devoted to show that the configuration of stable maps in our situation can be lifted. We analyze the deformation of such stable log map in Section \ref{ss:comb-def}, and then prove Theorem \ref{thm:comb-smoothing} by constructing an unobstructed stable log map whose general smoothing produces a very free $\AA^1$-curve.

%In general, an underlying stable map may not be lifted to a stable log map. To show that the logarithmic lift exists in our situation, we first prove in Section \ref{ss:mon-obs} that the lift exists on the level of characteristic monoids. Once the morphism of characteristic monoids is fixed, we prove in Section \ref{ss:lift-to-log} that such underlying stable map can be lifted to a stable log map, called a {\em logarithmic comb}. We analyze the deformation of such logarithmic combs in Section \ref{ss:comb-def}. Finally, we prove Theorem \ref{thm:comb-smoothing} by first reducing general situation to the case of simple normal crossings boundaries, then constructing an unobstructed logarithmic comb whose general smoothing produces a very free $\AA^1$-curve.
}

%%%%%%%%%%%%%%%%%%%%%%%%%%%%%Set-up
\subsection{The underlying comb}\label{ss:underlying-comb}
We fix some notations that will be used throughout this section.  Let $X$ be a log smooth scheme with simple normal crossing boundary $\uD$ given by smooth irreducible components $\uD_{1},\cdots, \uD_k$. For any $\lambda \subset [k] := \{1,\cdots, k\}$, denote by $\uD_{\lambda} = \cap_{i \in \lambda} \uD_{i}$.  Let $D, D_i,$ and $D_{\lambda}$ be the corresponding log schemes with the log structure restricted from $X$.

\begin{definition}\label{def:underlying-comb}
Assume $D_{\lambda} \subset X$ is a center. Write $n := |\lambda|$. A usual stable map $\uf: \uC \to \uD_{\lambda}$ is called a \R{{\em pre-log comb}} if
\begin{enumerate1}
 \item $\uC =\uC_0 \cup \PP_1 \cup \cdots \cup \PP_m$ is a prestable curve with a unique marking $q_\infty \in \uC_0$.

 \item $\uC_0$ is a smooth irreducible curve of genus $g$, and $\PP_i$ is an irreducible rational curve attached to $\uC_0$ at a unique node $r_i$.

 \item $c_{ij}:= \uf_*[\PP_i] \cdot \uD_j \geq 0$ for each $i = 1,\cdots, k$ and $j \in \lambda$. Write $\vec{c}_{i} = (c_{i1}, \cdots, c_{in})$.

 \item $c_{\infty j} := \uf_*[\uC] \cdot \uD_j \geq 0$ for all $j \in \lambda$. Write $\vec{c}_{\infty} = (c_{\infty 1},\cdots, c_{\infty n})$.
\end{enumerate1}
\R{
A {\em log comb} in $D_{\lambda}$ is a stable log map whose underlying map is a pre-log comb.
}
\end{definition}

\begin{remark}
For future applications, we include the case with higher genus handle $\uC_0$. This will not complicate our discussion below. 
\end{remark}

%%%%%%%%%%%%%%%%%%%%%%%%%%%%%Combinatorial obstruction
\subsection{Combinatorial obstructions}\label{ss:mon-obs}

Our next goal is to understand when a pre-log comb $\uf$ can be \R{lifted to} a stable log map to $D_{\lambda}$, hence to $X$. The first obstruction to have such lifting is on the level of characteristic monoids, which is known as admissibility in the rank one case \cite[Definition 3.3.6]{Chen}. 

Denote by $P := \oplus_{j \in \lambda} \NN_j$ \R{with $\NN_j = <\delta_j> \cong \NN$}. Since \R{the boundary of} $X$ has simple normal crossings, the log structure $\cM_{X}$ is of Deligne-Faltings type, \R{and so is $\cM_{\lambda}$}. By Lemma \ref{lem:dis-chart-DFcenter}, there is a distinguished chart
\begin{equation}\label{equ:SNC-DF}
\beta: P \to \ocM_{\lambda}.
\end{equation}
We further assume that each copy of $\NN_j$ corresponds to the log structure from $D_j$, see Example \ref{ex:SNC}. Write \R{$\co_i := \co(\PP_i) \in (P^{gp})^{\vee}$} for each $i$, see (\ref{equ:curve-co}).  Consider $\{\delta_1,\cdots, \delta_n\}$ the set of standard generators of $P$. Denote by
\begin{equation}\label{equ:intersection}
c_{ij} = \co_i(\delta_j).
\end{equation}

\begin{notation}
To proceed further, we introduce the following:
 \begin{enumerate1}
 \item For each $\PP_i$ \R{(resp. $\uC_0$)} we introduce $P_i \cong P$ \R{(resp. $P_0 \cong P$)} with the set of standard generators 
 \[
 \{e_{i1}, \cdots, e_{in}\}  \ \ \ \mbox{for }  i=0,1,\cdots, m.
 \] 
 Here $e_{ij}$ is the vertex element corresponding to $D_j$ as in \cite[4.1.2]{AC}.

 \item For each node $r_i$ we introduce $N_i = \langle l_i \rangle \cong \NN$. Here $l_i$ is the edge element of the node $r_i$ as in \cite[4.1.2]{AC}.

 \item For each $i \in \{1,\cdots, m\}$, we introduce a morphism of monoids \Yi{explain P0, 3.54}\Qile{Add an explanation above.}
\[\phi_i := id\oplus \co_i : P_{0} \to P_{i}\oplus N_i \] 
given by 
$\phi_i(\delta) = \delta + \co_i(\delta)\cdot l_i $
for $\delta \in P_0 \cong P_i$. This is given by the edge equation in \cite[4.1.2]{AC}.
 \end{enumerate1}
\end{notation}

Note that the sets of monoids $\{P_0, P_1\oplus N_1,\cdots, P_{m}\oplus N_m\}$ and morphisms $\{\phi_i\}$ form a finite system in the category of monoids, denoted by $\Phi$. 

\R{
Denote by $\Phi^{gp}$ the groupification of the the finite system of $\Phi$ consisting of group homomorphisms $\{\phi_{i}^{gp}: P^{gp}_{0} \to P_{i}^{gp}\oplus \NN^{gp}_{i}\}$. Denote by $G = \indlim \Phi^{gp}$ the colimit of $\Phi^{gp}$. By the definition of colimits, we have the natural morphism 
\[
P_0 \oplus \sum_{i=1}^{m}(P_{i}\oplus N_i) \to G.
\]
Denote by $\ocN$ the saturated submonoid in $\ocM^{gp}$ generated by the image of the above morphism, and by $\ocN^* \subset \ocN$ the maximal subgroup of $\ocN$. Let $\ocM = \ocN / \ocN^*$. Using the universal property of colimits, we observe that
}
\begin{equation}\label{equ:min-monoid}
\ocM := \indlim \Phi
\end{equation}
where the colimit is taken in the category of fine, saturated and sharp monoids. %Such limit exists as finite colimits exist in the category of finitely generated monoids \cite{Ogus} \Yi{(1.29):find a reliable reference}. 
 Indeed, one checks that $\ocM$ is the minimal monoids constructed in \cite[4.1.2]{AC}. 

\R{
\begin{remark}
By \cite[Lemma 3.3(2)]{irr-rat}, there is a natural splitting $\ocM^{gp} = P^{gp}\oplus \sum_{i=1}^{m}\ZZ$. However, the monoid $\ocM$ is in general not the direct sum $P \oplus \sum_{i=1}^{m}\N$. \Yi{3.55: Qile: The referee is wrong about this.} The colimit description as above is helpful for proving the properties in Lemma \ref{lem:graph-obs} without calculating the precise formation of $\ocM$. 
\end{remark}
}

By the formulation, we have the natural maps
\[
\chi_i: P_i \to \ocM, \ \ \mbox{and} \ \  \Theta_i: N_i \to \ocM.
\]

\begin{lemma}\label{lem:graph-obs}
Both $\chi_i^{-1}(0)$ and $\Theta_i^{-1}(0)$ are the trivial monoid for any $i$. %\Qile{This is the combinatorics obstruction I forgot to verify before.}
\end{lemma}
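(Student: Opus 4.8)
The plan is to show that the colimit monoid $\ocM$ does not collapse the generators, i.e., the natural maps from the vertex monoids $P_i$ and edge monoids $N_i$ remain injective away from zero. Since $\ocM = \indlim \Phi$ is a pushout-type construction over the system $\Phi$, I would first give an explicit construction of a monoid $M$ that receives compatible maps from all $P_i \oplus N_i$ and from $P_0$, and then use the universal property of the colimit to factor the canonical maps $\chi_i, \Theta_i$ through $M$. Concretely, $M$ should be built so that it separates the ``vertex'' contributions and ``edge'' contributions transparently — for instance, take $M \subset (P^{gp}) \oplus \bigoplus_{i=1}^{m} N_i^{gp}$ to be the saturation of the submonoid generated by the images of $\chi_i, \Theta_i$, or more simply realize $M$ as the monoid of integral points of a suitable rational polyhedral cone whose facets are dictated by the edge equations $\phi_i(\delta) = \delta + \co_i(\delta)\cdot l_i$. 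Because each $\co_i \in P^\vee$ (it comes from $\co_\beta$ applied to an effective curve $\PP_i$, whose contact orders $c_{ij} \geq 0$ by Construction \ref{con:underlying-comb}(3)), these edge equations are compatible with a global grading.

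The cleanest route: observe that the tropical/combinatorial picture of a comb is exactly the universal tropical curve of this type, whose moduli is the image of a map into $P^\vee$-land. I would exhibit the ``characteristic-monoid realization'': let $M$ be the submonoid of $\mathrm{Hom}(P, \NN) \times \NN^{m}$ — thought of as pairs (a global section on the handle, plus slopes along each teeth edge) — subject only to the positivity constraints, and check directly that (i) $M$ is fine, saturated, sharp, (ii) $M$ receives the required cocone, and (iii) under the induced map $\ocM \to M$, the images of the standard generators $e_{ij}$ of $P_i$ and the generators $l_i$ of $N_i$ are all nonzero. Step (iii) is where the real content lies: I would check that the image of $e_{ij}$ is a nonzero element of $M$ by pairing against an appropriate linear functional (e.g. the coordinate extracting the $j$-th boundary multiplicity at the relevant component), and similarly the image of $l_i$ is detected by the $i$-th edge-length coordinate. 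Since a nonzero element of $\ocM$ maps to a nonzero element of $M$, its preimage $\chi_i^{-1}(0)$ and $\Theta_i^{-1}(0)$ can only be $\{0\}$.

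The main obstacle I anticipate is making the colimit in the category of \emph{fine saturated sharp} monoids explicit enough to control which elements die. Colimits in that category are not just colimits of the underlying commutative monoids — one must first take the colimit in commutative monoids, then quotient by the subgroup of torsion/units and saturate, and either of those last two steps could in principle kill a generator. So the crux is verifying that the commutative-monoid pushout is already sharp and torsion-free in the relevant generators, which is precisely what the target monoid $M$ (being a submonoid of a lattice, hence automatically integral, saturated, and sharp once we check it has no units) certifies. Alternatively, I could cite the identification ``$\ocM$ is the same as the minimal monoid of \cite[4.1.2]{AC}'' already noted in the text, and extract from the explicit description there — the minimal monoid is built as a quotient of $\bigoplus P_i \oplus \bigoplus N_i$ by the edge relations — that the composite $P_i \into \bigoplus P_j \oplus \bigoplus N_j \twoheadrightarrow \ocM$ is injective because the edge relations only identify elements of $P_0$ with elements of $P_i \oplus N_i$ without introducing cancellation among the $e_{ij}$ themselves; the key point being that the ``dual cone'' $P^\vee$ membership of each $\co_i$ guarantees the relations are ``triangular'' with respect to the obvious $\NN$-grading by total degree on the handle, so no nontrivial element can be equated to $0$. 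I would present whichever of these is shorter, most likely the explicit lattice-embedding argument, since it also makes statements (i)--(iii) simultaneous.
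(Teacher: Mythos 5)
Your plan — use the universal property of the colimit by exhibiting a compatible cocone valued in a sharp, integral monoid and then detect the generators $e_{ij}$ and $l_i$ by pairing against linear functionals — is exactly the paper's argument, which realizes it in the simplest possible form: one cocone $\{g_j\}$ valued in $P$ (projections away from the $N_j$ factors, detecting the $e_{ij}$) and, for each fixed $i$, one cocone $\{h_j\}$ valued in $\NN$ built from the contact orders $\co_j\in P^\vee$ so that the composite $N_i\to\ocM\to\NN$ is the identity. The only content is checking compatibility with the edge maps $\phi_j$, which uses precisely the nonnegativity $c_{ij}\ge 0$ you flagged, so your approach is correct and essentially identical to the paper's.
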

\begin{proof}
Consider the set of morphisms
\[
g_0 = id_P : P_0 \to P \ \ \mbox{and} \ \ g_i := id\oplus 0: P_{i}\oplus N_i \to P
\]
where $g_i$ is the projection to its first factor for $i \neq 0$. Thus the set $\{g_i\}_{i=0}^{m}$ induces a morphism from the system $\Phi$ to $P$. Since $P$ is a fine, saturated and \R{sharp monoid}, we obtain a morphism
$
g: \ocM \to P.
$
By the choice of $g_i$, for any $\delta \in P_i$ we have $g_i(\delta) = 0$ if and only if $\delta = 0$ in $P_i$. This implies that $\chi_i^{-1}(0)$ \R{is trivial} for any $i$.\Yi{incomplete 3.58: Qile: fixed!}

For each $i \neq 0$, we introduce a set of morphisms \R{
\[
h_{i0} := \co_i: P_0 \to \NN, \ \ \ \ h_{ii} := 0 \oplus id: P_i \oplus N_i \to \NN
\]
and
\[
h_{ij} := \co_i \oplus 0: P_j \oplus N_j \to \NN, \ \ \ \mbox{for $j\neq i$}.
\]
}
\R{Since $P_0 = P_j = P$, the above morphisms are well-defined.  Note that the set of morphisms $\{h_{ij}\}_{j=0}^{m}$ induces a morphism from the system $\Phi$ to $\NN$,} hence a morphism $\ocM \to \NN$. Furthermore, we check the composition $N_i \to \ocM \to \NN$ is the identity. This proves that $\Theta_i^{-1}(0)$ is trivial \R{for $i = 1, 2, \cdots, m$}.\Yi{3.59: Qile: Problem fixed!}
\end{proof}

\begin{remark}\Yi{expand, 3.60: Qile:expanded!}
If $\uf$ can be lifted to a stable log map $f: C/S \to X$, then the base monoid $\ocM_{S}$ will automatically satisfy the conditions in Lemma \ref{lem:graph-obs}. \R{
In fact, the image of the generator $1 \in N_i$ under $\theta$ corresponds to infinitesimally the smoothing parameter of the node $r_i$. The log lift $f$ has a node $r_i$ if and only if $\theta(1) \neq 0$. 

The image of the element $e_{ij} \in P_i$ under $\chi_i$ is called the {\em degeneracy} of the component $\PP_i$. If the lift $f$ exists, then the image of $\PP_i$ under $\uf$ lying in $D_j$ if and only if $\chi_i(e_{ij}) \neq 0$. 
} 

Such property of $\ocM$ is called {\em admissible} as in \cite[Section 3]{Chen}. The above lemma shows that there is no obstruction on the level of characteristic monoids to lift $\uf$ to a stable log map. 
\end{remark}

For later use, we will identity the elements $l_i$ and $e_{ij}$ with their images $\chi_i(e_{ij})$ and $\Theta_{i}(l_i)$ in $\ocM$ when there is no danger of confusion.

\subsection{Lift to stable log maps}\label{ss:lift-to-log}\Yi{(1.32)}
%We adopt the notations as in the previous section. 
\R{
Consider a pre-logarithmic comb $\uf: \uC \to D_{\lambda}$  as in Definition \ref{def:underlying-comb}.
} 
Let $C^{\sharp} \to S^{\sharp}$ be the log curve with the canonical log structure associated to the underlying pre-stable curve $(\uC, q_{\infty})$. We may fix a chart 
\[
\beta_{\sharp}: P_{\sharp} := \oplus_{i=1}^{m} N_i \to \cM_{S^{\sharp}}.
\]

Using the canonical map $\oplus_{i}\Theta_i: P_{\sharp} \to \ocM$ and the chart $\beta_{\sharp}$, we form a new log structure over $\uS := \uS^{\sharp}$ by
$
\cM := \ocM \oplus_{P_{\sharp}} \cM_{S^{\sharp}}.
$
Note that the inclusion 
$
\beta: \ocM \to \cM
$
defines a chart of $\cM$. Denote by $S = (\uS, \cM)$. The morphism $\cM_{S^{\sharp}} \to \cM$ defines a morphism of log schemes
$
S \to S^{\sharp}.
$
Denote by 
\[C := (\uC, \cM_{C}) = C^{\sharp} \times_{S^{\sharp}}S\] 
where the fiber product is taken in the category of log schemes. We thus obtain the log curve $\pi: C \to S$ over the underlying prestable curve $(\uC, q_{\infty})$. 

We now need to construct a log map $f: C/S \to X$ over $\uf$. This amounts to construct the map of log structures:
$
f^{\flat}: \uf^*\cM_{X} \to \cM_{C}.
$

Consider the standard basis $\{\delta_{j}\}$ of $P$ where $\delta_j$ corresponds to the component $D_j$. For convenience, we may identify $\delta_j$ with its image in $\uf^*\ocM_{X}$ when there is no danger of confusion. Denote by $\sigma$ the local coordinate near $q_{\infty}$, and \R{let} $\log \sigma$ be the corresponding image in $\cM_{C}$. For each node $r_i$, denote by $x_i$ and $y_i$ the two coordinates around $r_i$ on $\uC_0$ and $\PP_i$ respectively. Let $\log x_i$ and $\log y_i$ be the corresponding images in $\cM_{C}$. Choosing the coordinates properly, we may assume that 
\begin{equation}
\log x_i + \log y_i = l_i
\end{equation}
where $l_i$ is identified with its image  $\beta(l_i)$ in $\cM_C$. \R{Recall $\cM_X$ is given by the simple normal crossings boundary.} 
On the level of characteristic monoids, we have

\begin{lemma}\label{lem:lift-char}
There exists a unique morphism of sheaves of monoids
\[
\bar{f}^{\flat}: \uf^*\ocM_{X} \to \ocM_{C}
\]
determined by
\begin{enumerate}
 \item $\bar{f}^{\flat}(\delta_j) = e_{0j} + c_{\infty j}\cdot \log \sigma$ at the marking $q_{\infty}$;
 \item $\bar{f}^{\flat}(\delta_j) = e_{ij} + c_{ij} \cdot \log y_i$ at the node $r_{i}$.
\end{enumerate}
Here we identify $\log \sigma$ and $\log y_i$ with their corresponding images in $\ocM_{C}$.
\end{lemma}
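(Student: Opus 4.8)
The plan is to construct the map $\bar f^\flat$ on characteristic monoids by working \'etale-locally on the source curve $\uC$, checking that the local prescriptions are forced and that they glue. First I would recall that, because $X$ has simple normal crossings boundary with the distinguished chart $\beta: P = \oplus_{j\in\lambda}\NN_j \to \ocM_\lambda$ lifting locally to a chart of $\cM_\lambda$, the stalk $\uf^*\ocM_X$ at a point $p\in\uC$ is the free monoid on those $\delta_j$ for which $p$ maps into $D_j$. Similarly, $\ocM_C$ is the standard log curve characteristic sheaf (pulled back along $S\to S^\sharp$ and base-changed to $\ocM$): away from the nodes $r_i$ and the marking $q_\infty$ its stalk is $\ocM$ itself; at $q_\infty$ the stalk is $\ocM\oplus\NN\langle\log\sigma\rangle$; at a node $r_i$ it is $\ocM\oplus_{\NN\langle l_i\rangle}(\NN\langle\log x_i\rangle\oplus\NN\langle\log y_i\rangle)$, with the gluing $\log x_i+\log y_i = l_i$. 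So the content of the lemma is: (i) the formulas in (1) and (2) land in these stalks, (ii) they agree on overlaps with a well-defined global formula on the complement of $\{q_\infty, r_1,\dots,r_m\}$, and (iii) any morphism of sheaves of monoids $\uf^*\ocM_X\to\ocM_C$ that is to underlie a stable log map lifting $\uf$ must be the one so described.

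The key steps, in order. \emph{Step 1: the generic formula.} On the component $\uC_0$, away from $q_\infty$ and the $r_i$, set $\bar f^\flat(\delta_j) = e_{0j}$, using $\chi_0: P_0\to\ocM$ to regard $e_{0j}\in\ocM$; on the component $\PP_i$, away from $r_i$, set $\bar f^\flat(\delta_j) = e_{ij} = \chi_i(e_{ij})$. Here one uses that $\uf^*\delta_j$ is nonzero in $\ocM_{X}$ precisely along $\uf^{-1}(D_j)$; note $\uf_*[\uC_0]$ meeting $D_j$ or $\uf_*[\PP_i]$ meeting $D_j$ is recorded by $c_{\infty j}$ resp. $c_{ij}$, but over the \emph{generic} point of a component the class $\delta_j$ is simply sent to the corresponding vertex element. \emph{Step 2: compatibility at the marking $q_\infty$.} There the prescription must degenerate by the contact order: $c_{\infty j} = \uf_*[\uC]\cap D_j$ is exactly the multiplicity with which $\uf(\uC_0)$ meets $D_j$ at $q_\infty$ (all other intersection points being on the $\PP_i$), so the local structure of a log map forces $\bar f^\flat(\delta_j) = e_{0j} + c_{\infty j}\cdot\log\sigma$, which specializes to $e_{0j}$ on the punctured neighborhood, matching Step 1. \emph{Step 3: compatibility at a node $r_i$.} On the $\uC_0$-branch near $r_i$ one has the generic value $e_{0j}$; on the $\PP_i$-branch the generic value $e_{ij}$. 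The edge equation $\phi_i = \id\oplus\co_i$ in the definition of $\ocM$ gives precisely $e_{0j} = e_{ij} + \co_i(\delta_j)\cdot l_i = e_{ij} + c_{ij}\cdot l_i$ in $\ocM$ (using \eqref{equ:intersection}). Using $l_i = \log x_i + \log y_i$, the value $e_{ij} + c_{ij}\cdot\log y_i$ at the node restricts to $e_{ij}$ along $\PP_i$ (where $\log x_i$ vanishes, hence $\log y_i = l_i$ survives... one has to be a bit careful: along the $\PP_i$-branch $y_i$ is the coordinate, so $\log y_i\mapsto 0$ there and $\log x_i\mapsto l_i$; the stated formula $e_{ij}+c_{ij}\log y_i$ restricts to $e_{ij}$ on $\PP_i$ and to $e_{ij}+c_{ij}l_i = e_{0j}$ on $\uC_0$), matching Step 1 on both branches. \emph{Step 4: gluing and uniqueness.} Since the prescribed values agree on all overlaps, they patch to a global morphism $\bar f^\flat:\uf^*\ocM_X\to\ocM_C$; uniqueness follows because each stalk map is a map of free monoids pinned down on generators, and the choices are forced by (a) the requirement that the composite $\uf^*\ocM_X\to\ocM_C\to\ocM_{\PP^1}$ on each component be the standard log map recovering the given contact data, and (b) the gluing relations in $\ocM_C$.

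The main obstacle I expect is \emph{Step 3} — making the node computation genuinely canonical rather than a choice. The subtlety is that a priori $\bar f^\flat(\delta_j)$ at $r_i$ could be written as $e_{ij} + a\log x_i + b\log y_i$ with $a,b\geq 0$ and $a - b$ equal to the signed intersection data, so one must argue that one of $a,b$ must be zero and pin down which; this is exactly where the structure of the standard log curve (the node has characteristic $\ocM\oplus_{\NN l_i}(\NN\log x_i\oplus\NN\log y_i)$, and a log map to an SNC pair pulls back $\delta_j$ to a ``monomial in one branch coordinate'' by the local description of log smooth morphisms) and the orientation convention ``$r_i\in\PP_i$ carries the incoming contact, $r_i\in\uC_0$ carries none beyond the propagated class'' must be invoked. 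Once the local rigidity of log maps to SNC targets (as in \cite[Section 3]{Chen} or \cite[4.1]{AC}) is cited, the formulas in (1) and (2) are the only possibility, and compatibility with Steps 1–2 is a direct check using the edge equations $\phi_i$ built into $\ocM$. I would also remark that no positivity or effectivity beyond $c_{ij}, c_{\infty j}\geq 0$ is needed here — that is precisely what Construction \ref{con:underlying-comb} was set up to guarantee — and that Lemma \ref{lem:graph-obs} ensures the target monoid $\ocM$ is admissible so that this $\bar f^\flat$ will indeed lift further to honest log structures in the next step.
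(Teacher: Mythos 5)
Your proposal is correct and follows essentially the same route as the paper: the paper's own (very terse) proof simply says that one checks compatibility over the non-marked smooth locus of $\uC$ and that this compatibility is precisely the minimality of $\ocM$ from (\ref{equ:min-monoid}), which is exactly what your Steps 1--4 carry out in detail via the edge equations $e_{0j}=e_{ij}+c_{ij}\cdot l_i$ and the relation $\log x_i+\log y_i=l_i$. Your resolution of the branch convention in Step 3 is the right one, so no gap remains.
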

\begin{proof}
It suffices to check the compatibility over the non-marked, smooth locus of $\uC$. One may then check that the compatibility is precisely the minimality of the monoid $\ocM$ given by (\ref{equ:min-monoid}).
\end{proof}

Denote by $\psi_{X}: \uf^*\cM_{X} \to \uf^*\ocM_{X}$ and $\psi_{C}: \cM_{C} \to \ocM_{C}$ the quotient morphisms. The inverse images $\cT_{j} := \psi^{-1}(\delta_j)$ and $\fT_j := \psi_{C}^{-1}(\bar{f}^{\flat}(\delta_j))$ \R{form $\cO^*_{C}$-torsors}. We observe that

\begin{lemma}\label{lem:torsor}
To define a log map $f: C/S \to X$ with $\bar{f}^{\flat}$ described as in Lemma \ref{lem:lift-char} is equivalent to have a set of  isomorphisms of torsors\[
\cT_{j} \to \fT_{j}, \ \ \ \mbox{for each }j \in \lambda.
\]
\end{lemma}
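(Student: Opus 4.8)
The plan is to exploit two facts: that the underlying stable map $\uf$ factors through the center $D_\lambda$, so $\uf^*\cM_X=\uf^*\cM_\lambda$; and that $\cM_\lambda$ is of Deligne--Faltings type with the \emph{free} distinguished chart $\beta\colon P=\bigoplus_{j\in\lambda}\NN_j\to\ocM_\lambda$ of (\ref{equ:SNC-DF}), whose structure map sends $P\setminus\{0\}$ to $0\in\cO_{D_\lambda}$ because $D_\lambda\subseteq D_j$ for every $j\in\lambda$. Hence $\uf^*\cM_X$ is, locally on $\uC$, isomorphic to $\cO_C^*\times\NN^{|\lambda|}$, with the $j$-th standard generator a local section of the $\cO_C^*$-torsor $\cT_j=\psi_X^{-1}(\delta_j)$. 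Consequently a morphism of log structures $f^\flat\colon\uf^*\cM_X\to\cM_C$ lying over the prescribed map $\uf^{-1}\cO_X\to\cO_C$ is completely determined by the images of the torsors $\cT_j$ inside $\cM_C$, and conversely can be assembled from any compatible choice of such images. So I would phrase the proof as an identification of the set of such $f^\flat$ inducing the $\bar{f}^{\flat}$ of Lemma \ref{lem:lift-char} with $\prod_{j\in\lambda}\isom(\cT_j,\fT_j)$.

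First I would dispose of the easy direction: for any log morphism $f^\flat$ the commuting square relating $\psi_X$, $\psi_C$ and the induced ghost map forces $f^\flat(\cT_j)\subseteq\psi_C^{-1}(\bar{f}^{\flat}(\delta_j))=\fT_j$, and since $f^\flat$ restricts to $\uf^{-1}\cO_X^*\to\cO_C^*$ on units, $f^\flat|_{\cT_j}$ descends to an $\cO_C^*$-equivariant morphism of $\cO_C^*$-torsors $\cT_j\to\fT_j$, hence an isomorphism. For the converse, given isomorphisms $g_j\colon\cT_j\to\fT_j$ I would build $f^\flat$ by the obvious local recipe: on a patch where $\uf^*\cM_X$ is trivialized by local sections $t_j\in\cT_j$, set $f^\flat\bigl(u\cdot\prod_j t_j^{a_j}\bigr):=\uf^\sharp(u)\cdot\prod_j g_j(t_j)^{a_j}$. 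I would then check that this is independent of the chosen $t_j$ — precisely the $\cO_C^*$-equivariance of the $g_j$ — so that it glues to a global monoid homomorphism $\uf^*\cM_X\to\cM_C$; that it restricts to $\uf^\sharp$ on units; and that it induces the given $\bar{f}^{\flat}$ on characteristic monoids, since $g_j(t_j)\in\fT_j$ lies over $\bar{f}^{\flat}(\delta_j)$ by construction. The two assignments are visibly mutually inverse.

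The one step with genuine content is verifying that this $f^\flat$ is a morphism of log structures, i.e.\ that $\alpha_C\circ f^\flat=\uf^\sharp\circ\uf^*\alpha_X$, and this is where I would be most careful. The key is again that $\uf$ maps into $D_\lambda$: the image $\alpha_X(\cT_j)$ lies in the ideal sheaf of $D_j\supseteq D_\lambda$, which pulls back to the zero map into $\cO_C$, so $\uf^\sharp\circ\uf^*\alpha_X$ annihilates every section of $\uf^*\cM_X$ with non-trivial $P$-component. Dually $\alpha_C$ vanishes on $\fT_j$: on the interior of each component $\bar{f}^{\flat}(\delta_j)$ equals a vertex element $e_{ij}$ of the minimal monoid $\ocM$, which has zero image in $\cO_S$ and hence in $\cO_C$, and near the marking $q_\infty$ and the nodes $r_i$ it differs from such a vertex element only by multiples of the coordinate elements $\log\sigma$, $\log x_i$, $\log y_i$. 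Thus the required identity reads $0=0$ on sections with non-trivial $P$-part and is obvious on units. Apart from Lemma \ref{lem:lift-char} (which in particular guarantees each $\fT_j$ is a well-defined $\cO_C^*$-torsor) and the Deligne--Faltings formalism of Definition \ref{def:DF}, the argument only uses the standard description of pullback log structures, for which I refer to \cite{KKato}.
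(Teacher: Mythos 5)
Your proof is correct, and it supplies in full the routine verification that the paper leaves implicit (the lemma is stated there as an immediate observation, with no written proof). The two points you isolate — that a morphism of log structures over the prescribed $\bar f^{\flat}$ is determined by, and can be assembled from, equivariant maps of the torsors $\cT_j\to\fT_j$, and that compatibility with the structure maps reduces to $0=0$ because $\uf$ factors through $D_\lambda\subseteq D_j$ while $\bar f^{\flat}(\delta_j)$ always contains a nonzero base-monoid factor — are exactly the content of the lemma.
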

\R{
\begin{proof}\Yi{3.61: referee require a further justification -- using $\ocM_X$ is free. Qile:proof expanded!}
Denote by $<\cT_j> \subset \uf^*\cM_X$ the sub-log structure generated by $\cT_j$. Since $\uf^*\ocM_X$ is globally constant of the form $\NN^n$, we have the splitting
\[
\uf^*\cM_X = <\cT_1>\otimes_{\cO^*}\cdots\otimes_{\cO^*}<\cT_n>.
\]
Thus, to define a morphism $f^{\flat}: \uf^*\cM_X \to \cM_C$ is equivalent to define a morphism of log structures $<\cT_1> \to \cM_C$ for each $j \in \lambda$. 

Since we further require $f^{\flat}$ to be compatible with $\bar{f}^{\flat}$ defined in Lemma \ref{lem:lift-char}, the image of $\cT_1$ in $\cM_C$ factors through $\fT_j$ for each $j \in \lambda$. Furthermore, since $f^{\flat}$ is a morphism of monoids whose restriction $f^{\flat}|_{\cO^*}$ is the identity, $f$ is uniquely determined by the induced isomorphism of $\cO^*$-torsors $f^{\flat}|_{\cT_j}: \cT_j \to \fT_j$. This finishes the proof.
\end{proof}
}

\begin{proposition}\label{prop:lift}
There exists a stable log map $f: C/S \to X$ over the \R{pre-log comb} as in Definition \ref{def:underlying-comb} if and only if for each $j \in \lambda$, there is an isomorphism of line bundles
\begin{equation}\label{equ:bundle-iso}
N_{\uD_j}|_{\uC_0} \cong \cO_{\uC_0}(c_{\infty j}\cdot q_{\infty} - \sum_{i=1}^{m}c_{ij}\cdot r_i), \ \ \mbox{over }\uC_0.
\end{equation}
\end{proposition}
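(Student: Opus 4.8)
The plan is to reduce the existence of the stable log map $f\colon C/S\to X$ to the gluing problem for the $\cO^*_C$-torsors $\cT_j$ and $\fT_j$ identified in Lemma~\ref{lem:torsor}, and then to compute these torsors explicitly on each component of $\uC$. By Lemma~\ref{lem:lift-char} and Lemma~\ref{lem:torsor}, once the underlying map $\uf$ of Construction~\ref{con:underlying-comb} and the prescribed $\bar f^\flat$ are fixed, giving $f$ is the same as choosing, for each $j\in\lambda$, an isomorphism of $\cO^*_C$-torsors $\cT_j\xrightarrow{\sim}\fT_j$ on $\uC$. The torsor $\cT_j=\psi_X^{-1}(\delta_j)$ is the $\cO^*$-torsor attached to the line bundle $\uf^*N_{D_j}$ (the conormal-type line bundle $L_\beta(\delta_j)$ pulled back along $\uf$, by Proposition~\ref{prop:DF-line-bundle}), so the real content is to identify the line bundle underlying $\fT_j$ on $\uC$.

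First I would compute $\fT_j$ componentwise. On the handle $\uC_0$, Lemma~\ref{lem:lift-char}(1)--(2) says $\bar f^\flat(\delta_j)=e_{0j}+c_{\infty j}\log\sigma$ near $q_\infty$ and, reading off the edge equation $\phi_i$, the element $e_{0j}$ differs from $e_{ij}$ by $c_{ij}l_i=c_{ij}(\log x_i+\log y_i)$ at the node $r_i$; hence on $\uC_0$ the line bundle underlying $\fT_j$ is the one glued from the trivial bundle with transition data forcing a zero of order $c_{\infty j}$ at $q_\infty$ and poles of order $c_{ij}$ at the $r_i$, i.e. $\cO_{\uC_0}(c_{\infty j}q_\infty-\sum_i c_{ij}r_i)$. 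On each teeth $\PP_i$, the relevant description $\bar f^\flat(\delta_j)=e_{ij}+c_{ij}\log y_i$ shows $\fT_j|_{\PP_i}$ is $\cO_{\PP_i}(c_{ij})\cong\cO_{\PP_i}(\uf_*[\PP_i]\cap D_j)$, which by construction of $\uf$ agrees with $\uf^*N_{D_j}|_{\PP_i}$ automatically. Thus on each teeth the required isomorphism $\cT_j|_{\PP_i}\cong\fT_j|_{\PP_i}$ exists, and on $\uC_0$ the isomorphism $\cT_j|_{\uC_0}=\uf^*N_{D_j}|_{\uC_0}\cong\fT_j|_{\uC_0}$ exists precisely when \eqref{equ:bundle-iso} holds.

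Then I would globalize: since $\uC$ is nodal with $\uC_0$ meeting each $\PP_i$ at the single node $r_i$, a line bundle (or $\cO^*$-torsor) on $\uC$ is determined by its restrictions to the components together with gluing data at the nodes, and an isomorphism of torsors on $\uC$ likewise patches from isomorphisms on the components compatible at the nodes. The compatibility at $r_i$ of the chosen trivialization of $\fT_j$ on the two branches is exactly what the edge relation $\log x_i+\log y_i=l_i$ encodes, so the local isomorphisms can be chosen to agree at each node (after rescaling by a unit, which is harmless since $\uC_0\cap\PP_i$ is a reduced point). Therefore the global isomorphism $\cT_j\xrightarrow{\sim}\fT_j$ exists for every $j$ if and only if \eqref{equ:bundle-iso} holds for every $j$, and by Lemma~\ref{lem:torsor} this is equivalent to the existence of the stable log map $f$. (Stability of $f$ follows because $\uf$ is already a stable map in Construction~\ref{con:underlying-comb} and the log structure does not affect the finiteness of automorphisms.)

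The main obstacle I anticipate is bookkeeping the node contributions with the correct signs: one must be careful that the edge element $l_i=\log x_i+\log y_i$ splits between the $x_i$-direction on $\uC_0$ and the $y_i$-direction on $\PP_i$ in a way consistent with the normalization in Lemma~\ref{lem:lift-char}, so that the divisor appearing on $\uC_0$ is $c_{\infty j}q_\infty-\sum_i c_{ij}r_i$ and not some variant with wrong signs or with the $c_{ij}$ placed on the teeth side. Everything else — the componentwise computation of $\fT_j$, the identification of $\cT_j$ with $\uf^*N_{D_j}$, and the patching — is routine once this normalization is pinned down.
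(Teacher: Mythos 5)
Your proposal is correct and follows essentially the same route as the paper: reduce to the torsor-isomorphism criterion of Lemma~\ref{lem:torsor}, compute $\cT_j$ and $\fT_j$ componentwise (on each tooth $\PP_i\cong\PP^1$ the two torsors have equal degree and are thus automatically isomorphic, so the only condition lives on the handle $\uC_0$), and glue over the nodes of the comb. Your consistent use of the dual convention ($N_{D_j}$ versus the paper's $N^\vee_{D_j}$) cancels on both sides and yields the same condition \eqref{equ:bundle-iso}, and your attention to the sign bookkeeping at the edge elements is exactly the point the paper leaves implicit.
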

\begin{proof}
We notice that the restriction $(\cT_j)|_{\uC_0}$ is the torsor associated to $N^{\vee}_{D_j}|_{\uC_0}$, and $\fT_{j}|_{\uC_0}$ is the torsor associated to $\cO_{\uC_0}(- c_{\infty j}\cdot q_{\infty} + \sum_{i=1}^{m}c_{ij}\cdot r_i)$. Furthermore, the restriction $(\cT_j)|_{\PP_i}$ is the torsor associated to $\cO_{\PP_i}(-c_{ij})$, and $\fT_{j}|_{\PP_i}$ is the torsor associated to $\cO_{\PP_i}(-c_{ij} r_i)$. Since the curve $\uC$ is a comb with $\PP_i \cong \PP^1$, the statement follows from Lemma \ref{lem:torsor}.
\end{proof}

\begin{corollary}\label{cor:rat-lift}
\R{Let $\uf$ be a genus zero pre-log comb as in Definition \ref{def:underlying-comb}.} Then the log lift $f$ over $\uf$ exists.
\end{corollary}
\begin{proof}
In the genus zero case, the existence of isomorphisms (\ref{equ:bundle-iso}) follows from the degree consideration.
\end{proof}

%{\red \Qile{Do we need this?}
%In case the source curve is smooth, we have
%
%\begin{proposition}\label{prop:general-lift-center-rat}
%Let $X$ be a log smooth variety and $Y \subset X$ a connected center. Consider a usual genus zero stable map $\uf: \uC \cong \PP^1 \to \uY$. Assume that there is a log \'etale birational modification $X' \to X$ with a connected center $Y' \subset X'$ over $Y$ such that
%\begin{enumerate}
% \item $Y'$ is of Deligne-Faltings type, hence it has a distinguished chart $\beta: P \to \ocM_{Y'}$, \R{by Lemma \ref{lem:dis-chart-DFcenter}}; and
% \item \R{$\co(\uf'_*[\uC]) \in P^{\vee}$} for some lift $\uf': \uC \to \uY'$ of $\uf$\R{, where $\co$ is defined in (\ref{equ:contact-order}). }\Yi{3.62:Qile:Change made} 
%\end{enumerate}
%Then there is an $\AA^1$-curve $f: C/S \to X$ over $\uf$.
%\end{proposition}
%\begin{proof}
%By Corollary \ref{cor:rat-lift}, we could lift $\uf'$ to an $\AA^1$-curve $f': C/S \to Y'$. Composing $f'$ with $Y' \to Y \to X$ yields the $\AA^1$-curve as needed.
%\end{proof}
%}
%

\subsection{Deformation of Combs}\label{ss:comb-def}
%{\red For any $\lambda \subset [k] := \{1,\cdots, k\}$, recall that $D_{\lambda} = \cap_{i \in \lambda} D_{i}$. 
%
%
%Let $X_{\lambda}$ be the log scheme associated to the pair $(D_{\lambda}, \cup_{i \in I(\lambda)}(D_{\lambda}\cap D_i))$, where $I(\lambda)$ is the collection of indices $i$ such that $D_{i}\cap D_{\lambda}$ is a \R{non-empty} smooth divisor in $D_{\lambda}$. \Yi{3.65: Qile: add non-empty} %We learned the following lemma from a discussion with Professor Yi Hu. 
%}

We next study a more general situation where the combs are not necessarily contained in the center. \R{This generalization will not complicates the calculation, but is useful for studying the deformation of other types of combs \cite[Section 3]{rankone}.} \Yi{3.70, Qile: addressed here!}

%We next study the comb-smoothing technique \R{in} the logarithmic setting, \R{and consider a slightly more general situation}. \R{This generalization will not complicates the calculation, but is useful for studying many other types of configurations of stable log maps \cite[Section 3]{rankone}.} \Yi{3.70, Qile: addressed here!}

%\R{
%We will consider somewhat more general situation including combs not only lie on a center as in Definition \ref{def:underlying-comb}, but also lie on some strata closures as below, to which our construction also applies. {\red Qile: give a reference to how to apply to these construction.}
%}
%

\begin{hyp}\label{not:comb-smoothing}
Notations as in Section \ref{ss:underlying-comb}, consider the center $D_{\lambda}$ with $\uD_{\lambda} = \cap_{i \in \lambda} \uD_{i}$. Let $f: C/S \to X$ be a genus $g$ stable log map over a geometric point $\uS$ with contact markings 
\[q_{\infty 1},\cdots, q_{\infty k} \]
such that
\begin{enumerate1}
 \item $\uC = \cup_{i=0}^{m}\uC_i$ with smooth irreducible component $\uC_{i} \cong \PP^1$ for $i \neq 0$, and a smooth genus $g$ component $\uC_0$.

 \item For each $i \neq 0$, we have a node $r_i \in \uC$ joining $\uC_i$ and $\uC_0$. 
 % All nodes of $\uC$ are of this form.  %Let $p_i$ and $q_i$ denote the special points on $\uC_i$ and $\uC_0$ corresponding to $r_i$.
 
  \item \R{$\uf_0:= \uf|_{\uC_0}: \uC_{0} \to  \uD_{\lambda}$ is an immersion.}
 
 \item For $i \neq 0$, $\uf|_{\uC_i}$ is also an immersion.
 
   \item For $i\neq 0$, $\vec{c}_i := (c_{ij})_{j \in \lambda}$ \R{viewed as a $\kk$-vector is non-zero}\Yi{3.69, why??Qile:changde, it should be a $k$-vector, right?}, where $c_{ij} = f_*[C_i] \cdot \uD_j \in \ZZ$.
   
   \item \R{For each $l$, the image of the contact order of $q_{\infty l}$ (see \ref{equ:contact-order}) in $\ocM_{X, f(q_{\infty l})}^{\vee} \otimes_{\ZZ}\kk$ is non-zero.}

% \item $f(C_i \setminus \{p_i\}) \subset X^{\circ}$ and $f(p_i) \in X_{\lambda}$ for $i \neq 0$.
 
%{\red \item \R{The underlying morphism $\uf$} is a local immersion \Yi{3.71 Qile: explanation added} away from the nodes.}
\end{enumerate1} 
\end{hyp}
In the above setting, \R{if we further require $\uf(\uC) \subset \uD_{\lambda}$, then we obtain the logarithmic comb in Definition \ref{def:underlying-comb}. 
%But in general, logarithmic combs might not satisfy $(4)$ and $(5)$.  
The stable log map $f$ in Hypothesis \ref{not:comb-smoothing} may have log structures different then the one in Section \ref{ss:lift-to-log}. However, to analyze the deformation theory, only the existence of $f$ is needed. 
%We expect the set-up in Notation \ref{not:comb-smoothing} will be useful for studying more general types of configurations of stable log maps. \end{remark}
}

%It follows from the assumption that the restriction $f|_{\uC_i}$ defines an $\AA^1$-curve $f_i: C_i' \to X$ for $i \neq 0$ of contact order $c_i$, and the one-contacted genus $g$ log map $f_0: C_0' \to \lX_{\lambda}$. 

%We further assume that $\vec{c}_i \in N_k$ is a \R{non-zero} integral vector, \R{and the underlying morphism $\uf$} is a local immersion \Yi{3.71}\Qile{3.71: explanation added} away from the nodes and the contact markings.

\begin{lemma}\label{lem:log-immerse}
\R{
Notations as in Hypothesis \ref{not:comb-smoothing},
} the morphism 
\[
\diff f: f^*\Omega_{X} \to \Omega_{C/S}.
\]
is surjective with locally free kernel, denoted by $N_{f}^{\vee}$. \Yi{3.72 Qile: explain referee's question by this lemma} 
%
%Denote by $N^{\vee}_{f}$ the kernel of $\diff f$.
%
%Suppose $f$ is a stable log maps described in Notation \ref{not:comb-smoothing}, and the underlying morphism $\uf_0$ is a local immersion away from marked points. We further assume that for each marked point $q_{\infty l}$, either the stable log map $f$ hence $f_0$ is locally an immersion near $q_{\infty l}$, or the contact orders at $q_{\infty l}$ for both $f$ and $f_0$ viewed as $k$-vectors are non-zero.  Then the two morphisms $\diff f$ and $\diff f_0$ above are surjective with locally free kernels, denoted by $N_{f}^{\vee}$ and $N_{f_0}^{\vee}$ respectively.
\end{lemma}
\R{
\begin{proof}
It suffices to check the statement locally on the underlying curve $\uC$. Locally near points of $\uC$ where $\uf$ is a local immersion, the statement is identical to the classical situation without log structures. We next check the statement locally around a node $r_i \in \uC$. 

Denote by $x_i$ and $y_i$ the two local coordinates of the two components $\uC_i$ and $\uC_0$ around  $r_i$ respectively. Then the line bundle $\Omega_{C/S}$ locally around $r_i$ is generated by the section $\frac{\diff x_i}{x_i} = - \frac{\diff y_i}{y_i}$. 

On the target side, denote by $\{\delta_j \}_{j \in \lambda}$ the local sections of $\cM_{X}$ around $\uf(r_i)$ such that $\delta_j$ corresponds to the defining equation of $D_i$ near $\uf(r_i)$. Then the set of local sections $\{\frac{\diff \delta_j}{\delta_j}\}_{j\in \lambda}$ spans a rank $|\lambda|$ sub-bundle of $\Omega_{X}$ locally around $r_i$. By choosing coordinates $\{\delta_j\}$ carefully, and using Hypothesis \ref{not:comb-smoothing}, we may assume
\begin{equation}\label{equ:local-log-map}
f^{\flat}(\delta_j) = c_{ij} \log x_i
\end{equation}
where $\log x_i$ denotes the corresponding section in $\cM_{C}$ locally around $r_i$. We thus calculate that
\begin{equation}\label{equ:log-diff}
\diff f: (\frac{\diff \delta_j}{\delta_j})_{j \in \lambda} \mapsto \vec{c}_i \cdot \frac{\diff x_i}{x_i} = - \vec{c}_i \cdot \frac{\diff y_i}{y_i}.
\end{equation}
Since $\vec{c}_i$ is a non-zero $k$-vector, the desired statement around $r_i$ follows. 

The case of marked points follows from a similar calculation. 
\end{proof}
}

%Notation \ref{not:comb-smoothing}(4), (5), and (6) imply that $\diff f$ and $\diff f_i$ are surjective with locally free kernel $N_{f}^{\vee}$ and $N_{f_i}^{\vee}$ respectively. \Yi{3.72}\Qile{Do you mean 3.73: this should be a lemma.} 

To calculate the deformation of log combs, we study the structure of the log cotangent bundle $\Omega_X$ along $D_{\lambda}$. 
%We will prove a result slightly more general than what is needed for Theorem \ref{thm:comb-smoothing}. We expect this will be helpful for studying more general configuration of curves by combining the techniques of \cite{CZ} with the paper.
\Yi{3.63 Qile: Simplified according to the referee}\Yi{I do not see the point here}

\begin{lemma}\label{lem:comp-cotangent} %\cite[Property 2.3]{EV92}
Notations as above, there is a natural exact sequence
\[
0 \to \Omega_{\uD_{\lambda}} \to \Omega_{X}|_{\uD_{\lambda}} \to \cO_{\uD_{\lambda}}^{\oplus |\lambda|} \to 0.
\]
\end{lemma}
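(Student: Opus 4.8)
The plan is to build the exact sequence from the standard comparison of log differentials on a pair with those on a sub-divisor, applied inductively to the $|\lambda|$ components $D_i$, $i\in\lambda$. First I would treat the case $|\lambda|=1$, say $\lambda=\{i\}$, so $D_\lambda=D_i$ is a smooth center. Here $X_\lambda^\dagger$ is the log scheme on $D_i$ with boundary $\bigcup_{j\in I(\lambda)}(D_i\cap D_j)$, i.e. exactly the trace on $D_i$ of the remaining simple normal crossings divisor. The residue map along $D_i$ then gives the short exact sequence
\[
0 \to \Omega_{X_\lambda^\dagger} \to \Omega_X|_{D_i} \xrightarrow{\ \mathrm{res}\ } \cO_{D_i} \to 0,
\]
which is the log-geometry incarnation of the usual conormal/residue sequence for a log smooth variety with an snc boundary component removed; the surjection onto $\cO_{D_i}$ is the residue along $D_i$ and its kernel is precisely the sheaf of log differentials on $D_i$ with poles along the other boundary divisors meeting it. This is essentially \cite[Complement 1]{KKato} combined with the fact that $D_i$ is itself log smooth with the restricted log structure.

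Next, for general $\lambda = \{i_1,\dots,i_n\}$ I would iterate: set $\lambda_t = \{i_1,\dots,i_t\}$ and let $X_{\lambda_t}^\dagger$ be the log scheme on $D_{\lambda_t}$ with the induced snc boundary (the trace of the $D_j$, $j\notin\lambda_t$, that still cut out smooth divisors). The one-component case, applied to the boundary divisor $D_{i_{t+1}}\cap D_{\lambda_t}$ inside $X_{\lambda_t}^\dagger$, yields
\[
0 \to \Omega_{X_{\lambda_{t+1}}^\dagger} \to \Omega_{X_{\lambda_t}^\dagger}|_{D_{\lambda_{t+1}}} \to \cO_{D_{\lambda_{t+1}}} \to 0.
\]
Restricting each of these to $D_\lambda$ (all terms are locally free, and transversality of the snc divisor guarantees the restrictions stay exact) and splicing the $n$ sequences together along the inclusions $\Omega_{X_{\lambda_{t+1}}^\dagger}\hookrightarrow \Omega_{X_{\lambda_t}^\dagger}$ gives a filtration of $\Omega_X|_{D_\lambda}$ with successive quotients $\cO_{D_\lambda}$ ($n$ copies) and bottom term $\Omega_{X_\lambda^\dagger}$. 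Since the top quotients are all trivial line bundles, the filtration collapses to the asserted four-term-flattened-to-three sequence
\[
0 \to \Omega_{X_\lambda^\dagger} \to \Omega_X|_{D_\lambda} \to \cO_{D_\lambda}^{\oplus|\lambda|} \to 0,
\]
the surjection being the direct sum of the residue maps along the $D_i$, $i\in\lambda$.

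The main obstacle I anticipate is bookkeeping rather than anything deep: one must check that at each stage the divisor $D_{i_{t+1}}\cap D_{\lambda_t}$ is genuinely a \emph{smooth} boundary divisor of $X_{\lambda_t}^\dagger$ (so that the residue sequence applies), that the definition of $X_\lambda^\dagger$ via the index set $I(\lambda)$ is compatible with this iteration — some $D_j$ may fail to meet $D_{\lambda_t}$ in a divisor, and those simply contribute nothing — and that restricting exact sequences of locally free sheaves to the smooth subvariety $D_\lambda$ preserves exactness, which follows from the transversality built into the simple normal crossings hypothesis. An alternative, cleaner route avoiding the induction is to work étale-locally where $\uX = \AA^r$ with coordinates $z_1,\dots,z_r$, $D_j = \{z_j=0\}$: then $\Omega_X|_{D_\lambda}$ is freely generated by $\dlog z_j$ ($j\in[k]$) and $dz_j$ ($j>k$), the residue map sends $\dlog z_j \mapsto 1$ in the $j$-th copy of $\cO_{D_\lambda}$ for $j\in\lambda$ and kills all other generators, and the kernel is manifestly $\Omega_{X_\lambda^\dagger}$; then check the maps glue. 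I would likely present the étale-local computation as the proof and mention the residue-sequence viewpoint as motivation.
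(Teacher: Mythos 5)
Your proposal is correct, but it takes a genuinely different route from the paper. The paper's proof is a global homological argument: it introduces the auxiliary log scheme $X'$ whose log structure comes only from the components $D_i$ with $i \notin \lambda$, takes the conormal sequence $0 \to N^{\vee}_{D_{\lambda}/\uX} \to \Omega_{X'}|_{D_{\lambda}} \to \Omega_{X_{\lambda}^{\dagger}} \to 0$ of the strict map $D_{\lambda} \to X'$, tensors the residue sequence $0 \to \Omega_{X'} \to \Omega_{X} \to \sum_{i\in\lambda}\cO_{D_i} \to 0$ with $\cO_{D_{\lambda}}$, and explicitly computes the resulting $\Tor_1$ terms (which assemble into exactly $N^{\vee}_{D_{\lambda}/\uX}|_{D_{\lambda}}$); a final diagram chase cancels the conormal bundle against itself and produces the asserted sequence. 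Your approach bypasses the Tor computation entirely, either by peeling off one boundary component at a time or, in the version you say you would actually write, by exhibiting the surjection as the direct sum of residue maps and checking kernel and surjectivity in an \'etale-local snc chart. Both are valid; the paper's version is coordinate-free throughout and makes visible why the conormal bundle appears and disappears, while yours is shorter and more elementary at the cost of a gluing check for the local residue maps. One caution on your inductive variant: an iterated extension with successive quotients $\cO_{D_{\lambda}}$ does not automatically ``collapse'' to $\cO_{D_{\lambda}}^{\oplus|\lambda|}$ (extensions of $\cO$ by $\cO$ need not split on a general $D_{\lambda}$); what saves you is precisely that the quotient map is the globally defined direct sum of the individual residues, so you should lead with that map — as your local computation does — rather than with the filtration.
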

%Let $X'$ be the log scheme associated to the pair $(\underline{X}, \uD = \cup_{i \notin \lambda} \uD_{i})$ with simple normal crossings boundary. 
\begin{proof}
For simplicity, we assume $D_{\lambda} \subset D_i$ for all $i$. We have an exact sequence
\begin{equation}\label{equ:restriction3}
0 \to N_{\uD_{\lambda}/\uX}^{\vee} \to \Omega_{\uX}|_{\uD_{\lambda}} \to \Omega_{\uD_{\lambda}} \to 0
\end{equation}
On the other hand, \R{we have the residue sequence}
\[
0 \to \Omega_{\uX} \to \Omega_{X} \to \sum_{i} \cO_{D_i} \to 0
\]

Tensoring with $\cO_{\uD_{\lambda}}$, we obtain the exact sequence:
\begin{equation}\label{equ:restriction}
0 \to \sum_{i} Tor^{\cO_{\uX}}_1(\cO_{\uD_{\lambda}},\cO_{\uD_{i}}) \to \Omega_{\uX}|_{\uD_{\lambda}} \to \Omega_{X}|_{\uD_{\lambda}} \to \cO_{\uD_{\lambda}}^{\oplus|\lambda|} \to 0
\end{equation}

To calculate the left hand side, we take the resolution
\[
0 \to \cO_{\uX}(-\uD_{i}) \to \cO_{\uX} \to \cO_{\uD_{i}} \to 0.
\]
Tensoring with $\cO_{\uD_{\lambda}}$, we have the exact sequence:
\begin{equation}\label{diag:compute-tor}
0 \to Tor^{\cO_{\uX}}_1(\cO_{\uD_{\lambda}},\cO_{\uD_{i}}) \to \cO_{\uD_{\lambda}}(-\uD_{i}) \to \cO_{\uD_{\lambda}} \stackrel{\cong}{\to}  \cO_{\uD_{\lambda}} \to 0
\end{equation}
\R{hence 
$Tor^{\cO_{\uX}}_1(\cO_{\uD_{\lambda}},\cO_{\uD_{i}}) \cong \cO_{\uD_{\lambda}}(-\uD_{i})$.} \R{The sequence (\ref{equ:restriction}) becomes}
\begin{equation}\label{equ:restriction2}
0 \to N_{\uD_{\lambda}/\uX}^{\vee} \to \Omega_{\uX}|_{\uD_{\lambda}} \to \Omega_{X}|_{\uD_{\lambda}} \to \cO_{\uD_{\lambda}}^{\oplus|\lambda|} \to 0
\end{equation}
Putting (\ref{equ:restriction2}) and (\ref{equ:restriction3}) together, we obtain the exact sequence as in the statement.
\end{proof}

%%%%%%%%%%%%%%%
%%%%%%%%%%%%%%%
%%%%%%%%%%%%%%%%
%%%%%%%%%%%%%
%%%%%%%%%%%%%
%%%%%%%%%%%%%

\begin{lemma}\label{lem:attach-curve-key} 
\R{Assume Hypothesis \ref{not:comb-smoothing} holds.} We obtain the following commutative diagram with exact rows and columns:
\begin{equation}\label{diag:cotangent-compare}
\xymatrix{
 & 0 \ar[d] & 0 \ar[d]  & 0 \ar[d] & \\
0 \ar[r] & N^{\vee}_{\uf_{0}} \ar[r] \ar[d] & N_{f}^{\vee}|_{\uC_0} \ar[r] \ar[d] & V \ar[r] \ar[d] & 0 \\
0 \ar[r] & \uf^*_{0}\Omega_{\uD_{\lambda}} \ar[r] \ar[d] & f^*\Omega_{X}|_{\uC_0} \ar[r] \ar[d] & \cO^{\oplus |\lambda|} \ar[r] \ar[d]^{\phi} & 0\\
0 \ar[r] & \Omega_{\uC_{0}} \ar[r] \ar[d] & \Omega_{C}|_{\uC_0} \ar[r] \ar[d] & \sum_i \kk_{r_i}  \ar[r] \ar[d] & 0 \\
  & 0 & 0 & 0 &
}
\end{equation}
%where \R{all} rows and columns are exact. 
Furthermore, the restriction $\phi|_{r_i}$ is given by the $(1\times |\lambda|)$-matrix $(-\vec{c}_i)$. In particular, the vector bundle $V^\vee$ is the elementary transform of the trivial vector bundle along the direction $\vec{c}_i$, \R{see \cite[Lemma 3.15]{Starr}.}
% \Yi{3.74: explain elementary transform, Qile: we seems not really using elementary transform, and we may put it in a remark?} 
%\Yi{add the statement about elementary transform}%\Qile{I erased the further explanation about elementary transform as this might relates to the issue of smooth total space of a general smoothing.}%
\end{lemma}
\R{
\begin{proof}
Observe that the middle row is obtained by pulling back the exact sequence in Lemma \ref{lem:comp-cotangent}, and the middle column follows from Lemma \ref{lem:log-immerse}.

Now consider the bottom row. Since $\uC_0$ is a component of $\uC$, it suffices to consider the morphism 
\begin{equation}\label{equ:node-comp}
\Omega_{\uC_{0}} \to \Omega_{C}|_{\uC_0} 
\end{equation}
at each node $r_i$. Let $x_i$ and $y_i$ be the local coordinates around $r_i$ as in the proof of Lemma \ref{lem:log-immerse}. Then locally around $r_i$, the bundle $\Omega_{\uC_{0}}$ is generated by the local section $\diff y_i$, and $\Omega_{C}$ is generated by the local section $\frac{\diff y_i}{y_i}$. Thus, (\ref{equ:node-comp}) is the obvious injection with cokernel supported on each node $r_i$ as in the diagram. Indeed, $\kk_{r_i}$ is the torsion sheaf supported on $r_{i}$ with generator given by the image of $\frac{\diff y_i}{y_i}$. This proves the exactness of the bottom row.

Next, consider the commutativity of Diagram (\ref{diag:cotangent-compare}). By diagram chasing, it suffices to show the commutativity of the lower left conner which follows from $\uf_0 = \uf|_{\uC_0}$. Using the commutativity, we observe that
\[
N^{\vee}_{f_0} \to f^*\Omega_{X}|_{\uC_0} \to \Omega_C|_{\uC_0}
\]
is the zero morphism, hence it factors through $N^{\vee}_{f}|_{\uC_0}$. \Yi{3.75: Qile: explained the horizontal arrow on the upper left conner.}

Finally, we calculate the morphism $\phi$. Recall that $\kk_{r_i}$ is the torsion sheaf supported on $r_{i}$ with generator given by the image of $\frac{\diff y_i}{y_i}$. On the other hand, the morphism 
$
\Omega_{X}|_{D_{\lambda}} \to \cO^{\oplus |\lambda|}
$
is induced by taking the residue along each divisor $D_j$ with $j \in \lambda$. Thus, using the notations in the proof of Lemma \ref{lem:log-immerse}, for each $j \in \lambda$, the corresponding copy of $\cO$ in $\cO^{\oplus |\lambda|}$ has a generator given by the image of $\frac{\diff \delta_j}{\delta_j}$. Hence, locally around each $r_i$, the morphism $\phi$ is induced by (\ref{equ:log-diff}). In particular, $\phi|_{r_i}$ is given by $(-\vec{c}_i)$. 
\end{proof}
}

\subsection{Proof of Theorem \ref{thm:comb-smoothing}}\label{ss:comb-proof}
\R{The proof is divided into several steps.}

\vspace{3mm}

\R{
\noindent 
{\bf Step 1. Reduction to the simple normal crossings case.}

\begin{lemma}\label{lem:resolution-center}
Let $Y$ be a center of a log smooth variety $X$. Suppose $Y$ is proper, separably rationally connected, and fully free (resp. primitively free). For any proper, birational, log \'etale morphism $\pi: X' \to X$, there exists a center $Y'$ of $X'$ over $Y$ such that $Y'$ is fully free (resp. primitively free). 
\end{lemma}
\begin{proof}
Denote by $A$ the set of centers of $X'$ over $Y$. Consider a center $Y' \in A$. By Lemma \ref{lem:center-resolution}(3), the projection $\uY' \to \uY$ is indeed an isomorphism. Hence $\uY'$ is also separably rationally connected. 

By Lemma \ref{lem:center-DF}, we may take distinguished charts $\beta: P \to \ocM_{Y}$ and $\beta: Q_{Y'} \to \ocM_{Y'}$. The morphism $\pi: X' \to X$ induces an inclusion of cones $(Q_{Y'})^{\vee} \hookrightarrow P^{\vee}$ such that $(Q_{Y'}^{gp})^{\vee} = (P^{gp})^{\vee}$. We may view $(Q_{Y'})^{\vee} $ as a sub-cone of $P^{\vee}$. Since $\pi$ is proper, log \'etale, and birational,  we have 
\[
P^{\vee} = \cup_{Y' \in A} (Q_{Y'})^{\vee}.
\]
Thus, by Lemma \ref{lem:curve-in-center}, every log admissible curve in $Y$ is also log admissible in one of the center in $A$. Since the set $A$ is finite, there exists some $Y' \in A$ which has a log admissible free rational curve $\uF \to \uY'$ such that $\co_{Y'}(\uF)$ is contained in the interior of $(Q_{Y'})^{\vee}$. We next show that such $Y'$ is fully free.

Consider a set of log admissible free rational curves $B = \{\uF_1, \cdots, \uF_k\}$ on $\uY$ whose total contact orders span $P^{\vee}\otimes_{\ZZ}\kk$. We may choose a sufficiently large $m \in \NN$ such that $B' = \{\uF'_1 = m\cdot \uF + \uF_1, \cdots, \uF'_k = m \cdot \uF + \uF_k\}$ are all log admissible. The smoothing techniques in \cite[II.7]{kollar} imply that the curve classes in $B'$ can be represented by free rational curves in $\uY'$. Thus $Y'$ is fully free. 

The primitivity of $Y'$ follows from the same argument.
\end{proof}
}

By \cite[Theorem 5.10]{Ni}, there exists a birational, log \'etale resolution $\pi: X' \to X$ such that $X'$ has simple normal crossings boundary.  Using Lemma \ref{lem:resolution-center}, we may assume $X$ has simple normal crossings boundary. \Qile{3.77: add another prop to solve this problem.}

\vspace{2mm}
\R{
\noindent
{\bf Step 2. Construct the log comb.}

By assumption, there exists free rational curves $\uR_1, \cdots, \uR_l$ on $\uY$ such that their total contact orders lie in $P^\vee$, and span the {$\kk$-}vector space $(P^{gp})^{\vee}\otimes_{\ZZ}\kk$. Then there exists an open subset $\uV\subset \uY$ such that every point in $\uV$ is contained in the image of a free deformation of $R_i$ for all $i$.
Now we construct a pre-log comb:
\begin{enumerate1}
	\item (handle) a very free rational curve $\uf_0 : \uC_0 \to \uY$ whose general point lies in $V$;
	\item (teeth) for $i = 1, \cdots, m$, a free rational curve $\uf_i:C_i:=\PP^1 \to \uY$ in $Y$ which is a deformation of some $R_j$. 
	\end{enumerate1} 
For $i \neq 0$, glue $\uf_{i}$ to $\uf_0$ along points $p_i \in \uC_i$ and $q_i \in \uC_0$. This yields a pre-log comb $\uf: \uC \to \uY$ as in Definition \ref{def:underlying-comb}. We may pick a general smooth point $q_{\infty} \in \uC_0$. By Corollary \ref{cor:rat-lift}, we can lift $\uf$ to a stable log map $f: C/S \to X$ with a unique marking $q_{\infty}$. A figure of the underlying curve $\uC$ is depicted below: \Qile{1.42:figure added.}
}

\vspace{2mm}

\begin{center}

\begin{tikzpicture}
%\draw [gray!50, xshift=4cm]  (0,0) -- (1,.3) -- (2,.1) -- (3,0);
\draw [black, thick, xshift=4cm] plot [smooth, tension=0.5] coordinates { (0,0) (2,.3) (4,.2) (6,0) (8,.3) (10, .1)};
\node [left, xshift=4cm] at (0,0) {$\uC_0$};

\draw [black, thick, xshift=4cm] (2,.5) -- (1,-1.5);
\node [left, xshift=4cm]  at (1,-1.5) {$\uC_1$};

\draw [black, thick, xshift=4cm] (4,.5) -- (3.5,-1.5);
\node [left, xshift=4cm]  at (3.5,-1.5) {$\uC_2$};

\node [left, xshift=4cm] at (5,-1.5) {$\cdots$};

\draw [black, thick, xshift=4cm] (6.5,.5) -- (7,-1.5);
\node [left, xshift=4cm]  at (7,-1.5) {$\uC_m$};

\node [dot=black, xshift=4cm] at (8,.3) {};

\node [below, xshift=4cm] at (8,.3) {$q_{\infty}$};

%\node [below] at (1,1) {below};
\end{tikzpicture}

\end{center}

%We may further assume that $\uf_0$ is an immersion by the following argument.  \R{By \cite[II. 1.8]{kollar}, we may further assume that $\uf_0$ is an immersion.}\Yi{3.78, Qile: revised}

%Since the center $Y$ is fully free, \R{we may choose free rational curves $\uf_i:\PP_i \to \uY$ for $i = 1, \cdots, m$ in $Y$ such that the intersection number $\vec{c}_i$ as in Definition \ref{def:underlying-comb}(3) are non-negative, and the collection $\{\vec{c}_i\}_{i=1}^m$ spans the $k$-vector space $P^{\vee}\otimes_{\ZZ}k$.

%Further deforming $\uf_i$ for $i = 0, 1, \cdots, m$ and applying the product trick as above, we may assume that 
%\begin{enumerate}
 %\item the morphisms $\uf_i$ are immersions; and
 %\item $\uf_i(\PP_i)$ meets $\uf_{0}(\PP_0)$ at a general point for each $i \neq 0$.
%\end{enumerate}\Qile{3.79: expanded}

\vspace{1mm}

\R{
Consider the log smooth variety $X \times \PP^1$ where we equip $\PP^1$ with the trivial log structure. The fully free center $Y$ gives a fully free center $Y\times \PP^1$ of $X\times \PP^1$. We note that any free (resp. very free) rational curve $u:\P^1\to \uY$ gives a free (resp. very free) immersed rational curve on $\uY\times \P^1$ by taking its graph curve, and  any very free $\AA^1$-curves on $X\times \PP^1$ project to a very free $\AA^1$-curve on $X$. Thus, replacing $X$ by $X\times \PP^1$, we may assume that $\uf$ is an immersion on each irreducible component of $\uC$.

\vspace{2mm}

\noindent
{\bf Step 3. Analyzing the positivity.}
We are now in the situation of Hypothesis \ref{not:comb-smoothing} with $D_{\lambda} = Y$. We next analyze the positivity of $N_f = (N_{f}^{\vee})^{\vee}$ as in Lemma \ref{lem:log-immerse}. Pick two general points $t_1$ and $t_2$ on $\uC_0$, we have 
\begin{equation}\label{eq:s}
0 \to \cup_{i\neq 0} N_f|_{C_i}(-p_i)\to N_f(-t_1-t_2) \to N_f|_{C_0}(-t_1-t_2)\to 0.
\end{equation}

For each $i\neq 0$, since $C_i$ is a free rational curve in the center, using the left vertical exact sequence in (\ref{equ:node-comp}), we have \Yi{this may require a lemma.Qile: fixed!} 
\[
H^1(N_f|_{C_i}(-p_i))=H^1(N_{C_i}(-p_i))=0.
\]

Since $m$ can be sufficiently large, and the total contact orders of $\uR_1,\cdots,\uR_l$ span $(P^{gp})^{\vee}\otimes_{\ZZ}\kk$, by Lemma \ref{lem:attach-curve-key} and \cite[Lemma 3.15]{Starr}, we conclude that 
$H^1(N_{f}(-t_1-t_2)|_{\uC_0})=0.$
By (\ref{eq:s}), we have $H^1(N_f(-t_1-t_2))=0$. By \cite[Theorem 5.9]{Logcot}, \Yi{(1.37): Qile: precise reference added} $f$ is unobstructed, and a general deformation of $f$ produces an $\AA^1$-curve with positive log normal bundle, thus a very free $\A^1$-curve. \qed
%Note that $f^*T_{X}$ is semi-positive. \Yi{3.80, Qile: remain to be solved.}
}

%{\red By \cite[Theorem 5.9]{Logcot}, \Yi{(1.37): Qile: precise reference added} the above sequences govern the deformation of $f$ and $f_0$ respectively. }

\begin{corollary}\label{cor:lift-center-rat}
Let $X$ be a log smooth variety with a proper, Deligne-Faltings type center $Y \subset X$. Then a rational curve $\uf: \uC = \PP^1 \to \uY$ can be \R{lifted} to a \R{stable log map with at most one marking} $f: C/S \to X$ if and only if the curve $\uf$ is log admissible. \R{If further assume $\uf$ is free on $\uY$, then a general deformation of $f$ is a free log rational curve. In particular, $X$ is separably log uniruled.}
\Yi{(3.12)} \Qile{Referee 3 complains that hard to appreciate this result if $\co_{\beta}$ is not defined.}\Yi{wrong place}
\end{corollary}
\begin{proof}
\R{
By a similar argument as in Lemma \ref{lem:resolution-center}, it suffices to consider $X$ has simple normal crossings boundary. The existence of the log lift follows from Corollary \ref{cor:rat-lift}. By Lemma \ref{lem:comp-cotangent} and  \cite[Theorem 5.9]{Logcot}, $f$ is unobstructed whose general deformation yields a free log rational curve.
}
\end{proof}

%Since the center $Y$ is fully free, we may glue \R{additional} teeth, say $\uf_i:\PP_i \to \uY$ for $i = 1, \cdots, m$, to the handle $\uC_0$ such that $\uf_i$ is a free rational curve in $\uY$ with non-negative and sufficiently general $\vec{c}_i$ as in Construction \ref{con:underlying-comb}(3). \Yi{(1.42)}Furthermore, using the product trick as above, we may assume that each $\uf_i$ is an immersion. This yields an underlying stable map $\uf: \uC \to \uY$ as in Construction \ref{con:underlying-comb}. We may pick an arbitrary smooth point $q_{\infty} \in \uC_0$. By Corollary \ref{cor:rat-lift}, we may lift $\uf$ to a stable log map $f: C/S \to X$. By Lemma \ref{lem:attach-curve-key} and (\ref{diag:cotangent-compare}), since the choice of $\vec{c}_i$ is sufficiently general, and $m$ is sufficiently large, we may assume that $N_{f}|_{\uC_0}$ is ample. Note that $f^*T_{X}$ is semi-positive. Thus $f$ is unobstructed. A general deformation of $f$ produces a very free $\AA^1$-curve.\Yi{3.79,3.80}

%%%%%%%%%%%%%%%%%%%%%%%%%%%%%%%%%%%%%%%%%wonderful

\section{Wonderful comapactifications}\label{sec:canonical}

\begin{comment}
Just as homogeneous varieties for rationally connectedness, log homogeneous varieties, and in particular toroidal embeddings of spherical varieties provide a good source for $\AA^1$-connectedness. The existence of the boundary in the log case makes the structures much richer than the proper cases.  In Section \ref{ss:toroidal} we study log structures given by the boundary of toroidal embeddings, and give a sufficient condition for a toroidal embedding to be log homogeneous, see Proposition \ref{prop:log-homogeneous}. In Section \ref{ss:wonderful} and \ref{ss:A1-on-wonderful}, we restrict ourself to the case of spherical varieties with wonderful compactifications, and give a classification of $\AA^1$-curves in Theorem \ref{thm:main-wonderful} under a fairly general set-up. Such a general set-up allows us to put different situations in the same footing. In Section \ref{ss:char-zero-wonderful} and \ref{ss:ssgroup} we study the case of wonderful compactifications in characteristic zero and the case of semi-simple linear algebraic groups over arbitrary characteristics by verifying the assumptions in Theorem \ref{thm:main-wonderful}.

The work in this section also sets up a foundation for our forthcoming paper \cite{irr-rat} on the irreducibility and rationality of the spaces of rational stable log maps in wonderful compactifications. 
\end{comment}

\subsection{Basics on spherical varieties}\label{not:spherical}

Let $G$ be a linearly reductive $\kk$-group. Let $T$ be a maximal torus of $G$ and $B$ be a Borel subgroup containing $T$.  For any subgroup $H \subset G$, denote by $H^{u}$ the unipotent radical of $H$. We write $\fX^*(H)$ and $\fX_{*}(H)$ for the character and cocharacter groups of $H$ respectively.

\begin{definition}\label{def:spherical}
Let $\uX$ be a $G$-variety. The variety $\uX$ is a \emph{separably spherical variety} if it is normal, and contains a dense open separable $B$-orbit. A subgroup $H \subset G$ is separably spherical if $G/H$ is so.
\end{definition}

%For the rest of this section, $G/H$ will denote a spherical homogeneous space. We fix a point $o \in G/H$ whose $B$-orbit is dense. We use $\uX$ to denote a spherical variety which contains $G/H$ as the open $G$-orbit. 

For reader's convenience, we collect a list of standard terminologies and results that are known to experts, \R{and will be used in this section}. Those can be found in standard references, for example \cite{Knop}. 

\begin{term}{\ }\label{term}
Let $G/H$ be a spherical homogeneous space. We fix a point $o \in G/H$ whose $B$-orbit is dense. We use $\uX$ to denote a spherical variety which contains $G/H$ as the open $G$-orbit. \Qile{I move this paragraph here together with the terminologies!}
\begin{enumerate1}

\item \R{Let $\kk(\uX)$ be the field of rational functions on $\uX$. }Denote by $\kk(\uX)_o^{(B)}$ the set of $B$-eigenfunctions ($B$-semi-invariant functions) on $\uX$ given by
	\[\{f\in \kk(\uX)\backslash\{0\}|f(o)=1,bf=\chi(b)f, \forall b\in B,\text{where } \chi\in \fX^*(B)\}.\]
	\R{Since we require that $f(o)=1$, such a $B$-eigenfunction $f$ is uniquely determined by its character $\chi$.}
	
	%We define the set of rational $B$-eigenfunctions ($B$-semi-invariant functions) on $\uX$:
	%$$\kk(\uX)_o^{(B)}=\{f\in \kk(\uX)\backslash\{0\}|f(o)=1,bf=\chi(b)f, \forall b\in B,\text{where } \chi\in \fX^*(B)\}.$$
	
\item Let $\Lambda := \Lambda(\uX)$ be the set of weights of $\kk(\uX)_o^{(B)}$. It is a finitely generated free abelian group. Its rank is called the rank of $G/H$. Indeed we have
	\[ \kk(G/H)_o^{(B)}\cong \Lambda.\]
	We use $f_\lambda$ to denote the $B$-eigenfunction determined by $\lambda\in \Lambda$.\Yi{3.84,85 fixed needs to explain in the letter we require a fixed  value on o.}
	
\item Define the valuation space $N(\uX) := Hom_\Z(\Lambda,\Q)$ with the integral structure $N(\uX)_\Z=\Lambda^\vee$.
	
\item Let $\Delta(\uX)$ be the set of $B$-stable but not $G$-stable prime divisors on $\uX$, called the set of \emph{colors}. \R{The set $\Delta(\uX)$ only depends on $G/H$, see \cite[2]{Knop}. }\Yi{3.86 fixed}
	
\item Any discrete valuation $\nu:\kk(\uX)^*\to \Q$ gives an element $\rho_\nu$ in $N(\uX)$ by restriction to $\kk(\uX)_o^{(B)} $.
	%\item We use $\Lambda$ to denote $\Lambda(X)=\Lambda(G/H)$. 
	
\item Let $D_G(\uX)$ be the set of $G$-invariant valuations on $\uX$. The valuation map $D_G(\uX)\to N(\uX)$ is injective. Denote by $\cV(\uX)$ the $\QQ$-cone generated by the image of $D_G(\uX)$ in $N(\uX)$. This is a convex cone, called the {\em valuation cone}.
	
	%The $\QQ$-cone generated by its image, denoted by $\cV(\uX)$, is convex, called \emph{the valuation cone}.
	
\item Since $\Lambda(\uX), N(\uX), \Delta(\uX), \cV(\uX)$ defined above only depend on the dense $G$-orbit $G/H$, we simply omit $\uX$ and use $\Lambda, N, \Delta$, and $\cV$.
	
\item The valuations given by $\Delta$ induce a map $\rho|_\Delta: \Z^\Delta\to N_{\ZZ}.$
	
\item A spherical variety $\uX$ is called \emph{toroidal} if no $\uD\in \Delta$ contains a $G$-orbit in its closure. It is called \emph{simple} if it has a unique closed $G$-orbit. We say that $\uX$ is a \emph{compactification} of $G/H$ if it is proper.
	
\item A spherical subgroup $H \subset G$ is called {\em sober} if $N_G(H)/H$ is finite. %\Yi{the rest should be in 5.3?}\Qile{I feel it is better to put them together, which makes the notion of colored fan more understandable.}
	
\item When $H$ is sober and separably spherical, the valuation cone $\cV$ associated to $G/H$ is a strictly convex full dimensional cone in $N$, see \cite[Corollary 5.3, Theorem 6.1]{Knop}. In this case, any $G$-equivariant toroidal compactification of $G/H$ is uniquely determined by a fan $\Sigma$ supported on $\cV$. The fan $\Sigma$ and \R{the cone} $\cV$ are called the {\em colored fan} and {\em colored cone} respectively. 
	
\item For a sober and separably spherical subgroup $H$, the compactification $\u{\X}$ of $G/H$ associated to the colored fan $(\cV,\emptyset)$ is called the {\em wonderful compactification} of $G/H$. In this case $\u{\X}$ is both toroidal and simple with a unique closed orbit $\uY \subset \u{\X}$. 

\item \R{For any spherical $G$-variety $\uX$ and any $G$-orbit $\uY\subset \uX$, we define the open set:
	$\uX_{\uY,B}=\uX\backslash \cup \uD,$
	where the union is taken over all $B$-stable prime divisors that do not contain $\uY$.} \Yi{(1.49) fixed}
\end{enumerate1}
\end{term}

\subsection{Log structures on toroidal embeddings}\label{ss:toroidal}
The goal of this section is to define a natural $G$-equivariant log structure on toroidal spherical varieties, and to find a good criterion for a spherical variety to be log homogeneous in the following sense.

\R{
\begin{definition}\label{def:G-log} \Yi{3.89 not sure how to fix take a look; Qile: is this OK?}
Let $H$ be an algebraic group. $H$ as a log scheme with the trivial log structure can be viewed as a group object in the category of log schemes, see \cite[Definition 2.11]{Vistoli}. An $H$-action on a log scheme $Y$ is the action of the group object $H$ on $Y$ in the category of log schemes, see \cite[Definition 2.15]{Vistoli}. A log scheme $Y$ equipped with an $H$-action is called a \emph{log $H$-scheme}.
\end{definition}

\begin{remark}
To be more precise, an $H$-action on a log scheme $Y$ is a strict morphism of log schemes $H \times Y \to Y$ whose pull-back to $H\times H \times Y$ satisfies the usual cocycle condition, see \cite[Proposition 2.16]{Vistoli}. When $Y$ is a log smooth variety, the structure morphism $\cM_Y \to \cO_{\uY}$ is an inclusion. Thus, an $H$-action on $Y$ is uniquely determined by the corresponding $H$-action on $\uY$ with the induced action on the $\cM_Y$ as the subsheaf of $\cO_{\uY}$. In what follows, we will mainly consider $H$-actions on a log smooth variety.
\end{remark}
}

%both the underlying scheme $\uY$ and the sheaf of monoids $\cM$ admit an $H$-action such that the structure morphism $\M_Y\to \O_{\uY}$ is $H$-equivariant. 

\begin{definition}\label{def:log-homogeneous}
A log smooth log $H$-variety $X$ is called \emph{log homogeneous} if the morphism of sheaves
$\mathfrak{h}\otimes \O_{\uX}\to T_{X}$ is surjective, where $\mathfrak{h}$ is the Lie algebra of $H$, \R{and $T_{X}$ is the log tangent bundle of the log scheme $X$}.
\end{definition}

The main result of this subsection is the following:

\begin{proposition}\label{prop:log-homogeneous}
Let $\uX$ \R{be} a toroidal compactification of a separably spherical homogeneous space $G/H$. Assume all closed $G$-orbits of $\uX$ are separable. Then
\begin{enumerate}
 \item the pair $(\uX, \uX\setminus G/H)$ is toroidal, which yields a log smooth log $G$-variety $X$;
 \item the log smooth log $G$-variety $X$ is log homogeneous. 
\end{enumerate}
\end{proposition}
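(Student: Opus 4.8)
The plan is to prove Proposition~\ref{prop:log-homogeneous} by reducing everything to the local toroidal structure of $\uX$ along its $G$-orbit stratification, and then checking surjectivity of $\mathfrak{g}\otimes\cO_{\uX}\to T_X$ orbit by orbit. For part (1), the key point is that a toroidal compactification of a spherical homogeneous space is, by definition, a toroidal embedding in the sense of Luna-Vust: \'etale-locally near each point it looks like a toric variety, with the boundary $\uX\setminus G/H$ a normal crossings divisor. So I would first recall from the spherical-varieties dictionary (item (9) of Section~\ref{not:spherical}) that toroidal means no color contains a $G$-orbit, which is exactly the condition guaranteeing that the boundary divisor and its strata are as in a toric chart; the separability hypothesis on closed $G$-orbits ensures this remains true in positive characteristic. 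Then $(\uX,\uX\setminus G/H)$ being toroidal yields, by Kato's theory, a canonical fine saturated log structure $\cM_X$ making $X$ log smooth, and one should observe that $G$ acts on the pair $(\uX,\uX\setminus G/H)$ preserving the boundary, hence acts compatibly on $\cM_X$ by functoriality of the log structure associated to a divisor — this gives the log $G$-variety structure.

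For part (2), I would argue as follows. The log tangent bundle $T_X$ fits in the exact sequence $0\to T_X\to T_{\uX}(\log \partial X)$... more precisely $T_X = \curshom(\Omega_X,\cO_{\uX})$ where $\Omega_X$ is the sheaf of log differentials. The natural map $a\colon \mathfrak{g}\otimes\cO_{\uX}\to T_{\uX}$ coming from the $G$-action on $\uX$ lifts to a map $\tilde a\colon \mathfrak{g}\otimes\cO_{\uX}\to T_X$ precisely because $G$ preserves the boundary (a vector field coming from a group action preserving $\partial X$ is a logarithmic vector field). So the real content is surjectivity of $\tilde a$. Since both sides are coherent sheaves on $\uX$ and $G$ acts with finitely many orbits, it suffices to check surjectivity at the generic point of each $G$-orbit closure, equivalently on each stratum of the toroidal stratification. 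On the open orbit $G/H$ there is no log structure and surjectivity of $\mathfrak{g}\otimes\cO\to T_{G/H}$ is just the statement that $G/H$ is homogeneous (using separability to get surjectivity of the differential, not merely dominance). On a boundary stratum, I would use a toric chart: \'etale-locally $X$ near a point of a codimension-$k$ stratum looks like $\AA^n$ with log structure from $k$ coordinate hyperplanes, and the log tangent bundle is freely generated by $x_1\partial_{x_1},\dots,x_k\partial_{x_k},\partial_{x_{k+1}},\dots,\partial_{x_n}$; one then checks that the images of suitable elements of $\mathfrak{g}$ (coming from the torus $T$ and from root subgroups) span these, using that the stabilizer of a point in the boundary stratum, combined with the spherical/Borel-density condition, forces the $B$-action — hence the $G$-action — to be transitive on the stratum with separable orbit map.

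Concretely, the cleanest route is: (i) identify the open $B$-orbit's closure in each stratum and use that a toroidal spherical variety has the property that each $G$-orbit is itself spherical and the local structure theorem (the "étale slice"/local structure theorem for spherical varieties, e.g.\ in Knop or Brion-Luna) describes a $B$-stable affine neighborhood of a point in the closed orbit as $\AA^r \times (\text{affine space with transitive $R$-action})$ where $r$ is the rank of the stratum direction; (ii) in these coordinates $T_X$ restricted to the neighborhood is generated by the $\AA^r$-part of logarithmic vector fields $t_i\partial_{t_i}$ together with vector fields tangent to the $R$-factor, and the torus $T\subset B$ supplies the $t_i\partial_{t_i}$ (this is where the toroidal condition is used: the boundary divisors correspond to $T$-eigencoordinates) while $R$ and the unipotent part supply the rest. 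I expect the main obstacle to be exactly this last verification in positive characteristic: making sure that "the $G$-action is transitive on each stratum with separable orbit map" really follows from "all closed $G$-orbits are separable" — one needs that separability propagates from the closed orbits up through the (finitely many) strata, which should follow because each non-closed stratum $G$-equivariantly fibers over, or degenerates to, a closed one within a slice, but this propagation is the step I would write out most carefully. The characteristic-zero case, by contrast, is essentially automatic once the toric-chart picture is in place, since dominant implies separable.
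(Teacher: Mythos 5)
Your part (1) matches the paper: both rest on the local structure theorem (Theorem \ref{thm:local-s}) together with the fact that $\uX$ is covered by $G$-translates of $\uX_{\uY,B}$ for closed orbits $\uY$ (Lemma \ref{lem:closed-orbit}), so that separability of the closed orbits gives the product decomposition $Q^u\times M$ with $M$ an affine toric slice, i.e.\ the toroidal/log smooth structure. For part (2), however, you take a genuinely different route. You propose a direct, orbit-by-orbit verification of surjectivity of $\mathfrak{g}\otimes\cO_{\uX}\to T_X$ in toric charts; this is essentially a reproof of Huruguen's Theorem~1.8, which the paper simply cites for the case $\uX$ smooth. The paper then handles general (possibly singular) $\uX$ quite differently: it takes a $G$-equivariant log \'etale birational resolution $Z\to X$ with $\uZ$ smooth, notes that closed orbits of $Z$ map isomorphically to closed orbits of $X$ (so separability transfers), and concludes via $f^*T_X=T_Z$, rationality of toric singularities, and the projection formula $f_*T_Z=T_X$. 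Your approach is more self-contained and makes the mechanism visible (torus gives the logarithmic directions, $Q^u$ and the root groups give the rest); the paper's approach is shorter and deliberately avoids working in singular charts.

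Two points in your write-up need repair. First, your concrete local model ``$\AA^n$ with log structure from $k$ coordinate hyperplanes, $T_X$ generated by $x_i\partial_{x_i}$ and $\partial_{x_j}$'' is only valid when $\uX$ is smooth; in general the slice $M$ is a singular affine toric variety, and you must instead use that for any normal toric variety the log tangent sheaf is the free sheaf $\cO\otimes_{\ZZ}N$ generated by the infinitesimal torus action. With that substitution your stratum-by-stratum check goes through on all of $\uX_{\uY,B}\cong Q^u\times M$ at once, which also resolves the separability-propagation worry you flag: every $A$-orbit in a normal toric variety is a quotient of $A$ by a (smooth) diagonalizable stabilizer, hence separable, so separability of the closed orbit propagates to all orbits via the product decomposition. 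Second, ``check surjectivity at the generic point of each orbit closure'' is not sufficient for surjectivity of a map of coherent sheaves; by $G$-equivariance and Nakayama you should check at one (closed) point of each orbit, noting the cokernel is a $G$-equivariant coherent sheaf whose support is a union of orbit closures.
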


We first recall the following local structure result of spherical varieties. 

\begin{thm}[Local structure theorem] \label{thm:local-s} 
Let $\uX$ be a spherical \R{$G$-variety}. Let $\uY\subset \uX$ be a $G$-orbit containing \R{a geometric point} $x$ such that $Bx$ is open in $\uY$. Define the parabolic subgroup $Q$ to be the stabilizer of $\uX_{\uY,B}$ and choose a Levi subgroup $L$ of $Q$. Then: 
\begin{enumerate1}
\item $\uX_{\uY,B}$ is affine $B$-stable;
\item \R{Let $T$ be the maximal torus in $B$.} There exists an affine $T$-stable closed subvariety $M$ of $\uX_{\uY,B}$ such that the \R{group action}\Yi{1.51 fixed but not very sure} morphism $$\mu:Q^u\times \uM\to \uX_{\uY,B}$$ is finite surjective and $\mu^{-1}(x)=\{(e,x)\}$, where $e$ is the identity;\Yi{3.90 fixed}
\item If we further assume that $\uY$ is a separable $G$-orbit, the action morphism $\mu$ is an isomorphism;
\item If $\uY$ is $G$-separable and $\uX$ is toroidal, the variety $\uM$ is an affine toric embedding of a torus $A$ given by the quotient $L/L_0$, where $L_0 \subset L$ is a subgroup containing the derived group of $L$. Furthermore, we have $\Lambda\cong\fX^*(A)$ via the restriction of $B$-eigenfunctions on $M$. The toric variety $M$ is determined by the colored cone associated to $\uY$;
\item With the same assumptions in (4), every $G$-orbit of $\uX_{\uY,B}$ is of the form $\overline{Q^u\cdot \uM'}$, where $\uM'$ is an $A$-orbit. In particular, there is a bijection between $G$-orbits in $\uX$ and $A$-orbits in $\uM$. 
\end{enumerate1}

\end{thm}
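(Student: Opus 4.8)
The plan is to treat parts (1)--(2) as the characteristic-free local structure theorem and deduce them from \cite{Knop, Brion-log-homogeneous}, then to use the separability hypotheses to upgrade the finite map of (2) to an isomorphism and read off (4)--(5) from it. First I would recall that $\uX_{\uY,B}$ is the complement of the support of an effective $B$-stable Cartier divisor, namely the sum of the $B$-stable prime divisors not containing $\uY$; realizing this divisor as the zero locus of a $B$-eigensection of a $G$-linearized ample line bundle on $\uX$ shows $\uX_{\uY,B}$ is affine and $B$-stable, giving (1). Since $B$ is contained in $Q:=\{g\in G: g\,\uX_{\uY,B}=\uX_{\uY,B}\}$, the group $Q$ is parabolic; after fixing a Levi $L$ with $Q=Q^u\rtimes L$, and using that $Bx$ is dense together with $B=Q^u(B\cap L)$, one constructs $\uM$ as a $T$-stable affine slice through $x$ transverse to the $Q^u$-orbit. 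A dimension count together with the standard unipotent-group argument then yields that $\mu\colon Q^u\times\uM\to\uX_{\uY,B}$ is finite and surjective with $\mu^{-1}(x)=\{(e,x)\}$, which is (2).

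For (3), I would use that $\uY$ being a separable $G$-orbit forces the orbit map $Q\to Qx$ to be separable, so that at $(e,x)$ the differential $d\mu$ carries $\frq^u\oplus T_x\uM$ isomorphically onto $\frq^u\cdot x\oplus T_x\uM=T_x\uX_{\uY,B}$, where $\frq^u$ is the Lie algebra of $Q^u$; by $B$-equivariance this spreads over the dense orbit, so $\mu$ is dominant and generically \'etale, hence birational in view of $\mu^{-1}(x)=\{(e,x)\}$. A finite birational morphism onto the normal variety $\uX_{\uY,B}$ is an isomorphism, which proves (3).

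For (4), once $\mu$ is an isomorphism I would identify $\uM$ with the quotient $\uX_{\uY,B}/Q^u$; normality of $\uX_{\uY,B}$ and unipotence of $Q^u$ make $\kk[\uX_{\uY,B}]^{Q^u}$ normal, so $\uM$ is a normal affine $T$-variety, and it carries a dense $T$-orbit since $\uX_{\uY,B}$ carries the dense $B$-orbit $Bx$. Thus $\uM$ is a toric variety for $A:=T/T_0$, and identifying $T_0$ with $T\cap L_0$ for a subgroup $L_0\subset L$ containing the derived group of $L$ gives $A=L/L_0$. Restriction of the $B$-eigenfunctions $f_\lambda$ to $\uM$ lands among the $A$-eigenfunctions and induces an injection $\Lambda\hookrightarrow\fX^*(A)$ which is an isomorphism by a rank count. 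The fan of $\uM$ as an affine toric variety is generated by the images in $N$ of the $G$-invariant valuations and colors whose center contains $\uY$, which by the Luna--Vust dictionary \cite{Luna-Vust, Knop} is precisely the colored cone attached to $\uY$.

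For (5), I would observe that a $G$-orbit $\uZ\subset\uX$ meets $\uX_{\uY,B}$ exactly when it lies in no $B$-stable prime divisor avoiding $\uY$, i.e.\ when $\uY\subset\overline{\uZ}$, and that in this case $\uZ\cap\uX_{\uY,B}$ is a single $Q$-orbit; transporting through the isomorphism $\uX_{\uY,B}\cong Q^u\times\uM$ and using that $L$ acts on $\uM$ through $A$ exhibits it as $\overline{Q^u\cdot\uM'}\cap\uX_{\uY,B}$ for an $A$-orbit $\uM'\subset\uM$, and yields the asserted bijection between $G$-orbits of $\uX_{\uY,B}$ and $A$-orbits of $\uM$. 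The step I expect to be the real obstacle is the separability bookkeeping in (3) and (5) --- establishing that $d\mu_{(e,x)}$ is an isomorphism and that $\uZ\cap\uX_{\uY,B}$ is a \emph{single} $Q$-orbit --- together with checking that the slice $\uM$ is normal, since these are exactly the places where the characteristic-free construction and the separability hypotheses genuinely interact.
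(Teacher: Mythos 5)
Your overall architecture is the standard one, and it is worth saying up front that the paper does not reprove this theorem: parts (1)--(2) are cited from Knop's local structure theorem, parts (3), (5) and the first assertion of (4) are cited from Huruguen, and the only original content is a reduction for (3) --- embed $\uX$ $G$-equivariantly into $\PP(V)$ for a simple $G$-module $V$, so that it suffices to treat $\uX=\PP(V)$ with $\uX_{\uY,B}$ the complement of the unique $B$-stable hyperplane --- together with the deduction of $\Lambda\cong\fX^*(A)$ from the orbit correspondence in (3) and (5). Your treatment of (1), (2), (4) and (5) is a reasonable sketch of what those references actually do, so to that extent you are on the same track.

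The genuine weak point is (3). You assert that separability of the $G$-orbit $\uY$ forces $d\mu_{(e,x)}$ to carry $\frq^u\oplus T_x\uM$ isomorphically onto $T_x\uX_{\uY,B}$ via a decomposition $T_x\uX_{\uY,B}=\frq^u\cdot x\oplus T_x\uM$, but separability of the orbit map only gives you surjectivity of $\frg\to T_x\uY$; it does not produce the direct-sum decomposition of the full tangent space $T_x\uX_{\uY,B}$, which (especially when $\uX$, and hence $\uM$, is singular at $x$, so that $T_x\uM$ exceeds $\dim\uM$) depends on how the slice $\uM$ was actually constructed in step (2) and is exactly the content of the claim. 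This is precisely what the reduction to $\uX=\PP(V)$ buys: there $\uM$ is cut out explicitly by weight-space conditions and the differential of $\mu$ at $(e,x)$ is a concrete weight computation. Without either that reduction or an explicit description of $\uM$, your (3) is an assertion rather than a proof --- as you yourself flag. A smaller issue: in (4), the isomorphism $\Lambda\cong\fX^*(A)$ should not be justified by a ``rank count'' (the equality $\dim A=\operatorname{rank}\Lambda$ is essentially what is being proved); either argue surjectivity directly by pulling characters of $A$ back through $p_2$ to $B$-eigenfunctions on the open set, or, as the paper does, derive the bijection of eigenfunctions from the orbit correspondence established in (3) and (5).
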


\begin{proof} 
Statement (1) and (2) are proved in \cite[Theorem 1.2]{Knop93}. 

For (3), since we can $G$-equivariantly embed $\uX$ into $\P(V)$, where $V$ is a simple $G$-module, it suffices to prove the case when $\uX=\P(V)$ (not necessary spherical) and $\uX_{\uY,B}$ is the complementary in $\P(V)$ of the unique $B$-stable hyperplane. This is proved in \cite[Proposition 1.3]{Huruguen}. 

The statement (5) and the first assertion of (4) are proved in \cite[Theorem 1.4]{Huruguen} (requiring $\uM$ to be smooth is not necessary). 

By (3) and (5), it is easy to see that there is a bijection between $B$-eigenfunctions on $\uX$ and $A$-eigenfunctions on $M$ via restriction, which preserve the order given by inclusion of orbits. This proves (4).
\end{proof} 

\begin{lemma}\label{lem:closed-orbit}
A spherical variety $\uX$ is covered by $G$-translations of $\uX_{\uY,B}$ for any $G$-orbit $\uY$. Furthermore, when $\uX$ is proper, one may choose $\uY$ to be any closed \R{$G$-orbit}.
\end{lemma}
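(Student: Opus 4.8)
\textbf{Proof proposal for Lemma \ref{lem:closed-orbit}.}

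The plan is to deduce both assertions directly from the Local Structure Theorem (Theorem \ref{thm:local-s})(1)--(2), which guarantees that for any $G$-orbit $\uY \subset \uX$ with $x \in \uY$ such that $Bx$ is open in $\uY$, the set $\uX_{\uY,B}$ is an affine $B$-stable open neighborhood of $x$, and the action morphism $\mu: Q^u \times \uM \to \uX_{\uY,B}$ is finite surjective. First I would observe that $\uX_{\uY,B}$, being obtained by removing finitely many $B$-stable prime divisors not containing $\uY$, is genuinely open in $\uX$, and it meets $\uY$. The key point is then that the union $\bigcup_{g \in G} g\cdot \uX_{\uY,B}$ is a $G$-stable open subset of $\uX$ whose complement is a closed $G$-stable subset; I would argue the complement is empty. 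For this, note that the open subset $\bigcup_g g\cdot\uX_{\uY,B}$ contains the orbit $\uY$ (since $\uX_{\uY,B}$ meets $\uY$ and $\uY$ is a $G$-orbit), so if it missed some point it would miss some $G$-orbit $\uZ$ entirely; but then $\uZ$ would be contained in every $B$-stable prime divisor not containing $\uY$ appearing in the definition of $\uX_{\uY,B}$. Taking $\uY$ to be contained in the closure of $\uZ$ in the orbit-closure partial order — or more directly invoking that spherical varieties have only finitely many $G$-orbits and hence finitely many $B$-stable divisors, so that a generic point cannot lie in a given proper closed set — yields a contradiction. The cleanest route: the open $B$-orbit of $\uX$ is dense and is not contained in any prime divisor, hence lies in $\uX_{\uY,B}$; thus $\bigcup_g g\cdot\uX_{\uY,B}$ is a $G$-stable open set containing a dense subset, and being open it must be all of $\uX$.

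For the second assertion, when $\uX$ is proper I would take $\uY$ to be an arbitrary closed $G$-orbit. Such orbits exist by properness (the orbit closure poset has minimal elements, which are closed and hence projective). For a closed orbit $\uY \cong G/P$ with $P$ parabolic, one may choose $x \in \uY$ with $Bx$ open in $\uY$ — indeed, $\uY$ being a projective homogeneous space has a dense $B$-orbit by the Bruhat decomposition — so the hypotheses of Theorem \ref{thm:local-s} are satisfied, and the argument of the first paragraph applies verbatim to conclude $\uX = \bigcup_{g\in G} g\cdot\uX_{\uY,B}$.

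The main obstacle I anticipate is purely bookkeeping rather than conceptual: one must be careful that $\uX_{\uY,B}$ as defined (removing $B$-stable prime divisors \emph{not} containing $\uY$) genuinely meets $\uY$ and is open, and that the generic point / dense $B$-orbit argument is phrased so as to cover \emph{every} $G$-orbit, not merely the dense one. Since $\uX$ is spherical it has finitely many $B$-orbits, hence finitely many $B$-stable prime divisors, so $\uX \setminus \uX_{\uY,B}$ is a finite union of prime divisors; any $G$-orbit $\uZ$ not meeting $\bigcup_g g\cdot \uX_{\uY,B}$ would be $G$-stable and contained in this union, forcing one of those $B$-stable divisors to be $G$-stable and to contain $\uZ$ — but such a divisor contains $\uY$ in its closure only if $\uY$ is not contained in it by construction, and a short orbit-incidence argument (or simply: the divisor, being $G$-stable and closed, is a union of orbit closures, and cannot simultaneously avoid $\uY$ and be forced to contain the generic orbit) gives the contradiction. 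I would write this last step carefully but expect no real difficulty.
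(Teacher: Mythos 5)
Your proposal rests on a reading of the lemma that makes it false, and the key step you use to close the argument is itself incorrect. You interpret the statement as: for one fixed orbit $\uY$ (resp.\ one fixed closed orbit), the translates $\{g\,\uX_{\uY,B}\}_{g\in G}$ already cover $\uX$. Your ``cleanest route'' concludes that $\bigcup_g g\,\uX_{\uY,B}$, being a $G$-stable open set containing the dense $B$-orbit, ``must be all of $\uX$''. But an open, dense, $G$-stable subset need not be everything: its complement is a closed $G$-stable subset of the boundary, which can well be nonempty. Concretely, let $G=SL_2$ act on $\uX=\PP^2$ through the first two coordinates. This is a complete spherical variety with two closed orbits, the fixed point $P=[0:0:1]$ and the line $L=\{x_2=0\}$; the $B$-stable prime divisors are $L$ and the color $\{x_1=0\}$. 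For $\uY=P$ one gets $\uX_{P,B}=\PP^2\setminus L\cong\AA^2$, which is already $G$-stable, so its translates cover only $\AA^2$ and miss the closed orbit $L$. Your fallback ``orbit-incidence'' argument cannot be repaired either: an orbit $\uZ$ missed by all translates is contained in \emph{one} (not every) $B$-stable prime divisor not containing $\uY$, and, as the example shows, there is simply no contradiction to be derived for a fixed $\uY$.

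What the lemma actually asserts --- as both the paper's proof and its use in Proposition \ref{prop:wonderful-DF} make clear --- is that $\uX=\bigcup_{\uY}\bigcup_{g\in G}g\,\uX_{\uY,B}$ with $\uY$ ranging over \emph{all} $G$-orbits, and that for proper $\uX$ one may let $\uY$ range over the closed orbits only (in the wonderful case there is a unique closed orbit, which is why a single chart suffices there). The first part has the elementary proof you half-sketch: for $x$ with orbit $\uZ=Gx$, the irreducible set $\uZ$ cannot lie in the finite union of $B$-stable prime divisors not containing it, so $\uZ\cap\uX_{\uZ,B}\neq\emptyset$ and $x\in g\,\uX_{\uZ,B}$ for some $g$. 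The genuine content of the second part, entirely missing from your proposal, is the reduction from arbitrary orbits to closed ones: if $\uY'$ is a closed orbit in $\overline{\uZ}$ (it exists by properness), then any $B$-stable prime divisor containing $\uZ$ contains $\overline{\uZ}\supset\uY'$, whence $\uX_{\uZ,B}\subset\uX_{\uY',B}$. The paper obtains this inclusion instead from Knop's colored-cone formalism ($G\uX_{\uZ,B}$ corresponds to a colored cone, which for complete $\uX$ lies in a maximal one attached to a closed orbit), but some such step is indispensable.
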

\begin{proof}
The first statement follows from \cite[Section 2.1]{Knop}. For each $\uY$, the simple embedding $G\uX_{\uY,B}$ of $G/H$ gives a strictly colored cone $(\cC,\cF)$ \cite[Theorem 3.3]{Knop}. When $\uX$ is proper, it belongs to a strictly colored cone of maximal dimension given by $G\uX_{\uY',B}$ for a closed $G$-orbit $Y'$ with $G\uX_{\uY,B} \subset G\uX_{\uY',B}$. The second statement then follows.
\end{proof}

\begin{proof}[Proof of Proposition \ref{prop:log-homogeneous}]
Statement (1) follows from Theorem \ref{thm:local-s} and Lemma \ref{lem:closed-orbit}. Now consider the second statement. By further subdividing the colored fan of $\uX$, we may take a $G$-equivariant log \'etale birational morphism $f:Z\to X$ such that $\uZ$ is smooth. Since the log structure is Zariski and $f$ is $G$-equivariant, any closed orbit of $Z$ maps isomorphically onto a closed orbit on $X$. Thus the $G$-action on the closed orbits of $Z$ is separable. By \cite[Thm. 1.8]{Huruguen}, $Z$ is log homogeneous. Since $f^*T_X = T_Z$ and toric singularities are rational, the projection formula implies that $f_*T_Z=T_{X}$. Hence $X$ is log homogeneous as well. 
\end{proof}

%%%%%%%%%%%%%%%%%%%%%%%%%%%%%wonder
\subsection{Global charts of wonderful compactifications}\label{ss:wonderful}

\R{For the rest of this section, we deal with wonderful compactifications. By \cite[Lemma 1]{Brion-onecycle}, we may assume that $G$ is semisimple. } \Yi{3.87, 1.47 fixed}
\begin{notation}
Let $G$ be a semi-simple group, and $H \subset G$ be a sober and separable spherical subgroup. Denote by $\u{\X}$ the wonderful compactification of $G/H$ as in Terminology \ref{term} (10),(11), and (12). Since $\u{\X}$ is simple, denote by $\uY \subset \u{\X}$ its unique closed orbit. 

If $\uY$ is $G$-separable, we obtain a log smooth variety $\X=(\u{\X},\M)$ with the $G$-action by Proposition \ref{prop:log-homogeneous}. Denote by $\uX_1,\cdots,\uX_t$ the irreducible boundary components of $\X$, and $\Delta=\{\uD_1,\cdots,\uD_s\}$  the set of colors.  Let $\u{\X}_\circ := \u{\X}\backslash \Delta$ be the dense $B$-stable open subset, and $\X_\circ$ the corresponding log variety. The stabilizer group of $\u{\X}_\circ$ is a parabolic subgroup $Q$. Denote by $P := \cV^\vee\cap\Lambda$.
\end{notation}

\begin{lemma}\label{lem:chart}
Assume $\uY$ is $G$-separable. With the same notation as in Theorem \ref{thm:local-s}, for each $B$-eigenfunction $f_\lambda$ with $\lambda\in P$, we have the following:\begin{enumerate}
\item $f$ is regular on $\u{\X}_\circ$.
\item $p_2^*(f|_M)=f$, where $p_2:\u{\X}_\circ\cong Q^u\times M\to M$. 
\end{enumerate}
In particular, we obtain a chart
\begin{equation}\label{equ:wonder-chart}
P\to \M_{\circ}:=\M|_{\u{\X}_\circ}, \ \ \ \lambda \mapsto f_{\lambda}.
\end{equation}
\end{lemma}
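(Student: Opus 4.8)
The plan is to pull the whole statement back to the affine toric slice furnished by the local structure theorem (Theorem \ref{thm:local-s}) at the unique closed orbit $\uY$, and then to read off everything from the elementary description of functions and the divisorial log structure on an affine toric variety.

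\emph{Step 1: identify $\u{\X}_\circ$.} First I would check that $\u{\X}_\circ$ coincides with the open set $\u{\X}_{\uY,B}$ of Theorem \ref{thm:local-s}. The $B$-stable prime divisors of $\u{\X}$ are the $G$-stable boundary components $X_1,\dots,X_t$ together with the colors $D_1,\dots,D_s$. Since $\u{\X}$ is wonderful, hence toroidal, no color contains a $G$-orbit, so no $D_i$ contains $\uY$; on the other hand each $X_i$ contains $\uY$, as $\uY$ is the unique closed orbit and therefore lies in the closure of every orbit. Hence the $B$-stable primes not containing $\uY$ are exactly the colors, and $\u{\X}_{\uY,B}=\u{\X}\setminus\bigcup_i D_i=\u{\X}_\circ$. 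Because $\uY$ is assumed $G$-separable, Theorem \ref{thm:local-s}(3) makes the action map $\mu: Q^u\times M \xrightarrow{\sim}\u{\X}_\circ$ an isomorphism, while Theorem \ref{thm:local-s}(4) identifies $M$ with the affine toric embedding of $A:=L/L_0$ attached to the colored cone of $\uY$; for the wonderful compactification this cone is $(\cV,\emptyset)$, so $M\cong\Spec\kk[\cV^\vee\cap\Lambda]=\Spec\kk[P]$, the identification $\fX^*(A)\cong\Lambda$ being induced by restriction of $B$-eigenfunctions to $M$.

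\emph{Step 2: the two assertions.} Since $f_\lambda$ is $B$-semiinvariant it is invariant under $U=B^u$, and as $B\subset Q$ the unipotent radical $Q^u$ lies in $U$; hence $f_\lambda$ is $Q^u$-invariant. Under $\mu$ the $Q^u$-action is left translation on the first factor, which acts freely and transitively there, so a $Q^u$-invariant function on $Q^u\times M$ is the $p_2$-pullback of its restriction to $\{e\}\times M$; applying this to $\mu^*f_\lambda$ gives $f_\lambda=p_2^*(f_\lambda|_M)$, which is (2) — once one knows the distinguished point $o$ may be taken inside $M$, which holds because $B=Q^u\rtimes(B\cap L)$ and $B\cap L$ acts on $M$ through $A$, so the dense $B$-orbit of $Q^u\times M$ is $Q^u\times(\text{open }A\text{-orbit of }M)$ and one picks $o$ in its slice $\{e\}\times(A\text{-orbit})$. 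For (1): under $\fX^*(A)\cong\Lambda$ the restriction $f_\lambda|_M$ is the character $\chi^\lambda$ of $A$, and $\lambda\in P=\cV^\vee\cap\Lambda$ is precisely the monoid of characters of $A$ extending to regular functions on $\Spec\kk[P]=M$; hence $f_\lambda|_M$ is regular on $M$ and $f_\lambda=p_2^*\chi^\lambda$ is regular on $Q^u\times M=\u{\X}_\circ$.

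\emph{Step 3: the chart.} By Theorem \ref{thm:local-s}(5) the $G$-orbits of $\u{\X}_\circ$ correspond to the $A$-orbits of $M$, so $\bigl(\u{\X}\setminus(G/H)\bigr)\cap\u{\X}_\circ$ corresponds under $\mu$ to $Q^u$ times the toric boundary of $M$; consequently the restricted log structure $\M_\circ$ is $p_2^*\M_M$, where $\M_M$ is the divisorial (toric) log structure on $M$, which carries the standard chart $P\to\M_M$, $\lambda\mapsto\chi^\lambda$. By Steps 1--2 the assignment $\lambda\mapsto f_\lambda=p_2^*\chi^\lambda$ is exactly the $p_2$-pullback of this chart, and it is a monoid homomorphism because $B$-eigenfunctions multiply and the normalization $f_\lambda(o)=1$ is multiplicative; hence it is a chart for $\M_\circ$, as asserted in \eqref{equ:wonder-chart}. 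I expect the only non-formal point to be the bookkeeping around the base point: arranging that $o$ lies in the toric slice $M$ and that the identification $\fX^*(A)\cong\Lambda$ of Theorem \ref{thm:local-s}(4) is normalized compatibly with $f_\lambda(o)=1$, so that the exact equality $f_\lambda=p_2^*\chi^\lambda$ (rather than mere proportionality) holds; everything else reduces to the elementary theory of affine toric varieties.
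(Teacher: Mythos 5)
Your proof is correct and follows essentially the same route as the paper: part (2) is exactly the paper's argument (a $B$-eigenfunction is $Q^u$-invariant because the unipotent subgroup $Q^u\subset B$ admits only the trivial character), while part (1), which the paper simply cites from Knop's Theorem~2.5, you derive directly from the identification $M\cong\Spec\kk[P]$ supplied by the local structure theorem. The bookkeeping you flag at the end (placing $o$ in the slice $\{e\}\times M$ and normalizing $f_\lambda(o)=1$) is precisely what Theorem \ref{thm:local-s}(2),(4) and the conventions of Section \ref{not:spherical} provide, so there is no gap.
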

\begin{proof} 
(1) is proved in \cite[Theorem 2.5]{Knop}. For (2), it suffices to show that each $B$-eigenfunction is constant on $Q^u\times \{m\}$ for every $m\in M$. This follows from the fact that $Q^u$ is a unipotent subgroup of $B$ which only has the trivial character.
\end{proof} 

\begin{proposition}\label{prop:wonderful-DF}
Assume $\uY$ is $G$-separable. There exists a global chart 
$\beta:  P \to \ocM$
induced by (\ref{equ:wonder-chart}), whose restriction to the unique center $Y \subset \X$ induces the distinguished chart  as in Definition (\ref{def:DF-center}).
\end{proposition}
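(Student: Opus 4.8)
The plan is to produce the global chart in two stages: first promote the $B$-stable local chart (\ref{equ:wonder-chart}) on $\uX_\circ$ to a chart defined on all of $\uX$ by spreading it around using the $G$-action, then check that the resulting map $\beta\colon P\to\ocM$ is genuinely a global chart of the DF log structure and that it restricts to a distinguished chart on the unique center $Y$. For the first stage I would use Lemma \ref{lem:closed-orbit} (together with the local structure theorem, Theorem \ref{thm:local-s}), which tells us that $\uX$ is covered by the $G$-translates $g\cdot\uX_\circ$ of the open affine $\uX_\circ$, since $\uX_\circ=\uX_{\uY,B}$ for the unique closed orbit $\uY$ (here one uses that $\Delta$ is exactly the set of colors, all of which miss $\uY$ because $\uX$ is toroidal). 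On each translate $g\cdot\uX_\circ$ we transport the chart (\ref{equ:wonder-chart}) by the isomorphism $g\colon\uX_\circ\xrightarrow{\sim}g\cdot\uX_\circ$; the point is that $\ocM$ carries a $G$-equivariant structure by Definition \ref{def:G-log} and Proposition \ref{prop:log-homogeneous}, so $g$ acts on sections of $\ocM$, and the images $g\cdot\overline{f_\lambda}\in\ocM(g\cdot\uX_\circ)$ of the reductions $\overline{f_\lambda}$ assemble a candidate section on the cover.

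The heart of the argument is to show these locally-transported sections glue to a single global map $P\to\ocM(\uX)=\ocM$. Because $\ocM$ is a sheaf, it suffices to check that on overlaps $g\cdot\uX_\circ\cap g'\cdot\uX_\circ$ the two transported copies of $\overline{f_\lambda}$ agree. I expect this is where the real content lies, and I would handle it by reducing to the value of $\overline{f_\lambda}$ on each stratum: $\ocM$ is a locally constant (indeed, on $\uX_\circ$, globally constant via $\beta$) sheaf along each $G$-orbit stratum, and the $G$-equivariance forces $g\cdot\overline{f_\lambda}$ to be $G$-invariant as a section of $\ocM$ restricted to any orbit. Concretely, $f_\lambda$ itself is a $B$-eigenfunction, so its divisor $\operatorname{div}(f_\lambda)$ is a $B$-stable, hence $G$-stable combination of the boundary components $X_1,\dots,X_t$ (the colors cancel on $G/H$ by the choice $\lambda\in P=\cV^\vee\cap\Lambda$, so only $G$-invariant valuations contribute); therefore the section of $\ocM$ it defines is intrinsically described by this $G$-stable divisorial data and is independent of which translate we used to compute it. This identifies the value: $\beta(\lambda)$ is the class in $\ocM_{X,x}$ determined by $\sum_i \langle\lambda,\rho_{X_i}\rangle\,[X_i]$, where $\rho_{X_i}\in N$ is the primitive ray generator of the colored fan facet corresponding to $X_i$. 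Once glued, $\beta$ locally lifts to a chart of $\cM$ because (\ref{equ:wonder-chart}) does and the $G$-translates of an honest chart are honest charts; hence $\cM$ is DF with global chart $\beta$.

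Finally, for the claim about the center $Y\subset\X$: $Y$ is the unique closed orbit $\uY$ with its pulled-back log structure, and by Lemma \ref{lem:center-DF} (using that $\uY$ is proper and — via the local structure theorem, whose toric model $\uM$ is the affine toric variety attached to the full-dimensional cone $\cV$ — separably rationally connected, being covered by $Q^u$-translates of a toric variety) the sheaf $\ocM_Y$ is globally constant. To see that the restriction of $\beta$ is distinguished in the sense of Definition \ref{def:DF-center}, I must check that the composition $P\to\cM_X|_Y\to\ocM_{X,x}$ is an isomorphism at a point $x\in Y$ lying in $\uX_\circ$. Such an $x$ exists (the open cell meets the closed orbit), and there the local structure theorem identifies an étale neighborhood of $x$ with $Q^u\times\uM$ where $\uM$ is the affine toric variety of the cone $\cV$; under the identification $\Lambda\cong\fX^*(A)$ of Theorem \ref{thm:local-s}(4), the chart (\ref{equ:wonder-chart}) becomes exactly the tautological chart $P=\cV^\vee\cap\Lambda\to\ocM_{\uM}$ of that affine toric variety at its torus-fixed point, which reduces to an isomorphism onto $\ocM_{X,x}\cong\cV^\vee\cap\Lambda$ of the $t$-dimensional stalk. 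So $\beta|_Y$ is distinguished. The main obstacle, as indicated, is the gluing/well-definedness on overlaps; the rest is an unwinding of the local structure theorem and of Lemma \ref{lem:center-DF}.
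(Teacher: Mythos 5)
Your proposal follows essentially the same route as the paper: cover $\uX$ by the $G$-translates $g\uX_\circ$, transport the chart (\ref{equ:wonder-chart}) by the $G$-action, and glue by observing that on an overlap the quotient $gf_\lambda/f_\lambda$ has divisor supported only on the $G$-stable boundary divisors, where the valuations are $G$-invariant, hence is invertible by normality; the identification of the stalk at a point of $Y\cap\uX_\circ$ via the local structure theorem is exactly how the distinguished-chart claim is verified. One phrasing to tighten: the colors do not ``cancel'' because $\lambda\in\cV^\vee\cap\Lambda$ (that condition only gives $\nu_{X_i}(f_\lambda)\ge 0$, i.e.\ regularity on $\uX_\circ$); rather, the colors and their translates are simply removed from the overlap $\uX_\circ\cap g\uX_\circ$, which is why only the $G$-invariant valuations contribute there.
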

\begin{proof} 
We first notice that the sheaf of \R{monoids} $\ocM|_{\uY}$ is a globally constant sheaf of \R{monoids} in $P$. Thus, it suffices to construct a global chart  $\underline{P} \to \overline{\M}:=\M/\O^*$. 

By Lemma \ref{lem:closed-orbit}, $\u{\X}$ is covered by $\{g\u{\X}_\circ\}_{\{g\in G\}}$. For each open subset $g\u{\X}_\circ$, we construct the morphism $\u{P}\to \o{\M}|_{g\u{\X}_\circ}$ sending $\lambda$ to $gf_\lambda$ similar to (\ref{equ:wonder-chart}). To check \R{that} this local morphism gives a global morphism from $\uP$ to $\overline{\M}$, it suffices to show that $gf_\lambda$ and $f_\lambda$ differ by an invertible function on the common intersection. Consider the rational function $gf_\lambda/f_\lambda$ on $\u{\X}_\circ\cap g\u{\X}_\circ$. By Lemma \ref{lem:chart}, it has nontrivial zeros or poles only at the boundary divisors $\uX_i\cap\u{\X}_\circ\cap g\u{\X}_\circ$'s. Since $\u{\X}_\circ$ is normal, and each prime boundary divisor corresponds to a $G$-invariant valuation, $gf_\lambda/f_\lambda$ is invertible on $\u{\X}_\circ\cap g\u{\X}_\circ$. 
\end{proof} 

The above global chart $\beta$ is natural in the following sense:

\begin{proposition}\label{prop:dual-pic-map}
Assume $\uY$ is $G$-separable, and consider the morphism as in  (\ref{equ:DF-line-bundle})\Yi{need to change; Qile: changed!}
\[L:\Lambda\cong P^{gp}\to \pic \u{\X}\]
induced by the global chart $\beta$. Recall that in this situation, the group $\pic\u{\X}$  is freely generated by the colors by \cite[Theorem 1]{Brion-onecycle}. Then we have
\begin{enumerate}
 \item $L(\lambda)=\O_{\u{\X}}(\sum_{i=1}^t\nu_{\uX_i}(f_\lambda)\uX_i)$;
 \item the map $-L^\vee$ is the valuation morphism on colors $\rho|_\Delta:\Z^\Delta\to N$ as in Terminology \ref{term} (8).
\end{enumerate}
\end{proposition}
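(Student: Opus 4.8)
The plan is to establish the two identities separately, both by reducing to a divisorial/valuative computation. For (1): by construction of $\beta$ in Proposition \ref{prop:wonderful-DF}, the element $L(\lambda) = L_\beta(\lambda)$ is the class of the $\cO^*$-torsor obtained from $q^{-1}(\beta(\lambda))$, equivalently the line bundle whose sections are locally generated by $f_\lambda^{-1}$ on the locus where $\beta(\lambda)$ is trivial in $\ocM$. The divisor of $f_\lambda$ as a rational function on $\u\X$ is supported on $B$-stable prime divisors; by Lemma \ref{lem:chart}(1) it has no zeros or poles along the colors $D_i$ (since $f_\lambda$ is regular and nonvanishing on $\u\X_\circ = \u\X \setminus \Delta$), so $\operatorname{div}(f_\lambda) = \sum_{i=1}^t \nu_{X_i}(f_\lambda) X_i$ is supported on the $G$-stable boundary divisors $X_i$. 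The standard dictionary between the monoid chart and the associated line bundles (Proposition \ref{prop:DF-line-bundle}, applied to the DF log structure of the simple normal crossings resolution, or directly on $\u\X$ since $f_\lambda$ cuts out $\ocM$-generators along each $X_i$ with multiplicity $\nu_{X_i}(f_\lambda)$) then identifies $L(\lambda)$ with $\cO_{\u\X}(\sum_i \nu_{X_i}(f_\lambda) X_i)$.

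For (2): by Definition \ref{def:log-one-cycle}, for $\delta \in \ZZ^\Delta$, say $\delta$ corresponds to a curve class, the pairing $L^\vee(\delta)$ is computed by intersecting with colors; more precisely, $L^\vee \colon \ZZ^\Delta \to P^{gp \vee} = N$ is the dual of $L$ composed with the identification $\ZZ^\Delta \cong \pic \u\X$ coming from the fact that the colors freely generate $\pic \u\X$ (\cite[Theorem 1]{Brion-onecycle}). Under this identification, the coefficient of $D_i$ in $L(\lambda)$, when we rewrite $\cO_{\u\X}(\sum_j \nu_{X_j}(f_\lambda) X_j)$ in the basis of colors, is exactly $-\langle \rho_{D_i}, \lambda\rangle$: this is the classical computation that the class of a $G$-stable divisor $X_j$ in the color basis records the images $\rho_{D_i}$ of the colors' valuations evaluated against $\Lambda$, equivalently the relation in $\pic$ expressing $\sum_j \nu_{X_j}(f_\lambda)X_j - \operatorname{div}(f_\lambda) = 0$ with $\operatorname{div}(f_\lambda)$ also having color components $\nu_{D_i}(f_\lambda) = \langle \rho_{D_i}, \lambda\rangle$. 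Tracking the sign through the dualization $\delta \mapsto (\lambda \mapsto \deg L(\lambda)|_\delta)$ gives $-L^\vee = \rho|_\Delta$.

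I would organize the argument as: first fix notation recalling that $\operatorname{div}(f_\lambda) = \sum_i \nu_{X_i}(f_\lambda)X_i + \sum_i \langle\rho_{D_i},\lambda\rangle D_i$ holds as principal divisors (the color coefficients being $\nu_{D_i}(f_\lambda) = \langle\rho_{D_i},\lambda\rangle$ by definition of the valuation map in Section \ref{not:spherical}(8)); second, prove (1) by showing that locally along each $X_i$ the function $f_\lambda$ generates the $\ocM$-stalk to order $\nu_{X_i}(f_\lambda)$, which is precisely the defining property of the chart $\beta$, so $L_\beta(\lambda)$ has the asserted class — here the only subtlety is matching the dual/inverse conventions in \eqref{equ:DF-line-bundle}, which forces the line bundle to be $\cO(\sum \nu_{X_i}(f_\lambda)X_i)$ rather than its inverse; third, deduce (2) by dualizing and using that in $\pic\u\X$ one has the relation $\sum_i \nu_{X_i}(f_\lambda) X_i = -\sum_i \langle\rho_{D_i},\lambda\rangle D_i$ (since $\operatorname{div}(f_\lambda) = 0$ in $\pic$), so that, reading off the $D_i$-coordinate of $L(\lambda)$ in the free basis of colors, $L^\vee$ sends the $i$-th basis vector of $\ZZ^\Delta$ to $-\rho_{D_i}$.

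\textbf{Main obstacle.} The real content is keeping the signs and dualities straight: the map \eqref{equ:DF-line-bundle} involves a dual $(\,\cdot\,)^\vee$, the definition of $\co_\beta$ in Definition \ref{def:log-one-cycle} involves a degree, the valuation map $\rho|_\Delta$ has its own sign convention, and the passage from the $G$-stable boundary basis of $\pic\u\X$ to the color basis introduces the minus sign. Pinning down that these combine to give exactly $-L^\vee = \rho|_\Delta$ (and not $L^\vee = \rho|_\Delta$ or a transpose thereof) is where I expect to spend the most care; the underlying geometry — that $f_\lambda$ is a unit off the $X_i$ and has prescribed order $\nu_{X_i}(f_\lambda)$ along each — is straightforward given Lemma \ref{lem:chart} and the local structure theorem.
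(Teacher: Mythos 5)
Your route is the same as the paper's: part (1) is read off from the construction of the global chart in Proposition \ref{prop:wonderful-DF}, and part (2) follows by writing $\operatorname{div}(f_\lambda)=\sum_i\nu_{X_i}(f_\lambda)X_i+\sum_j\nu_{D_j}(f_\lambda)D_j$ and using that the colors freely generate $\pic\u{\X}$; your sign bookkeeping comes out right. One correction to the first paragraph: it is false that $\operatorname{div}(f_\lambda)$ ``has no zeros or poles along the colors'' --- $\u{\X}_\circ$ is the \emph{complement} of the colors, so Lemma \ref{lem:chart}(1) says nothing about the order of $f_\lambda$ along the $D_j$, and indeed $\nu_{D_j}(f_\lambda)=\langle\rho_{D_j},\lambda\rangle$ is generally nonzero (otherwise part (2) would be vacuous, and taken literally your claim would force $L(\lambda)$ to be the trivial class since $\operatorname{div}(f_\lambda)$ is principal). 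You contradict and correct this yourself in the later paragraphs, where the full divisor formula with color components is what you actually use; the statement that is true and that (1) needs is that the \emph{local chart generator} $gf_\lambda$ on $g\u{\X}_\circ$ is a unit at the generic point of each color it meets and vanishes to order $\nu_{X_i}(f_\lambda)$ along each $X_i$. With that sentence repaired, the argument is correct and matches the paper's.
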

\begin{proof}
The first statement follows from the proof of Proposition \ref{prop:wonderful-DF}. %The second one is a consequence of the following equality: %\Qile{reference for this equality, or a proof?}
Since each eigenfunction $f_\lambda$ has only poles or zeros along $\uX_i$'s or $\uD_i$'s, we have:\[\text{div} f_\lambda=\sum_{i=1}^t \nu_{\uX_i}(f_\lambda)\uX_i+\sum_{j=1}^s \nu_{\uD_j}(f_\lambda)\uD_j.\]
Thus we have the second statement.
\end{proof}

\subsection{$\A^1$-curves on wonderful compactifications}\label{ss:A1-on-wonderful}
We next introduce two \R{technical} assumptions, which greatly simplifies the structure of the set of $\AA^1$-curve classes on $\X$. Later we will verify those two assumptions for a large class of interesting cases.

\begin{hyp}\label{hyp:knop}
There exists $B$-invariant irreducible rational curves 
\[\uB_1,\cdots,\uB_s\] 
on $\uY$ such that $\uB_i \cap \uD_j=\delta_{ij}$ and $\NE(\u{\X})=\N \langle \uB_1,\cdots,\uB_s \rangle$.
\end{hyp}

\R{
By Proposition \ref{prop:wonderful-DF}, the following is compatible with Definition \ref{def:log-one-cycle}.
\begin{definition}\label{def:log-one-cycle-X}
	The \emph{total contact order} of a curve class $F \in N_1(\u\X)$ is the element $\co(F)\in (P^{gp})^\vee$ defined by 
	\begin{equation}
		\co(F)(\delta) = c_1(L_{\beta}(\delta)) \cdot F, \ \ \ \mbox{for any }\delta \in P^{gp}.
	\end{equation}
 A curve class $F$ is \emph{log admissible} if $\co(F) \in P^\vee$. Denote by $\NE(\X) \subset \NE(\u\X)$ the semi-group of log admissible effective one-cycles on $\uY$.\Qile{Definition move to here from Section 3!}
\end{definition}
%Denote by $\NE(Y) \subset \NE(\uY)$ the semi-group of log admissible effective one-cycles on $\uY$.
}

%\begin{construction}\label{con:one-cycle}By \cite[Thm. 2]{Brion-onecycle}, there exists $B$-invariant irreducible rational curves $B_1,\cdots,B_s$ on $Y$ such that $(B_i.D_j)=d_i\delta_{ij}$. Let $d:\Z^\Delta\to\Z^\Delta$ be the diagonal map $(d_1,\cdots,d_s)$. Consider the following map:\[c|_B=d\circ(-\rho|_\Delta):\Z^\Delta \to \Z^\Delta\to N.\]\end{construction}

\begin{lemma} \label{lem:back-to-log-class}
Assume $\uY$ is $G$-separable, and \R{that} Hypothesis \ref{hyp:knop} holds. For $i=1,\cdots,s$, we have
\begin{enumerate}
 \item $\deg L(\lambda)|_{\uB_i}=-\nu_{\uD_i}(f_\lambda)$,  and hence $\co(\uB_i)=-\nu_{\uD_i}=-\rho|_\Delta(\uD_i)$ with $\co$ introduced in Definition \ref{def:log-one-cycle};

 \item $\NE(X)=\{F=\sum_{i=1}^s a_i\uB_i \ |\ a_i\in \ZZ_{\geq 0}, -\sum_{i=1}^s a_i\rho|_\Delta(\uD_i)\in \cV\}$;\Qile{Was $\NE(\X)$ instead of $\NE(Y)$.}

% \item $\TC(\X)=-\rho|_\Delta(\N^s)\cap \cV$.
\end{enumerate}
\end{lemma}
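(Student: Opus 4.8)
The plan is to deduce (1) from Proposition~\ref{prop:dual-pic-map}, then to identify the dual monoid $P^\vee$ with $\cV\cap N_\Z$, and finally to read off (2) and (3) directly from the definitions of $\NE_\beta(\X)$ and $\TC_\beta(\X)$. Throughout, $\co$ denotes the contact order $\co_\beta$ of Definition~\ref{def:log-one-cycle} attached to the global chart $\beta$ of Proposition~\ref{prop:wonderful-DF}.

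For (1): by Proposition~\ref{prop:dual-pic-map}(1) we have $L(\lambda)=\O_{\u\X}\bigl(\sum_{k=1}^t\nu_{X_k}(f_\lambda)X_k\bigr)$, whereas $\operatorname{div}(f_\lambda)=\sum_{k=1}^t\nu_{X_k}(f_\lambda)X_k+\sum_{j=1}^s\nu_{D_j}(f_\lambda)D_j$ is a principal divisor, hence has degree zero on every proper curve. Restricting to $\uB_i$ and taking degrees therefore yields $\deg L(\lambda)|_{\uB_i}=-\sum_{j=1}^s\nu_{D_j}(f_\lambda)\,(D_j\cdot\uB_i)$. Since $\u\X$ is toroidal, no color $D_j$ contains a $G$-orbit in its closure, so in particular $\uY\not\subset D_j$ and hence $\uB_i\not\subset D_j$; thus the intersection number $(D_j\cdot\uB_i)$ is the well-defined nonnegative integer $\uB_i\cap D_j=\delta_{ij}$ of Hypothesis~\ref{hyp:knop}. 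We conclude $\deg L(\lambda)|_{\uB_i}=-\nu_{D_i}(f_\lambda)$ for all $\lambda$, which by Definition~\ref{def:log-one-cycle} says $\co(\uB_i)(\lambda)=-\nu_{D_i}(f_\lambda)=-\rho_{\nu_{D_i}}(\lambda)$, i.e. $\co(\uB_i)=-\nu_{D_i}=-\rho|_\Delta(D_i)$ in $(P^{gp})^\vee=\Lambda^\vee=N_\Z$.

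Next I record the dual-cone computation. Because $\cV$ is a strictly convex, full-dimensional, rational polyhedral cone in $N$, its dual $\cV^\vee\subset\Lambda_\Q$ is again strictly convex, full-dimensional and rational polyhedral, and $P=\cV^\vee\cap\Lambda$ generates $\cV^\vee$ as a cone. Hence $P^{gp}=\Lambda$, and by biduality of rational polyhedral cones the dual monoid is $P^\vee=(\cV^\vee)^\vee\cap N_\Z=\cV\cap N_\Z$ inside $(P^{gp})^\vee=N_\Z$.

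Finally, (2) and (3) follow by unwinding definitions. By Hypothesis~\ref{hyp:knop} every $F\in\NE(\u\X)$ is uniquely $F=\sum_{i=1}^s a_i\uB_i$ with $a_i\in\ZZ_{\geq0}$, and by additivity of $\co$ together with (1), $\co(F)=-\sum_{i=1}^s a_i\rho|_\Delta(D_i)$, which automatically lies in $N_\Z$. By Definition~\ref{def:log-one-cycle} and Proposition-Definition~\ref{pd:log-admissible}, $F\in\NE(\X)$ iff $\co(F)\in P^\vee=\cV\cap N_\Z$, i.e. iff $-\sum_i a_i\rho|_\Delta(D_i)\in\cV$; this is (2). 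Then $\TC(\X)=\co(\NE(\X))=\bigl\{-\sum_{i=1}^s a_i\rho|_\Delta(D_i)\ :\ a_i\in\ZZ_{\geq0}\bigr\}\cap\cV=-\rho|_\Delta(\NN^s)\cap\cV$, which is (3). The two steps requiring any care are the first — where toroidality is essential, guaranteeing that the colors meet $\uB_i$ with precisely the prescribed multiplicities so that the principal-divisor computation isolates $\nu_{D_i}$ — and the biduality identification $P^\vee=\cV\cap N_\Z$; the rest is formal bookkeeping.
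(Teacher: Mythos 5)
Your proof is correct and follows the same route as the paper, which simply cites Proposition \ref{prop:dual-pic-map}; you have filled in the details the paper leaves implicit (the degree-zero principal divisor $\operatorname{div}(f_\lambda)$ combined with $\uB_i\cap D_j=\delta_{ij}$ for part (1), and the biduality $P^\vee=\cV\cap N_\ZZ$ for parts (2) and (3)). No gaps.
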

\begin{proof} 
This follows from Proposition \ref{prop:dual-pic-map}.
\end{proof}

We further consider the following:
\begin{hyp}\label{hyp:cone}
The opposite of the valuation cone $-\cV$ is contained in the $\QQ$-cone generated by $\rho(\uD_i)$, for $i=1,\cdots,s$. 
\end{hyp}

\begin{lemma}\label{lem:torsion-free}
Assume Hypothesis \ref{hyp:cone} holds. Then the morphism $L:\Lambda\to \pic(\X)$ is injective. Furthermore, the following are equivalent:
\begin{enumerate}
%\item $-\rho|_\Delta(\N^s)\cap\cV=\Lambda^\vee\cap\cV$; %\Qile{Why this one equivalent to the other two?}
\item $-\rho|_\Delta: \Z^\Delta\to \Lambda^{\vee}$ is surjective;
\item The cokernel of $L:\Lambda\to \pic(\u{\X})\cong\Z^\Delta$ is tosion-free.
\end{enumerate}
\end{lemma}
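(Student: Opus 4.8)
The plan is to work with the exact sequence
\[
0 \to \Lambda \xrightarrow{L} \pic(\u{\X}) \cong \ZZ^{\Delta} \to \coker(L) \to 0,
\]
and to use Proposition \ref{prop:dual-pic-map}(2), which identifies $-L^{\vee}$ with the valuation map $\rho|_{\Delta}\colon \ZZ^{\Delta} \to N = \Lambda^{\vee}$. First I would establish injectivity of $L$. By Proposition \ref{prop:dual-pic-map}(2), $\rho|_{\Delta}$ has image spanning $N_{\QQ}$: Hypothesis \ref{hyp:cone} says $-\cV$ lies in the $\QQ$-cone generated by the $\rho(D_i)$, and since $\uY$ is $G$-separable with $H$ sober, the valuation cone $\cV$ is full-dimensional in $N$ (Section \ref{not:spherical}(11)), so the $\rho(D_i)$ span $N_{\QQ}$. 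Hence $\rho|_{\Delta}\otimes\QQ$ is surjective, so $(L^{\vee})\otimes\QQ$ is surjective, so $L\otimes\QQ$ is injective, which gives injectivity of $L$ since $\Lambda$ is torsion-free.

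For the equivalence, I would dualize. Applying $\hom(-,\ZZ)$ to the sequence above and using $\Lambda^{\vee} = N$, $\pic(\u{\X})^{\vee} = (\ZZ^{\Delta})^{\vee} = \ZZ^{\Delta}$ (the colors form a basis), we get the exact sequence
\[
0 \to \hom(\coker L,\ZZ) \to \ZZ^{\Delta} \xrightarrow{-\rho|_{\Delta}} N \to \ext^1(\coker L,\ZZ) \to 0,
\]
using that $\ext^1(\pic\u{\X},\ZZ)=0$ since $\pic\u{\X}$ is free, and that $L^{\vee}=-\rho|_{\Delta}$. For a finitely generated abelian group $A$, $\ext^1(A,\ZZ)$ is the torsion subgroup of $A$ and $\hom(A,\ZZ)$ is free. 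So $\ext^1(\coker L,\ZZ)=0$ is equivalent to $\coker L$ being torsion-free, which is condition (2); and from the displayed sequence, $\ext^1(\coker L,\ZZ)=0$ is equivalent to $-\rho|_{\Delta}$ being surjective, which is condition (1). This gives the equivalence.

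The main technical point to get right is the cokernel-of-$L^{\vee}$ bookkeeping: one must be careful that $-\rho|_{\Delta}$ is exactly $L^{\vee}$ (not $L$ or its negative composed with something), which is precisely the content of Proposition \ref{prop:dual-pic-map}(2), and that the identification $\pic(\u{\X})^{\vee}\cong\ZZ^{\Delta}$ is the one dual to the basis of colors from \cite[Theorem 1]{Brion-onecycle}. Once the sequence above is correctly set up, the rest is the standard fact that for finitely generated abelian groups a map $f\colon M \to N'$ of free groups of the same rank (here guaranteed by the rank count, using that $\pic\u{\X}$ has rank $|\Delta| = \dim N$ by the injectivity just proved together with Hypothesis \ref{hyp:cone}) has $\coker f$ torsion-free if and only if $f^{\vee}$ is surjective if and only if $\coker f^{\vee}=0$. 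I do not expect a serious obstacle beyond this bookkeeping, since Hypothesis \ref{hyp:cone} has already been used to pin down the rank.
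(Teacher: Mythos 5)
Your proof is correct and takes essentially the same route as the paper's: injectivity follows because $L^{\vee}=-\rho|_{\Delta}$ has finite cokernel (Hypothesis \ref{hyp:cone} together with the full-dimensionality of $\cV$ from Section \ref{not:spherical}(11)), and the equivalence of (1) and (2) is the standard dualization of $0\to\Lambda\to\ZZ^{\Delta}\to\operatorname{coker}L\to 0$, which gives $\operatorname{coker}(-\rho|_{\Delta})\cong\ext^1(\operatorname{coker}L,\ZZ)\cong(\operatorname{coker}L)_{\mathrm{tors}}$. The only blemish is the closing parenthetical claiming $\rank\pic\u{\X}=|\Delta|=\dim N$: that equality is neither implied by injectivity plus Hypothesis \ref{hyp:cone} nor needed, since the Ext argument in your second paragraph already works when $|\Delta|>\rank\Lambda$, so that remark should simply be deleted.
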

\begin{proof}
Since $\cV$ is a strictly convex cone in $N$, see Terminology \ref{term} (11), the injectivity follows from Proposition \ref{prop:dual-pic-map}, and the observation  that the cokernel of $-\rho|_\Delta$ is finite. The rest of the statement is a direct consequence of the injectivity. %\Qile{Note that (1) and (2) is not equivalent! A counterexample is when $\cV$ is the positive axis, and $D_1, D_2$ mapped to $2, 3$.}
\end{proof}

We summarize the discussion as follows:
\begin{thm}\label{thm:main-wonderful}
Let $\X$ be the log smooth variety associated to the wonderful compactification of a sober separably spherical homogeneous space $G/H$, \R{containing a} unique $G$-separable closed orbit, see Proposition \ref{prop:log-homogeneous}. Assume Hypothesis \ref{hyp:knop} and \ref{hyp:cone} hold. Then we have:
\begin{enumerate}
\item Any log-admissible effective curve class on $\u{\X}$ can be represented by a free \R{log rational} curve on $\X$;\Qile{was a free $\A^1$-curve on $G/H$, changde!} 

%{\red In particular, this implies $\NE_{\A^1}(\X) = \NE(\X)$, see Definition \ref{def:A^1-contact}.}

\item  If $\ch \kk \nmid [\Lambda^{\vee} : \rho|_\Delta(\ZZ^{\Delta})]$, then the unique $G$-closed orbit is a fully free center of $\X$. In particular, $\X$ is separably $\A^1$-connected. %\Qile{Check if the new formulation is correct.}
\item Furthermore, the center of $\X$ is \R{primitively free} if and only if the cokernel of $L:\Lambda\to \pic(\X)\cong\Z^\Delta$ is tosion-free.
\end{enumerate}
\end{thm}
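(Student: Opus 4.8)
The plan is to package the geometric input produced in Sections \ref{sec:comb}--\ref{ss:wonderful} and feed it into Proposition \ref{prop:lift-center-rat} and Theorem \ref{thm:comb-smoothing}. First I would record the setup: by Proposition \ref{prop:log-homogeneous} the pair $(\u\X, \u\X\setminus G/H)$ is toroidal and gives a log smooth log $G$-variety $\X$ whose unique center is $Y$, and by Proposition \ref{prop:wonderful-DF} the global chart $\beta\colon P\to\ocM$ restricts to a distinguished chart on $Y$, with $P=\cV^\vee\cap\Lambda$. So $Y$ is a DF-type center with a distinguished chart, which is precisely the hypothesis needed in Proposition \ref{prop:lift-center-rat}(1).

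For part (1), I would argue as follows. Take any log-admissible effective one-cycle $F$ on $\u\X$; by Lemma \ref{lem:back-to-log-class}(2) we may write $F=\sum a_i\uB_i$ with $a_i\ge 0$ and $\co_\beta(F)=-\sum a_i\rho|_\Delta(D_i)\in\cV=P^\vee\otimes\QQ$; since $\co_\beta(F)\in P^\vee$ by log-admissibility, Proposition \ref{prop:lift-center-rat}(2) (via Corollary \ref{cor:rat-lift}) lifts a suitable rational curve in $\uY$ in the class $\sum a_i\uB_i$ to an $\A^1$-curve. To get it into $G/H$ and make it free, I would invoke Theorem \ref{thm:comb-smoothing}: the teeth are free rational curves in $\uY$ (the $\uB_i$ themselves, or their smoothings, which are free since $\uY$ is homogeneous, hence its tangent bundle is globally generated), glued to a handle in the class $F$, and the smoothing moves the curve off the boundary. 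Some care is needed to preserve the prescribed contact order $\co_\beta(F)$ under smoothing; the point is that adding teeth supported on $Y$ only changes the class, and one chooses the combinatorics so that the total contact order equals $\co_\beta(F)$, using that $\fC_\beta(Y)$ is closed under addition (Lemma \ref{lem:add-center-curve}) and spans $P^\vee$. The trivial-contact-order case is an honest rational curve in $G/H$ by the same degeneration argument without the boundary constraint. The equality $\NE_{\A^1}(\X)=\NE(\X)$ follows since every log-admissible class is now realized by an $\A^1$-curve.

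For part (2), I would verify that $Y$ is a fully free center, i.e., that $\fC_\beta(Y)$ spans $P^\vee\otimes_\ZZ\kk$. By Lemma \ref{lem:back-to-log-class}(1), $\co_\beta(\uB_i)=-\rho|_\Delta(D_i)$, and under Hypothesis \ref{hyp:cone} the vectors $\rho|_\Delta(D_i)$ span $P^\vee\otimes\QQ$ (since $-\cV=P^\vee\otimes\QQ$ lies in their cone). Each $\uB_i$ is a $B$-invariant rational curve on $\uY$; I would argue it is free — or can be replaced by a free rational curve with the same contact order — using that $\uY$ is a separable closed $G$-orbit, hence a homogeneous space with globally generated tangent bundle, so all rational curves on it are free in characteristic prime to the relevant order; this is where the hypothesis $\ch\kk\nmid[\Lambda^\vee:\rho|_\Delta(\ZZ^\Delta)]$ enters, ensuring the smoothing/lifting stays separable. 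Then $\fC_\beta(Y)$ contains a spanning set of $P^\vee$, so $Y$ is fully free, and by Theorem \ref{thm:comb-smoothing} (combined with Proposition \ref{prop:A1-connected}) $\X$ is separably $\A^1$-connected.

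For part (3), primitivity of the center means $\fC_\beta(Y)$ spans the lattice $(P^{gp})^\vee=\Lambda^\vee$, not just the vector space. Since $\fC_\beta(Y)$ is the monoid generated (under Lemma \ref{lem:add-center-curve}) by the $\co_\beta(\uB_i)=-\rho|_\Delta(D_i)$ together with contact orders of arbitrary free rational curves on $\uY$, and since by Hypothesis \ref{hyp:knop} every effective class on $\u\X$ is a $\ZZ_{\ge0}$-combination of the $\uB_i$, the group generated by $\fC_\beta(Y)$ is exactly the image of $-\rho|_\Delta\colon\ZZ^\Delta\to\Lambda^\vee$. Hence primitivity is equivalent to surjectivity of $-\rho|_\Delta$, which by Lemma \ref{lem:torsion-free} is equivalent to the cokernel of $L\colon\Lambda\to\pic(\u\X)\cong\ZZ^\Delta$ being torsion-free. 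The main obstacle in the whole argument is the second paragraph: ensuring that the $\A^1$-curve produced by the comb smoothing has exactly the prescribed contact order $\co_\beta(F)$ (not merely some admissible contact order), which requires carefully bookkeeping the contact contributions of the teeth versus the handle and invoking that $\uY$ carries free rational curves realizing each generator of $\fC_\beta(Y)$.
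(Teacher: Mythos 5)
Your parts (2) and (3) follow the paper's route: fully freeness comes from the spanning of $P^{\vee}\otimes_{\ZZ}\kk$ by the classes $\co_{\beta}(\uB_i)=-\rho|_\Delta(D_i)$, then Theorem \ref{thm:comb-smoothing} gives $\A^1$-connectedness, and (3) is Lemma \ref{lem:torsion-free} plus Definition \ref{def:full-free}. One small correction there: the hypothesis $\ch\kk\nmid[\Lambda^{\vee}:\rho|_\Delta(\ZZ^{\Delta})]$ is not about separability of a smoothing; it is exactly what upgrades spanning of $P^{\vee}\otimes\QQ$ (which Hypothesis \ref{hyp:cone} gives for free) to spanning of $P^{\vee}\otimes_{\ZZ}\kk$ in characteristic $p$, as required by Definition \ref{def:full-free}.

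Part (1), however, has a genuine gap. You route the curve into $G/H$ via the comb smoothing of Theorem \ref{thm:comb-smoothing}, but that construction attaches teeth with nontrivial classes and nontrivial contact orders, so the smoothed $\A^1$-curve represents $F$ \emph{plus} the classes of the teeth --- you cannot recover the prescribed one-cycle $F$ this way, and the statement of (1) is about the class, not merely about some admissible contact order. (You flag the bookkeeping issue yourself but only for the contact order, and even there the teeth contribute nonzero amounts that cannot be cancelled.) Moreover, Theorem \ref{thm:comb-smoothing} requires the center to be fully free, which in positive characteristic dividing $[\Lambda^{\vee}:\rho|_\Delta(\ZZ^{\Delta})]$ is not available in part (1)'s generality. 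The missing idea is log homogeneity: by Proposition \ref{prop:log-homogeneous} the log tangent bundle $T_{\X}$ is a quotient of $\mathfrak{g}\otimes\cO_{\X}$, hence globally generated, so the $\A^1$-curve obtained from Proposition \ref{prop:lift-center-rat} over an irreducible rational curve of class $F$ in the homogeneous space $\uY$ is automatically free and unobstructed; a general deformation, in the \emph{same} class $F$, then sweeps out a dense open set and in particular meets the locus with trivial log structure $G/H$. No comb is needed for (1); the comb enters only in (2) to manufacture \emph{very free} curves.
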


\begin{proof} 
Hypothesis \ref{hyp:cone} implies that $\NE(Y)$ is not empty. For any element $F=\sum_{i=1}^s a_i\uB_i\in \NE(\X)$, there exists a rational curve $\u{f}:\P^1 \to \uY$ whose curve class is $F$ since $\uY$ is a homogeneous space. By Corollary \ref{cor:lift-center-rat}, there exists a log map $f$ lifting $\uf$ as \R{log rational} curve on $\X$. By Proposition \ref{prop:log-homogeneous}, $f$ is unobstructed in the moduli space of $\A^1$-curves,  $f$ can be deformed to a \R{log rational} curve on $\X$. This implies (1). 

\R{
For the second statement, we first choose a basis $F_i$ of $\rho|_\Delta(\ZZ^{\Delta})$ with $F_i\in NE(\X)$. Since each $F_i$ can be represented by a free rational curve on $\uY$, the assumption on characteristic of the base field $\kk$ implies that $Y$ is a fully free center. By Theorem \ref{thm:comb-smoothing}, $\X$ is $\A^1$-connected. Finally (3) is a direct consequence of Lemma \ref{lem:torsion-free} and Definition \ref{def:full-free}.}
\end{proof}

\subsection{The characteristic zero case}\label{ss:char-zero-wonderful}

In this section, we adopt the \R{notation of} Section \ref{ss:wonderful}, and assume $\ch\kk=0$. The goal is to give a proof of Theorem \ref{thm:char0} by verifying the assumptions in Theorem \ref{thm:main-wonderful}. We may assume $G$ is simply connected for what follows. 

%We first introduce a sufficient condition for Hypothesis \ref{hyp:knop}. Let 
%\[\lambda: \GG_m \to T\] 
%be a regular cocharacter. For each color $D \in \Delta$, it is proved in \cite[Theorem 2(1)]{Brion-onecycle} that there is a point $x_{D} \in \u{\X}$ fixed under the $\lambda$-action such that $D$ is the closure of 
%\[\{x \in \u{\X} \ | \ \lim_{t\to\infty} \lambda(t)\cdot x = x_D\}.\]
%We call such $x_D$ the {\em sink of $D$}.

%\begin{definition}\label{def:good}
%Notations as above, we say $\u{\X}$ is \emph{good} if there exists a regular cocharacter $\lambda$ such that $x_D \in \uY$ for every color $D$.
%\end{definition}

To verify Hypothesis \ref{hyp:knop}, we recall the idea of Luna on spherical closures.

\begin{definition}\cite[Section 6.1]{Luna}\label{def:spherically-closed} 
Let $K$ be a spherical subgroup of $G$. The automorphism group $N_G(K)/K$ naturally acts on $\Delta$. The \emph{spherical closure} $\overline{K}$ of $K$ is the kernel of the action of $N_{G}(K)$ on $\Delta$. We say that a subgroup $L$ is \emph{spherically closed} if $\overline{L}=L$.
\end{definition}

Recall the following facts from \cite[Remark 30.1]{Timashev}:
\begin{lemma}\label{lem:spherical-closure}
Let $\o{H}$ be the spherical closure of $H$, and $\X'$ be the log variety associated to the wonderful compactification of $G/\o{H}$. There is a $G$-equivariant morphism of log varieties $\pi:\X\to \X'$ such that: 
\begin{enumerate}
\item ${\u{\X}'}$ is smooth, see \cite[Corollary 7.6]{Knop96};
\item Let $\Lambda'$ and $\cV'$ be the weight lattice of $\X'$ and the valuation cone of $\X'$ respectively. We have that $\cV=\cV'$ and $\Lambda'\subset \Lambda$. In particular, $\pi$ is a finite log \'etale morphism (or equivalently Kummer \'etale), and $\pi|_{\uY}$ is an isomorphism onto the closed $G$-orbit $\uY' \subset \u{\X}'$;
\item the set of colors ${\Delta'}=\{{\uD'_1},\cdots,{\uD'_s}\}$ on ${\X'}$ is identified with $\Delta$ by pullback, i.e., $\pi^{-1}({\uD'_i})=\uD_i$.\qed
\end{enumerate}
\end{lemma}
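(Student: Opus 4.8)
The plan is to assemble the three assertions from the Luna--Vust theory of spherical embeddings together with Knop's results on spherical closures, so that the lemma amounts to bookkeeping with colored fans and toric charts; this is why it is phrased as recalling facts from \cite{Timashev}. Throughout we use $\ch\kk=0$.

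First I would produce $\pi$. Since $H\subset\o{H}$ with $\o{H}/H$ finite, the quotient $G/H\to G/\o{H}$ is a finite $G$-morphism; pulling back $B$-eigenfunctions identifies the weight lattices as a finite-index inclusion $\Lambda'\subset\Lambda$, hence the valuation spaces as $N\otimes\QQ=N'\otimes\QQ$. Under this identification the valuation cones coincide, $\cV=\cV'$, which is Knop's theorem on spherical closures (see \cite[Remark 30.1]{Timashev})---this is the one essential external input. Consequently the colored cone $(\cV,\emptyset)$ of the wonderful compactification $\u{\X}$ maps into the colored cone $(\cV',\emptyset)$ of $\u{\X}'$, so by the functoriality of morphisms of spherical embeddings \cite{Knop} the map $G/H\to G/\o{H}$ extends to a $G$-morphism $\pi\colon\u{\X}\to\u{\X}'$. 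It is a morphism of \emph{log} schemes for the toroidal log structures furnished by Proposition \ref{prop:log-homogeneous} because $\pi$ respects the orbit stratifications.

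Assertion (1) is exactly \cite[Corollary 7.6]{Knop96}, since $\o{H}$ is spherically closed by Definition \ref{def:spherically-closed}. For assertion (2): the inclusion $\Lambda'\subset\Lambda$ is immediate from $\o{H}\supset H$, and $\cV=\cV'$ was already used. To see that $\pi$ is finite log \'etale I would argue locally via the Local Structure Theorem \ref{thm:local-s}. Near the closed orbit the slice $M$ (resp.\ $M'$) is the affine toric variety of the cone $\cV$ for the lattice $\Lambda^\vee$ (resp.\ $\Lambda'^\vee$), and $\Lambda^\vee\subset\Lambda'^\vee$ is a finite-index inclusion with the \emph{same} rational cone; the induced toric morphism $M\to M'$ is therefore finite and, with its standard log structure, log \'etale. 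Covering $\u{\X}$ by $G$-translates of $\u{\X}_{\uY,B}\cong Q^u\times M$ as in Lemma \ref{lem:closed-orbit}---the parabolic $Q$ being the same for $\u{\X}$ and $\u{\X}'$, as it is determined by the colors---gives that $\pi$ is finite log \'etale everywhere; restricting to the torus-fixed point of $M$ and invoking $G$-equivariance shows $\pi|_{\uY}\colon\uY\to\uY'$ is an isomorphism onto the closed orbit $\uY'$. For assertion (3): the spherical closure is built from the $N_G(H)$-action on the set of colors, so the colors of $G/H$ and $G/\o{H}$ are canonically matched; and since $\pi$ is log \'etale---hence honestly \'etale along each color, where the log structure is trivial---and restricts on open orbits to the finite map $G/H\to G/\o{H}$, each $\pi^{-1}(D_i')$ is a reduced irreducible $B$-stable divisor, necessarily $D_i$.

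The only genuinely non-formal ingredients are the equality $\cV=\cV'$ and the smoothness in (1), both due to Knop; the remainder is manipulation of colored cones and their toric charts. The point needing the most care is to verify, within the paper's conventions, that the log structure of Proposition \ref{prop:log-homogeneous} on $\u{\X}$ is the pullback of the corresponding one on $\u{\X}'$ --- but this reduces to the toric computation with the finite-index sublattice $\Lambda^\vee\subset\Lambda'^\vee$ sketched above, where it is transparent.
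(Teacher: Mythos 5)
Your proposal is correct, but note that the paper does not actually prove this lemma: it is introduced with ``Recall the following facts from \cite[Remark 30.1]{Timashev}'' and closed with \verb|\qed|, i.e.\ the authors treat all three assertions as a citation to Timashev and to \cite[Corollary 7.6]{Knop96}. What you have done differently is to reconstruct the argument those references encapsulate: functoriality of spherical embeddings to extend $G/H\to G/\o H$ to $\pi$ once $\cV=\cV'$ is known, the local structure theorem to reduce finiteness and log \'etaleness to the Kummer cover of toric slices $\Spec\kk[\cV^\vee\cap\Lambda]\to\Spec\kk[\cV^\vee\cap\Lambda']$ attached to the finite-index inclusion $\Lambda'\subset\Lambda$, and the definition of the spherical closure (triviality of the $\o H/H$-action on $\Delta$) to match colors. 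This is the right assembly, and the two genuinely external inputs you isolate ($\cV=\cV'$ and smoothness of $\u{\X}'$) are exactly the ones the paper cites. Two spots deserve one more sentence each if this were written out in full: for (3), irreducibility of $\pi^{-1}(D_i')$ follows because the Galois group $\o H/H$ of the cover $G/H\to G/\o H$ acts transitively on fibers yet, by definition of the spherical closure, preserves every color, so the preimage of a color cannot have two components; and for (2), the identification $\uX_{\uY,B}\cong Q^u\times M$ must be chosen compatibly on source and target (e.g.\ take $M'=\pi(M)$), which is harmless but worth saying before invoking the toric computation.
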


%\begin{thm}\label{thm:char0-smooth}Theorem \ref{thm:char0} holds when $\X$ is smooth (wonderful).\end{thm}

%\proof For (1) and (2), it suffices to verify the Hypotheses in Theorem \ref{thm:main-wonderful}. Hypothesis \ref{hyp:Y-sep} is automatic in characteristic zero. Hypothesis \ref{hyp:knop} is proved in \cite[Thm. 2]{Brion-onecycle} and Hypothesis \ref{hyp:cone} is proved in \cite[Lem. 2.1.2]{Brion07}.

We briefly recall the types of colors on wonderful compactifications \cite{Luna}. Let $\Sigma$ be the finite set of spherical roots of $\uX$, lying in the character lattice of $T$. See \cite[Proposition 6.4]{Luna}. We say a simple root $\alpha$ \emph{moves} a color $\uD$ if $P_\alpha \uD\neq \uD$ where $P_\alpha$ is the minimal parabolic group containing $B$ and associated to $\alpha$. Each color $\uD$ is moved by a unique simple root $\alpha_{\uD}$. Let $\Delta(\alpha)$ be the set of colors moved by $\alpha$.  
\begin{definition}\label{def:b}
We say that the color $\uD$ is 
\begin{enumerate}
\item of type (a) if $\Delta(\alpha_{\uD})$ contains two colors;
\item of type (a') if $\Delta(\alpha_{\uD})=\{\uD\}$ and $2\alpha\in \Sigma$;
\item of type (b) if $\Delta(\alpha_{\uD})=\{\uD\}$ and no multiple of $\alpha$ is in $\Sigma$.
\end{enumerate} 

\end{definition}
By \cite[Section 1.4]{Luna}, each color belongs to a unique type as above.

\begin{proposition}\label{prop:one-cycle}
Assume $\ch\kk=0$. Then Hypothesis \ref{hyp:knop} holds when all colors of $\u{\X}$ are of type (b). %The wonderful compactification $\overline{\X}$ of $G/\overline{H}$ is smooth.
\end{proposition}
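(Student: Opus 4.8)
The plan is to produce, for each color $D_i \in \Delta$, a $B$-invariant irreducible rational curve $\uB_i \subset \uY$ with the prescribed intersection numbers $\uB_i \cap D_j = \delta_{ij}$, and then to show that these curves span the cone of effective one-cycles. The first step is to reduce to the spherically closed case: by Lemma \ref{lem:spherical-closure} the morphism $\pi: \X \to \X'$ is finite log \'etale, restricts to an isomorphism $\pi|_{\uY}: \uY \xrightarrow{\sim} \uY'$ on closed orbits, and identifies colors by pullback, so $B$-invariant rational curves in $\uY$ correspond to $B$-invariant rational curves in $\uY'$ with the same intersection numbers against colors. Since $\u\X'$ is smooth, I may thus assume from the start that $\u\X$ is a smooth wonderful variety of a spherically closed subgroup.

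Next I would invoke the local structure theorem (Theorem \ref{thm:local-s}) at the unique closed orbit $\uY$. Write $Q$ for the parabolic stabilizer of $\u\X_\circ$, with Levi $L$; then $\u\X_\circ \cong Q^u \times M$ where $M$ is a smooth affine toric variety for the torus $A = L/L_0$, with $\fX^*(A) \cong \Lambda$ and $M$ determined by the colored cone $(\cV, \emptyset)$. Because all colors are of type (b), each color $D_i$ is moved by a distinct simple root $\alpha_i = \alpha_{D_i}$ with $\Delta(\alpha_i) = \{D_i\}$; the key combinatorial input (from \cite[Section 1.4]{Luna}, \cite{Luna-Vust}) is that the images $\rho(D_i) \in N$ are then given in terms of the fundamental coweights / the pairing with spherical roots in such a way that, together with the valuation cone $\cV$ being the full negative Weyl-chamber-type cone, the $\rho(D_i)$ form (part of) a basis-like system. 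Concretely, the curves $\uB_i$ will be constructed inside $\uY$ as follows: $\uY$ is itself a flag variety (the closed $G$-orbit of a wonderful variety is $G/Q^-$ for a suitable parabolic), and inside $\uY$ the Schubert curve associated to the simple reflection $s_{\alpha_i}$ is a $B$-stable $\P^1$; I claim this curve, call it $\uB_i$, meets $D_i$ transversally in one point and is disjoint from $D_j$ for $j \neq i$. This meeting behavior is exactly detected by "$\alpha$ moves $D$": $P_{\alpha_i} D_i \neq D_i$ means the $s_{\alpha_i}$-curve is transverse to $D_i$, while $P_{\alpha_j} D_i = D_i$ for $j \neq i$ (which follows since $\Delta(\alpha_j)=\{D_j\}$ and type (b) rules out $D_i$ being $\alpha_j$-stable-but-not-moved in the relevant way) forces $\uB_j \cap D_i$ to be either empty or contained in $D_i$, and a degree count using $\pic(\u\X) = \bigoplus \ZZ D_i$ (Brion, \cite{Brion-onecycle}) pins it down to $\delta_{ij}$.

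Having the $\uB_i$ with $\uB_i \cap D_j = \delta_{ij}$, the matrix of intersection numbers against the colors is the identity, so the classes $[\uB_1], \dots, [\uB_s]$ are linearly independent in $N_1(\u\X)$; since $\dim N_1(\u\X) = \rank \pic(\u\X) = s$ they form a $\QQ$-basis. It then remains to check the integral/effective statement $\NE(\u\X) = \NN\langle \uB_1, \dots, \uB_s\rangle$. For this I would use that $\u\X$ is a smooth projective spherical variety, whose Mori cone is generated by classes of $B$-stable (indeed $T$-stable) curves, and among those the $G$-stable ones lie on the boundary while the ones meeting $G/H$ or a single color are exactly the $\uB_i$; more efficiently, one can cite Brion's description \cite{Brion-onecycle} of $\NE$ of a smooth complete spherical variety together with the fact (type (b), so the colored data is "clean") that the extremal curve classes biject with $\Delta$ via the $\uB_i$. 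The main obstacle I anticipate is precisely this last point — verifying that the $\uB_i$ not only span a full-dimensional simplicial subcone but actually generate $\NE(\u\X)$ as a monoid, i.e. that there are no further extremal rays coming from the boundary divisors $X_1, \dots, X_t$; this is where the type (b) hypothesis does real work, and I would handle it by combining the local structure theorem (which expresses every $G$-orbit, hence every boundary stratum, toric-combinatorially via the fan on $\cV$) with Brion's one-cycle formula, reducing the claim to the statement that $\cV$ together with the $\rho(D_i)$ generate $N$ appropriately — which is Hypothesis \ref{hyp:cone}-type information automatic in the type (b) / minimal-rank situation.
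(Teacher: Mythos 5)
You should first note that the paper does not prove this proposition in-house: its entire proof is the citation ``This follows from \cite[Theorem 2.4 and Lemma 2.12]{irr-rat}'', i.e.\ the argument is outsourced to the authors' companion paper. Your proposed route --- reduce to the spherically closed, hence smooth, wonderful variety via Lemma \ref{lem:spherical-closure}; take for $\uB_i$ the Schubert curve in the closed orbit $\uY$ attached to the simple root $\alpha_{D_i}$ moving $D_i$; verify $\uB_i\cdot D_j=\delta_{ij}$; then invoke Brion's generation of the Mori cone of a complete spherical variety by $B$-stable curves --- is exactly the strategy the authors themselves sketch in the proof of Theorem \ref{thm:G}, so you are on the intended track. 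The reduction step is sound: since $\pi|_{\uY}$ is an isomorphism, $\pi^{-1}(D_i')=D_i$, and $\pic(\u\X)\cong\ZZ^{\Delta}$ with the $[\uB_i]$ dual to the colors, the projection formula transports both the intersection numbers and the monoid statement from $\u\X'$ back to $\u\X$.

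However, the two steps carrying the real content are asserted rather than proved. First, $\uB_i\cdot D_i=1$ is precisely where type (b) acts: for a color of type (a') the corresponding curve meets its color with multiplicity $2$, and for type (a) it meets two distinct colors, so your sentence ``$P_{\alpha_i}D_i\neq D_i$ means the $s_{\alpha_i}$-curve is transverse to $D_i$'' is false without the type (b) hypothesis, and you never carry out the multiplicity computation (it reduces to the rank-one analysis of $P_{\alpha_i}$-orbits that defines the types in \cite[Section 1.4]{Luna}). Second, and more seriously, the monoid generation $\NE(\u\X')=\NN\langle\uB_1,\dots,\uB_s\rangle$ is the heart of the matter and you explicitly leave it open. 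Knowing that the $[\uB_i]$ form the dual basis to the colors does \emph{not} yield generation: an irreducible curve contained in a color or in a boundary divisor $X_j$ can a priori have negative intersection with some $D_i$, so one cannot argue from nonnegativity of intersection numbers. What is needed is Brion's classification of $B$-stable irreducible curves on a complete spherical variety together with a case analysis showing that, when every color is of type (b), each such curve class is a nonnegative integral combination of the $[\uB_i]$ --- equivalently, that the extremal $B$-stable curves all lie in the closed orbit. Your fallback to ``Hypothesis \ref{hyp:cone}-type information'' only controls a rational cone in $N$, not the integral Mori cone. As written, the proposal is a correct outline with the two decisive verifications missing.
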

\begin{proof} This follows from \cite[Theorem 2.4 and Lemma 2.12]{irr-rat}.
%Consider the $G$-equivariant morphism $\pi:\X\to {\X'}$ as in Lemma \ref{lem:spherical-closure}. By \cite[Theorem 2]{Brion-onecycle}, there exists $B$-invariant curves $\uB_1,\cdots,\uB_s$ on $\uY$ and ${\uB'_1},\cdots,{\uB'_s}$ on $\uY'$ such that:
%\begin{enumerate}
%\item $\pi$ maps $\uB_i$ isomorphically onto ${\u{B}'_i}$;
%\item ${\uD'_i}\cap {B'_j}=\delta_{ij}$.
%\end{enumerate}
%By the projection formula, we have $\uD_i \cap B_j =\delta_{ij}$. By \cite[Theorem 2(3)]{Brion-onecycle}, the effective divisors $D_1, \cdots, D_s$ forms a $\NN$-linear basis of the cone of nef divisors. Hence dually $\uB_1,\cdots,\uB_s$ forms a $\NN$-linear basis of the cone of effective curve classes of $\u{\X}$. % \Qile{I add more explanation. Please check!}
\end{proof}

%\Yi{most copy from irr-rat paper. I do not know if I should define here again or quote.}

\begin{proposition}\label{prop:cone}
Assume $\ch\kk=0$. Then Hypothesis \ref{hyp:cone} holds.
\end{proposition}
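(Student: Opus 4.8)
The plan is to derive Hypothesis \ref{hyp:cone} from the characteristic-zero description of the valuation cone together with Luna's combinatorics of colors; everything takes place over $\QQ$, where it suffices to show that $-\cV$ is contained in the $\QQ_{\geq 0}$-cone spanned by $\rho(D_1),\dots,\rho(D_s)$ in $N_\QQ$. First I would invoke Brion's theorem on the valuation cone (see \cite{Knop}): in characteristic zero
\[
\cV=\{\,v\in N_\QQ \ :\ \langle\sigma,v\rangle\le 0 \ \text{ for all }\sigma\in\Sigma\,\},
\]
where $\Sigma\subset\Lambda$ is the set of spherical roots, which for a wonderful compactification is linearly independent and of cardinality $\dim N$, hence a basis of $\Lambda_\QQ$. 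Consequently $\cV$ is a simplicial cone, its opposite $-\cV=\{v:\langle\sigma,v\rangle\ge 0\ \forall\,\sigma\in\Sigma\}$ is the simplicial cone spanned by the basis $\{\omega_\sigma\}_{\sigma\in\Sigma}$ of $N_\QQ$ dual to $\Sigma$, and it is enough to place each $\omega_\sigma$ inside $\operatorname{cone}(\rho(D_1),\dots,\rho(D_s))$.

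Next I would bring in Luna's classification of the colors and their relation to the simple roots that move them \cite[Section 1.4]{Luna}, together with the map $\rho|_\Delta$ of Section \ref{not:spherical}(8). To each spherical root $\sigma$ this data attaches a ``coroot'' $\sigma^{\vee}\in N$ that is a non-negative integral combination of the $\rho(D_i)$ (concretely, a sum of $\rho(D)$ over the colors $D$ with $\alpha_D$ in the support of $\sigma$, up to the standard factor for type-(a$'$) colors), in such a way that $(\Sigma,\{\sigma^{\vee}\}_{\sigma\in\Sigma})$ is the Cartan datum of the root system of $G/H$ whose Weyl group is the little Weyl group. In particular $C:=\bigl(\langle\sigma,\sigma'^{\vee}\rangle\bigr)_{\sigma,\sigma'\in\Sigma}$ is a generalized Cartan matrix — positive diagonal, non-positive off-diagonal, this non-positivity being exactly the compatibility built into the axioms of a spherical system — and it is of finite type because $\cV$ is strictly convex in the sober case (Section \ref{not:spherical}(11)). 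A finite-type generalized Cartan matrix has entrywise non-negative inverse, so $\omega_\sigma=\sum_{\sigma'}(C^{-1})_{\sigma'\sigma}\,\sigma'^{\vee}$ with all coefficients $\ge 0$; since each $\sigma'^{\vee}$ already lies in $\operatorname{cone}(\rho(D_1),\dots,\rho(D_s))$, so does $\omega_\sigma$. This yields $-\cV\subseteq\operatorname{cone}(\rho(D_1),\dots,\rho(D_s))$, which is Hypothesis \ref{hyp:cone}.

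The main obstacle — and the only point where the characteristic-zero hypothesis is genuinely used — is the bookkeeping that matches these two combinatorial inputs: verifying, from Luna's explicit list of spherical roots and their associated colors, that the ``coroots'' $\sigma^{\vee}$ are realized inside the color cone with the pairings above, so that $C$ really is a finite-type generalized Cartan matrix. If a direct case analysis is unappealing, one can first apply Lemma \ref{lem:spherical-closure} to reduce to the spherically closed case: there $\cV=\cV'$ and $\pi^{-1}(D_i')=D_i$, so the two color cones agree up to positive scaling, and it is enough to prove the statement for the smooth wonderful compactification $\u{\X}'$, for which the relation between colors and spherical roots is most transparent. Either way, the geometric content is unchanged, and the characteristic-zero input is concentrated entirely in Brion's description of $\cV$ and the finite-type nature of the resulting Cartan matrix.
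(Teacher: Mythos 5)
Your proposal does not close the argument. The paper's proof is a two-line reduction: pass to the spherical closure $\overline{H}$ via Lemma \ref{lem:spherical-closure}, observe that $\X$ and $\X'$ have the same valuation cone and the same colors under pullback, and then cite \cite[Lemma 2.1.2]{Brion07}, which \emph{is} Hypothesis \ref{hyp:cone} for the smooth wonderful compactification $\u{\X}'$. Your final paragraph reproduces the reduction step, but then stops at the assertion that the smooth case is "most transparent" --- that is not a proof, and the remaining content is exactly Brion's lemma, which you never invoke or reprove.

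Your main line of argument has its gap at the step you yourself flag as "the main obstacle." The outer linear algebra is fine: in characteristic zero $\cV$ is the antidominant chamber of the spherical roots $\Sigma$, which form a basis of $\Lambda_\QQ$ for a wonderful variety, so $-\cV$ is spanned by the dual basis $\{\omega_\sigma\}$, and if one produces elements $\sigma^\vee$ in $\operatorname{cone}(\rho(D_1),\dots,\rho(D_s))$ whose pairing matrix with $\Sigma$ is a nonsingular $Z$-matrix with nonnegative inverse, the containment follows. But the existence of such $\sigma^\vee$ is precisely what needs proof, and your recipe (sum $\rho(D)$ over colors $D$ with $\alpha_D\in\operatorname{supp}(\sigma)$, with a factor for type (a$'$)) is neither justified nor obviously correct. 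In particular the off-diagonal non-positivity is delicate for type (a) colors: the spherical-system axioms permit $\langle\rho(D),\sigma'\rangle=1>0$ when $\sigma'=\alpha_D$, so a type (a) color whose moving root lies in the support of a \emph{different} spherical root $\sigma$ would contribute a positive off-diagonal entry to your matrix $C$. (Note also that Proposition \ref{prop:cone} carries no type (b) hypothesis, so you cannot exclude types (a) and (a$'$).) Verifying that the coroots of the little root system are realized inside the color cone with the required signs is essentially a proof of Brion's Lemma 2.1.2 itself; as written, your argument assumes its conclusion at the crucial point.
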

\begin{proof} 
Consider the $G$-equivariant morphism $\pi:\X\to{\X'}$ as in Lemma \ref{lem:spherical-closure}. Notice that $\X$ and ${\X'}$ share the same valuation cone and the same set of colors via pullback. It suffices to check statement on ${\X'}$, which is given by \cite[Lemma 2.1.2]{Brion07}.
\end{proof}

\begin{proposition}\label{prop:character-H}
When $\ch\kk=0$, we have the following short exact sequence:
\[0\to \Lambda\to \pic(\u{\X})\to \fX^*(H)\to 0.\]
\end{proposition}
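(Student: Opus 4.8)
The plan is to glue two short exact sequences: one relating $\pic(\u{\X})$ to $\pic(G/H)$ through the boundary, and the classical one expressing $\pic(G/H)$ in terms of $\fX^{*}(H)$. Since $G$ is simply connected and $\ch\kk=0$, the wonderful compactification $\u{\X}$ is smooth, so $\pic(\u{\X})=\operatorname{Cl}(\u{\X})$; likewise $\pic(G/H)=\operatorname{Cl}(G/H)$.

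First I would record the standard description of the class group of a complete spherical variety through $B$-stable divisors. Let $X_{1},\dots,X_{t}$ be the $G$-stable boundary divisors and $D_{1},\dots,D_{s}$ the colors; together these are exactly the $B$-stable prime divisors of $\u{\X}$, while $D_{1}\cap(G/H),\dots,D_{s}\cap(G/H)$ are those of $G/H$. Because $G$ is connected and $\u{\X}$ is complete, every $B$-stable principal divisor on $\u{\X}$ is $\operatorname{div}(f_{\lambda})$ for a unique $\lambda\in\Lambda$, so
\[
\pic(\u{\X})\ \cong\ \Bigl(\,\bigoplus_{i}\ZZ[X_{i}]\ \oplus\ \bigoplus_{j}\ZZ[D_{j}]\,\Bigr)\Big/\bigl\langle\,\operatorname{div}(f_{\lambda})\ :\ \lambda\in\Lambda\,\bigr\rangle ,
\]
and restriction along $j\colon G/H\hookrightarrow\u{\X}$ is the surjection $j^{*}\colon\pic(\u{\X})\twoheadrightarrow\pic(G/H)$ whose kernel is the subgroup generated by the boundary classes $[X_{1}],\dots,[X_{t}]$.

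The crux is to identify $\ker j^{*}$ with the image of the global-chart morphism $L\colon\Lambda\to\pic(\u{\X})$ of Proposition \ref{prop:dual-pic-map}. Since the $X_{i}$ do not meet $G/H$, any divisor supported on them restricts to zero, so the formula $L(\lambda)=\cO_{\u{\X}}\bigl(\sum_{i}\nu_{X_{i}}(f_{\lambda})X_{i}\bigr)$ of Proposition \ref{prop:dual-pic-map}(1) shows $\im L\subseteq\ker j^{*}$; for the reverse inclusion it is enough that $\lambda\mapsto(\nu_{X_{i}}(f_{\lambda}))_{i}$ defines a \emph{surjection} $\Lambda\twoheadrightarrow\ZZ^{t}$. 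Now $\nu_{X_{i}}(f_{\lambda})=\rho(X_{i})(\lambda)$, and $\rho(X_{1}),\dots,\rho(X_{t})$ are the primitive generators of the rays of the colored cone $\cV\subset N$ defining $\u{\X}$; since $\u{\X}$ is smooth and toroidal, $\cV$ is a regular cone, so these generators form a $\ZZ$-basis of $N_{\ZZ}=\Lambda^{\vee}$ and the displayed map is in fact an isomorphism. (Alternatively, the same surjectivity can be read off \cite[Theorem 1]{Brion-onecycle}, which says the colors freely generate $\pic(\u{\X})$.) As $L$ is injective by Lemma \ref{lem:torsion-free} together with Proposition \ref{prop:cone}, we obtain the exact sequence
\[
0\ \to\ \Lambda\ \xrightarrow{\ L\ }\ \pic(\u{\X})\ \xrightarrow{\ j^{*}\ }\ \pic(G/H)\ \to\ 0 .
\]

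It remains to replace $\pic(G/H)$ by $\fX^{*}(H)$. As $G$ is semisimple and simply connected, $\fX^{*}(G)=0$ and $\pic(G)=0$ — the latter because the fundamental weights lie in $\fX^{*}(T)$, so the $G$-stable divisors in the complement of the open Bruhat cell are cut out by matrix coefficients — so the classical exact sequence $\fX^{*}(G)\to\fX^{*}(H)\to\pic(G/H)\to\pic(G)$ collapses to $\pic(G/H)\cong\fX^{*}(H)$; composing gives the statement. I expect the identification $\ker j^{*}=\im L$ to be the main obstacle: one must know $\im L$ is the full subgroup generated by the boundary classes and not merely a finite-index subgroup, and the cleanest way in is the regularity of $\cV$ (equivalently smoothness of $\u{\X}$) or Brion's computation, though one could instead match torsion orders via $|\operatorname{coker}(L)_{\mathrm{tors}}|=[\Lambda^{\vee}:\rho|_{\Delta}(\ZZ^{\Delta})]$ coming from Proposition \ref{prop:dual-pic-map}(2).
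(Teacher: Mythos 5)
Your argument hinges on the claim that ``since $G$ is simply connected and $\ch\kk=0$, the wonderful compactification $\u{\X}$ is smooth,'' and this is not justified: in this paper the wonderful compactification of $G/H$ for a sober $H$ is the toroidal compactification with colored fan $(\cV,\emptyset)$, which is smooth only when the cone $\cV$ is regular for the lattice $\Lambda^\vee$ (equivalently, when the local toric slice $\uM$ is smooth). That holds for the spherical closure $\o H$ (Lemma \ref{lem:spherical-closure}(1), citing Knop), but not for a general sober $H$; indeed the paper's own proof opens with ``the case when $\u{\X}$ is smooth is proved in [Brion07]; the proof for the general case is similar,'' so the singular case is precisely the content. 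Without smoothness your crux step collapses: you can no longer identify $\pic(\u{\X})$ with $\operatorname{Cl}(\u{\X})$, the kernel of $j^*\colon\pic(\u{\X})\to\pic(G/H)$ need not be generated by the boundary classes $[X_i]$ (these need not even be Cartier individually), and the map $\lambda\mapsto(\nu_{X_i}(f_\lambda))_i$ is then only injective of finite index in $\ZZ^t$, not surjective. Your fallback remark about matching torsion orders does not close this gap either.

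Two ways to repair it. The cleaner one within your framework: do not route through the boundary divisors at all. Since $G/H$ is smooth and its $B$-stable prime divisors are exactly the colors, while its open $B$-orbit has trivial Picard group, one gets $\pic(G/H)\cong\ZZ^{\Delta}/\langle(\nu_{D_j}(f_\lambda))_j:\lambda\in\Lambda\rangle=\operatorname{coker}(L)$ directly under the identification $\pic(\u{\X})\cong\ZZ^{\Delta}$ of Proposition \ref{prop:dual-pic-map}; injectivity of $L$ is Lemma \ref{lem:torsion-free}, and your final step $\pic(G/H)\cong\fX^*(H)$ (valid since $\fX^*(G)=0$ and $\pic(G)=0$ for $G$ semisimple simply connected) then finishes. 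The paper instead avoids Picard groups of the open orbit altogether: it uses Brion's exact sequence
\[0\to \fX^*(B)^{B\cap H}\to \fX^*(B)\times_{\fX^*(B\cap H)}\fX^*(H)\to \fX^*(H)\to 0,\]
identifies the first term with $\Lambda$ and the middle term with the free group on the $B\times H$-weights of the color equations $f_D$ on $G$ (Luna), and matches this with $L\colon\Lambda\to\pic(\u{\X})\cong\ZZ^{\Delta}$ via $\nu_D(f)=\nu_{\psi^{-1}(D)}(\psi^*f)$ --- an argument that never touches the smoothness of $\u{\X}$.
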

\begin{proof} 
The case when $\u{\X}$ is smooth is proved in \cite[Proposition 2.2.1]{Brion07}. The proof for the general case is similar. Let $\psi:G\to G/H$ be the quotient map. The following short exact sequence in \cite[(2.2.5)]{Brion07} still holds in our case:
\[0\to \fX^*(B)^{B\cap H}\to \fX^*(B)\times_{\fX^*(B\cap H)}\fX^*(H)\to \fX^*( H)\to 0.\]
Here the first term $\fX^*(B)^{B\cap H}$ is the set of weights of $B$-eigenfunctions on $\u{\X}$, hence is $\Lambda$. The second term $\fX^*(B)\times_{\fX^*(B\cap H)}\fX^*(H)$ is the set of weights of $B\times H$-eigenfunctions on $G$.  It is freely generated by the $B\times H$-weight of $f_{\uD}$, $\uD\in \Delta$, where $f_{\uD}$ is the defining equation of $\psi^{-1}(\uD)$ on $G$ with $f_{\uD}(o) = 1$, see \cite[Lemma 6.2.2]{Luna}. Furthermore, the following diagram commutes because $\nu_{\uD}(f)=\nu_{\psi^{-1}(\uD)}{(\pi^*f)}$.
$$\begin{CD}\fX^*(B)^{B\cap H}@>>> \fX^*(B)\times_{\fX^*(B\cap H)}\fX^*(H)\\
@| @|\\
\Lambda@>>>\pic(\u{\X})\end{CD}$$
 The proposition then follows.
\end{proof}

To summarize, we have 
\begin{thm}\label{thm:char0}
Assume that $\ch\kk=0$, and \R{that} $\X$ is the log smooth variety associated to a wonderful compactification of an open sober spherical homogeneous space $G/H$ with $G$ simply connected. Further assume all colors of $\u{\X}$ are of type (b). Then we have:
\begin{enumerate1}
%\item Any log admissible effective curve class on $\u{\X}$ with nontrivial contact order (resp., trivial contact order) can be represented by an $\A^1$-curve (resp., a rational curve) on $G/H$.
\item Any log admissible effective curve class on $\u{\X}$ \R{can be represented by a free log rational curve on $\X$.}
\item The unique closed orbit is a fully free center of $\X$. In particular, $\X$ is $\A^1$-connected.
\item The center of $\X$ is primitive if and only if $\fX^*(H)$ is torsion-free. In particular, when $H$ is connected, the center of $\X$ is primitive.
\end{enumerate1}
\end{thm}
\begin{proof}
The statements follow from Theorem \ref{thm:main-wonderful} and Proposition \ref{prop:one-cycle}, \ref{prop:cone} and \ref{prop:character-H}. Note that $H$ is connected implies $\fX^*(H)$ is torsion-free.
\end{proof}

%For each $B_i\subset Y$, according to (\ref{equ:curve-ev}), there is an element $v_i\in(P^{gp})^\vee=N$ correspond to $B_i$. We claim that $v_i=c|_B(e_i)$. The set $c|_B(\N^\Delta)\cap -\cV$ is nonempty by Hypothesis \ref{hyp:cone}. Any element in $v\in c|_B(\N^\Delta)\cap -\cV$ is of the form $c|_B(\sum_{i=1}^s b_iD_i)$, where $b_i\ge 0$ for $i=1,\cdots,s$. Consider the one-cycle on $Y$ given by $\beta=\sum_{i=1}^s b_iB_i$, it can be deformed to a free rational curve $C$ on $Y$. By (\ref{equ:curve-ev}), there is an element $v_C\in(P^{gp})^\vee=N$ correspond to $C$. 

% The general case follows from the smooth case because there exists a $G$-equivariant morphism from $\X$ to $\overline{\X}$ which maps colors to colors bijectively and both valuation cones are the same.\qed

%When $\X$ is smooth, by \cite[Thm. 2]{Brion-onecycle}, $d_i=1$ for $i=1,\cdots,s$ and $B_i$'s generate the cone of effective classes on $\X$. We have the following theorem.

%It is natural to ask the question whether $TC(\X^\dagger)$ coincides with the integral points $\cV_\Z$ of $\cV$. 

%\begin{corollary} \cite[Prop. 2.2.1]{Brion07}With the same assumptions as in \ref{cor:A^1-class}. The set of contact orders of $\A^1$-curves is $\cV\cap N_\Z$ if and only if $H$ is connected.\qed\end{corollary}

\subsection{Semisimple groups}\label{ss:ssgroup}
Let $G$ be a semisimple linear algebraic $\kk$-group of rank $r$, and $B$ \R{be} a Borel subgroup containing a maximal torus $T$. Denote by $E := \fX^{*}(T)\otimes_{\ZZ}\QQ$ the $\QQ$-vector space with the Euclidean product $(\cdot,\cdot)$ given by the Killing form. Let $(E, \Phi)$ be the root system associated to $(G,T)$ with the set of positive simple roots $\Delta:=\{\alpha_1,\cdots,\alpha_r\} \subset \Phi$ associated to $B$. Let $\Lambda_R$ be the root lattice generated by $\Delta$. Let $\Lambda_{R^\vee}$ be the coroot lattice in $E$ generated by the coroots 
\[
\alpha_j^\vee=\frac{2}{(\alpha_j,\alpha_j)} \alpha_j, \text{ }j=1,\cdots, r.
\]

%Denote by $\Lambda_W$ the weight lattice generated by the fundamental weights $\lambda_1,\cdots,\lambda_r$ satisfying,
%\[(\lambda_i,\alpha^{\vee}_j)=\delta_{ij}, \mbox{\ \ \ \ \ for any }i,j.\]
%Similarly we have the coweight lattice $\Lambda_{W^\vee}$ in $E$ generated by the coweights \[\lambda_i^\vee=\frac{2}{(\alpha_i,\alpha_i)} \lambda_i, \text{ }i=1,\cdots, r.\]

%By the above notations, the weight lattice $\Lambda_{W}$ is dual to the coroot lattice $\Lambda_{R^\vee}$. 
Let $\mathcal{C}_+$ be the positive Weyl chamber spanned by the positive linear combination of fundamental weights. Denote by $\cC_{-}:= - \cC_{+}$ the negative Weyl chamber. %They are simplicial cones in $E$. 

 %Using this pairing $(\cdot , \cdot)$, we identify the cocharacter group $\fX_{*}(T)$ as a lattice lies in the vector space $E$.\Qile{Make sure the last sentence is correct!}

The group $G$ is a separable spherical homogeneous space under $G\times G$ by $(g,h).k=gkh^{-1}$, because it contains a $B\times B^-$-dense orbit by Bruhat decomposition. Denote $\u{\X}_G$ by the wonderful compactification of $G$ and denote $\X_G$ by the associated log variety. 

\begin{proposition}\label{prop:lattice}
We have the following properties:
\begin{enumerate}
\item $\Lambda\cong \fX^*(T)$ and $N\cong \fX_*(T)\otimes_\Z \Q$;
\item $\cV$ is the negative Weyl chamber;
\item the set of colors $\Delta$ maps bijectively to the set of simple coroots under $\rho|_\Delta$ and the image of $\rho|_\Delta$ is the coroot lattice in $N$.
\end{enumerate}
\end{proposition}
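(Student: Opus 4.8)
The plan is to recall the classical description of the $G \times G$-equivariant geometry of the wonderful compactification $\u{\X}_G$ and translate it into the spherical-variety data $\Lambda, N, \cV, \rho|_\Delta$ introduced in Section \ref{not:spherical}. First I would identify the open $B \times B^-$-orbit and compute its weight lattice. By the Bruhat decomposition, the big cell $B w_0 B^-$ (equivalently $U^- T U$) is the dense $B \times B^-$-orbit through the base point $o = e$, and the ring of $B \times B^-$-eigenfunctions on $G$ is generated by the matrix coefficients $f_{\lambda}$ attached to dominant weights $\lambda$, with $(b_1, b_2) \cdot f_\lambda = \lambda(b_1)\lambda(b_2)^{-1} f_\lambda$ up to the standard normalization; the group of weights that actually occur is all of $\fX^*(T)$. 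This gives $\Lambda \cong \fX^*(T)$, and dualizing, $N = \hom_\ZZ(\Lambda, \QQ) \cong \fX_*(T) \otimes_\ZZ \QQ$, which is statement (1).

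Next I would pin down the valuation cone $\cV$. The standard computation (see Brion's work on the wonderful compactification of a semisimple group) shows that each boundary divisor $X_i$ corresponds, via the valuation $\nu_{X_i}$ restricted to $B \times B^-$-eigenfunctions, to the image of a negative simple coroot $-\alpha_i^\vee$ in $N$; equivalently the colored fan of $\u{\X}_G$ is the fan of Weyl chambers and walls, and $\cV$ is exactly the cone spanned by $-\alpha_1^\vee, \dots, -\alpha_r^\vee$, i.e.\ the negative Weyl chamber $\cC_- = -\cC_+$. This is statement (2). I would derive this by using the local structure theorem (Theorem \ref{thm:local-s}): the slice $M$ at the unique closed orbit is the affine toric variety $\AA^r$ with torus $A \cong T$, and the $r$ boundary toric divisors of $M$ match the $r$ simple coroots with a sign coming from the fact that $\u{\X}_G$ toroidally compactifies $G$ using the negative (anti-dominant) directions.

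Finally, statement (3) is essentially a bookkeeping consequence of the two previous points combined with Proposition \ref{prop:dual-pic-map}(2): there I already know $-L^\vee = \rho|_\Delta$, and $\pic(\u{\X}_G) \cong \ZZ^\Delta$ is freely generated by the $s = r$ colors $D_1, \dots, D_r$ (one color per simple root, since $\u{\X}_G$ is the group case and $\Delta(\alpha_i) = \{D_i\}$). Each color $D_i$ is the closure of the Bruhat cell of codimension one, and its image $\rho|_\Delta(D_i) = \nu_{D_i}|_\Lambda$ is precisely the simple coroot $\alpha_i^\vee \in \fX_*(T) \subset N$; since the $\alpha_i^\vee$ are linearly independent, $\rho|_\Delta : \ZZ^\Delta \to N$ is injective with image the coroot lattice $\Lambda_{R^\vee}$. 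I expect the main obstacle to be getting the signs and normalizations internally consistent — in particular making sure the identification $\Lambda \cong \fX^*(T)$ is the one for which $\cV$ comes out as the \emph{negative} chamber and $\rho|_\Delta(D_i) = +\alpha_i^\vee$ rather than its negative — since the rest of the argument is a direct citation of the local structure theorem and of Brion's computation of $\pic$ and the valuation cone for the group compactification.
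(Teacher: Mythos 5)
Your overall route is the same as the paper's: for (1) and (2) the paper simply cites Brion's computation for the group compactification, observing that the argument rests only on the local structure theorem (Theorem \ref{thm:local-s}) and general spherical-embedding theory and therefore works in arbitrary characteristic (a point you should make explicit, since Theorem \ref{thm:G} is stated for all $\ch\kk\geq 0$); for (3) it reduces to $G$ simply connected and uses that the defining equation of the color $D_i$ is the $B\times B^-$-eigenfunction of weight $(\chi_i,-\chi_i)$ with $\nu_{D_i}(f_{\chi_j})=\delta_{ij}$, so that $\rho(D_i)$ is the dual basis vector to the fundamental weights, i.e.\ the simple coroot $\alpha_i^\vee$. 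Your treatment of (1) and (3) is consistent with this.

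There is, however, a genuine error in your derivation of (2). You assert that the boundary divisors $X_i$ correspond to the negative simple coroots $-\alpha_i^\vee$ and that $\cV$ is ``the cone spanned by $-\alpha_1^\vee,\dots,-\alpha_r^\vee$, i.e.\ the negative Weyl chamber.'' These are two different cones whenever the root system has an irreducible factor of rank at least $2$. The negative Weyl chamber $\cC_-=\{v:\langle v,\alpha_i\rangle\leq 0\ \forall i\}$ has rays spanned by the negative \emph{fundamental coweights} $-\omega_i^\vee$; since each $\omega_i^\vee$ is a nonnegative combination of the $\alpha_j^\vee$ (the inverse Cartan matrix has nonnegative entries), one has $\cC_-$ contained in the cone generated by the $-\alpha_i^\vee$, and the inclusion is strict already for $A_2$, where $-\alpha_1^\vee=-2\omega_1^\vee+\omega_2^\vee$ pairs positively with $\alpha_2$ and so is not antidominant. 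Concretely, the toric slice $M\cong\AA^r$ at the closed orbit has the torus embedded by $t\mapsto(\alpha_i(t)^{-1})_i$, so its cone is exactly $\cC_-$ and its boundary divisors correspond to the rays $-\omega_i^\vee$, not to $-\alpha_i^\vee$; the simple coroots appear only as the images of the \emph{colors} under $\rho|_\Delta$, which is statement (3). The distinction is not cosmetic: it is precisely the gap between $\cC_+$ and the cone generated by the $\alpha_i^\vee$ that makes Hypothesis \ref{hyp:cone} and the description $\NE(\X_G)=\Lambda_{R^\vee}\cap\cC^+$ in Theorem \ref{thm:G}(1) nontrivial, and with your version of $\cV$ that description would come out wrong. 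Replacing ``simple coroots'' by ``fundamental coweights'' in your discussion of the $X_i$ and of the rays of $\cV$ repairs the argument, after which it matches the paper's.
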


\proof In characteristic zero, the first two statements were proved by Brion \cite[Section 3.1, 4.1]{Brion-spherical}. Since Brion's argument only uses basic properties of spherical embeddings \cite{Knop} and the local structure theorem \ref{thm:local-s}, \R{(1) and (2) hold in arbitrary characteristic}.

For (3), it suffices to \R{consider} the case where $G$ is simply connected. In this case, we know that each defining equation $f_i$ of $\uD_i$ is a $B\times B^-$-eigenfunction with the weight $(\chi_i,-\chi_i)$, where $\chi_i$ is a fundamental weight \cite[Proposition 6.1.11]{Brion-fbook}. Since $\Lambda$ is generated by the fundamental weights and $\nu_{\uD_i}(f_j)=\delta_{ij}$, we conclude that $\rho(\uD_i)$ is a simple coroot.\qed

\begin{thm}\label{thm:G}
Let $G$ be a semisimple linear algebraic $\kk$-group with arbitrary $\ch \kk \geq 0$, and $\X_G$ be the log smooth variety associated to the wonderful compactification of $G$. Let $\Lambda_{R^\vee}$ be the coroot lattice of $G$ and $\cC$ be a positive Weyl chamber of the root system of $(G,T)$ with a maximal torus $T$. Then we have the following:
\begin{enumerate1}
\item Any log-admissible effective curve class on $\u{\X}_G$ can be represented by a free $\A^1$-curve on $G$. Furthermore, we have 
$\NE(\X)=\Lambda_{R^\vee}\cap \cC^+.$
\item  If \R{we} further assume that $\ch\kk\nmid |\pi_1(G,T)|$, then the unique closed orbit is a fully free center of $\X_G$. In particular, $\X_G$ is separably $\A^1$-connected.
\item  The unique closed orbit as the center of $\X$ is \R{primitively free} if and only if $G$ is simply connected. In this case, it is separably $\A^1$-connected in arbitrary characteristic.
\end{enumerate1}
\end{thm}
\begin{proof}
We check \R{that} all hypotheses \R{of} Theorem \ref{thm:main-wonderful} are satisfied. Clearly the center $Y \cong G\times G/B\times B^- \subset \uX_G$ is $G$-separable. Hypothesis \ref{hyp:cone} follows from Proposition \ref{prop:lattice}. Hypothesis \ref{hyp:knop} can be verified by an argument similar to the proof of \cite[Theorem 2.4]{irr-rat}. We may consider the $G$-equivariant morphism $\u{\X}_G \to \u{\X}_{G_{ad}}$ where the latter is {smooth} \cite[Theorem 6.1.8]{Brion-fbook}. By Bruhat decomposition \cite[Theorem 22.6]{Borel}, the pullback of a color on $\u{\X}_{G_{ad}}$ is a (reduced) color on $\u{\X}_G$. Applying the projection formula as in the proof of \cite[Theorem 2.4]{irr-rat}, we get a basis of $B$-invariant curves for $NE(\u\X)$. All such curves lie on the center $\uY$ because the Weyl group acts transitively on the torus fixed points $\u\X^{T}$. See \cite[Theorem 2]{Brion-onecycle} and \cite[Proposition 6.2.3]{Brion-fbook}. Now the theorem follows from Lemma \ref{lem:back-to-log-class}, Proposition \ref{prop:lattice}, and Theorem \ref{thm:main-wonderful}. 
\end{proof}

%\begin{thm}Let $G/H$ be a sober separably spherical homogeneous space and $\X^\dagger$ be its canonical embedding with the log structure defined as above. Assume that the unique closed orbit is separable. Then $X$ is separably $\A^1$-connected if $\ch \kk \nmid |H/H_0|$, where $H_0$ is the identity component of $H$. \end{thm}

%%%%%%%%%%%%%%%%%%%%%%%%%%%%%%%%%%%%%%%%%%%A^n

\section{Logarithmic Hartshorne's conjecture}\label{sec:positive-tangent}

In this section, we fix a connected log smooth (possibly with singular underlying structure) variety $X$. Let $r: Y \to X$ be a birational log \'etale morphism of connected log smooth varieties with $\uY$ smooth. In particular, $Y$ has  normal crossings boundary. Denote by $\Delta_X$ and $\Delta_Y$ the boundary of $X$ and $Y$ respectively. Write $U = X \setminus \Delta_X = Y \setminus \Delta_Y$.

\begin{definition}\label{def:interior-ample}\Yi{(1.63) fixed by ample}
A line bundle $L$ over $X$ is called \R{\emph{interior-positive}} if $L \cdot \uC > 0$ for every irreducible proper curve $\uC$ on $\uX$ such that $U \cap \uC \neq \emptyset$.
\end{definition}

\begin{lemma}\label{lem:log-canonical-inv}
If $-K_X$ is interior-positive over $X$, then $-K_{X'}$ is interior-positive over $X'$ for any log \'etale birational morphism $\phi: X' \to X$.
\end{lemma}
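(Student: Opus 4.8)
The plan is to reduce the statement to a comparison of log canonical divisors under a log \'etale birational morphism, together with the behavior of the interior $U$ and of proper curves meeting it. First I would recall that since $\phi: X' \to X$ is log \'etale, the relative log differentials vanish, so $\Omega_{X'} \cong \phi^*\Omega_X$, and hence $K_{X'} = \phi^* K_X$ as line bundles (here $K_X$ denotes $\det \Omega_X$, the log canonical class). This is the key algebraic input: there is no discrepancy term in the logarithmic category, in contrast with the usual birational setting. In particular $-K_{X'} = \phi^*(-K_X)$.

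Next I would observe that $\phi$ restricts to an isomorphism on the interiors. Indeed, writing $U = \uX \setminus \Delta_X$ and $U' = \uX' \setminus \Delta_{X'}$, a log \'etale birational morphism is an isomorphism over the locus where the log structure is trivial; equivalently, $X'^\circ \cong X^\circ$ under $\phi$, since $\phi$ is an isomorphism of the underlying schemes over a dense open and the boundary divisors are matched up. So $\phi$ identifies $U'$ with $U$.

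With these two facts in hand, the verification is a short argument. Let $\uC' \subset \uX'$ be an irreducible proper curve with $U' \cap \uC' \neq \emptyset$. Since $\phi$ is birational and proper, $\uC' $ is not contracted unless it lies in the exceptional locus; but the exceptional locus of a birational log \'etale morphism is contained in the boundary $\Delta_{X'}$, which is disjoint from $U'$, so $\uC'$ meets $U'$ forces $\uC'$ to dominate its image $\uC := \phi(\uC')$, which is then an irreducible proper curve in $\uX$. Moreover $\phi(\uC') \cap U = \phi(\uC' \cap U') \neq \emptyset$ by the previous paragraph, so $\uC$ is one of the curves for which $-K_X$ is ample on the interior gives $(-K_X)\cdot \uC > 0$. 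By the projection formula,
\[
(-K_{X'}) \cdot \uC' = \phi^*(-K_X)\cdot \uC' = (-K_X)\cdot \phi_*\uC' = (\deg(\uC'/\uC))\,(-K_X)\cdot \uC > 0,
\]
since $\deg(\uC'/\uC) \geq 1$. This shows $-K_{X'}$ is ample on the interior.

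The only point that requires a little care — and which I expect to be the main obstacle to writing cleanly — is justifying that the exceptional locus of $\phi$ lies in $\Delta_{X'}$, i.e.\ that $\phi$ is an isomorphism over $X^\circ$; this is where one uses that $\phi$ is log \'etale and birational rather than merely proper birational, invoking the structure theory of log \'etale (toroidal) modifications already used in Section \ref{sec:DF-var} (e.g.\ Lemma \ref{lem:center-resoltuion}). Everything else is the projection formula and the identity $K_{X'} = \phi^*K_X$.
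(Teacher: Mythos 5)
Your proof is correct and follows exactly the paper's (very terse) argument: the identity $K_{X'}=\phi^*K_X$ for a log \'etale morphism together with the projection formula. The extra details you supply (that $\phi$ is an isomorphism over the interior, so curves meeting $U'$ push forward to curves meeting $U$) are the right ones and are implicitly assumed in the paper's one-line proof.
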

\begin{proof}
This is follows from the projection formula and $\phi^*K_X=K_{X'}$.
\end{proof} 

\begin{lemma}\label{lem:rational-curve}
Assume that  $-K_{X}$ is \R{interior-positive}. For any point $x \in U$, there is an irreducible rational curve $\uZ \to \uX$ such that $x \in \uZ$, and $-K_{X} \cdot \uZ \leq \dim X + 1$, where the equality holds only if $\uZ\to \uX$ factors through $U$. 
\end{lemma}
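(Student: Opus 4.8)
The plan is to adapt Mori's classical bend-and-break argument (as in \cite[Chapter II]{kollar}) to the present logarithmic setting, being careful to track when the rational curve produced actually passes through the interior $U$. First I would reduce to the case where $\uX$ is smooth: by Lemma \ref{lem:log-canonical-inv}, $-K_{X'}$ remains ample on the interior after a log \'etale resolution $X' \to X$, and since $x \in U$ lies over a point where the resolution is an isomorphism, a rational curve through the preimage of $x$ in $\uX'$ with the desired intersection bound pushes forward to one in $\uX$ with at most the same $-K_X$-degree (using $\phi^*K_X = K_{X'}$ and the projection formula). So from now on assume $\uX$ smooth and $\Delta_X$ a normal crossings divisor.

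Next, since $-K_X = -K_{\uX} - \Delta_X$ is ample on the interior, in particular its restriction to any curve meeting $U$ is positive; I would first handle the case $\ch\kk > 0$ by the Frobenius-pullback trick of Mori to force a free rational curve through $x$, then pass to characteristic zero by the standard reduction-mod-$p$ / spreading-out argument. The key step is to apply bend-and-break to a free rational curve $\uZ_0$ through $x$: attaching the curve to a fixed point and deforming while keeping $x$ on it, one breaks off components until one obtains an irreducible rational curve $\uZ$ through $x$ with $-K_{\uX} \cdot \uZ \le \dim X + 1$. The subtlety compared to the proper case is bounding $-K_X \cdot \uZ = (-K_{\uX} - \Delta_X)\cdot \uZ$ rather than $-K_{\uX}\cdot \uZ$; but $\Delta_X \cdot \uZ \ge 0$ always, so $-K_X \cdot \uZ \le -K_{\uX}\cdot\uZ \le \dim X + 1$, and I would arrange that $\uZ$ still passes through $x \in U$.

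The remaining point — and the one I expect to be the main obstacle — is the equality case: showing that if $-K_X \cdot \uZ = \dim X + 1$ then $\uZ \to \uX$ factors through $U$, i.e. $\Delta_X \cdot \uZ = 0$. Here one uses that $-K_X$ is ample on the interior rather than merely nef: if $\uZ$ met $\Delta_X$ it would still meet $U$ (being irreducible and not contained in $\Delta_X$, since a curve inside the boundary need not meet $U$ — so one must first argue $\uZ \not\subset \Delta_X$, which follows from the bend-and-break deformation being anchored at $x \in U$). Then $\Delta_X \cdot \uZ \ge 1$ would give $-K_{\uX}\cdot \uZ \ge \dim X + 2$, and re-running bend-and-break on $\uZ$ itself (now a rational curve of anticanonical degree $\ge \dim X + 2$ through the general point $x$) would break it, contradicting irreducibility — more precisely, one invokes the sharp form of the bound in \cite[II.5.8, II.5.14]{kollar} which already produces an irreducible curve with $-K_{\uX}\cdot\uZ \le \dim X + 1$, so the only way to reach $\dim X + 1$ with the boundary correction is $\Delta_X \cdot \uZ = 0$. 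Making this dichotomy precise while handling the inseparability issues in positive characteristic is the delicate part; everything else is a transcription of the classical proof.
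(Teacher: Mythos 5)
Your proposal is correct and follows essentially the same route as the paper: pass to a smooth normal crossings model via a log \'etale birational morphism, use $-K_{\uY}\cdot C = -K_{Y}\cdot C + \Delta_Y\cdot C \geq -K_X\cdot C > 0$ to apply Mori's bend-and-break for the usual anticanonical bundle, and then subtract the boundary term to get the bound, with equality forcing $\Delta\cdot\uZ = 0$ and hence (since $\uZ\ni x$ is not contained in the boundary) $\uZ\cap\Delta = \emptyset$. The detour through ``re-running bend-and-break on $\uZ$'' in the equality case is unnecessary --- the inequality $-K_X\cdot\uZ \leq (\dim X + 1) - \Delta\cdot\uZ$ already gives it --- but the argument you settle on is the paper's.
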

\begin{proof}
We first choose an arbitrary usual stable map $\uf: \uC \to \uX$ through $x$ with $\uC$ smooth and irreducible, hence a stable log map $f: C \to X$. Taking proper transform, we obtain a stable log map $f' : C \to Y$ with $f = f' \circ r$. Since $r$ is log \'etale, we obtain:
\[
(-K_{\uY})\cdot C = (-K_{Y})\cdot C + \Delta_Y \cdot C \geq -K_{Y} \cdot C = -K_{X} \cdot C > 0.
\]
By Mori's Bend-and-Break, there is a rational curve $\uZ \to \uY$ through $x$ with $(-K_{\uY})\cdot \uZ \leq \dim Y + 1$. Composing with $r$, we obtain a rational curve $\uZ \to \uX$ through $x$. Then we calculate
\[
(-K_{X})\cdot \uZ = (-K_{Y}) \cdot \uZ = (-K_{\uY}) \cdot \uZ - \Delta_Y \cdot \uZ \leq \dim X + 1.
\]
Here the equality holds only if $\Delta_Y \cdot \uZ = 0$. This concludes the proof.
\end{proof}

%A genus zero stable log map $f: C/S \to X$ is called a {\em $\PP^1$-curve} if it has an irreducible source curve with no contact markings. 
The next result is due to Keel-McKernan \cite{KM}. Here we slightly strengthen the result to fits our needs.

\begin{proposition}\label{prop:bend-break}
Assume that $-K_{X}$ is \R{interior-positive}. Then for any closed point $x \in U$ there is a genus zero stable log map $f: C \to X$ through $x$ such that it is minimal with respect to a fixed polarization $H$ among all \R{curves that contain $x$} and $f$ \R{satisfies} one of the following properties:\Qile{Can one rephrase this sentence? not sure what u mean, rephrased a bit}
\begin{enumerate}
 \item $f$ is a $\PP^1$-curve with $\deg f^*(-K_{X}) \leq n + 1$;
 \item $f$ is an $\AA^1$-curve with $\deg f^*(-K_{X}) \leq n$.
\end{enumerate}
\end{proposition}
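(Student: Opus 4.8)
The plan is to combine the rational curve produced in Lemma \ref{lem:rational-curve} with a bend-and-break / log-lifting argument, choosing the curve to be minimal for the polarization $H$ and then analyzing the dichotomy according to whether the proper transform meets the boundary $\Delta_Y$. First I would fix the closed point $x \in U$ and, among all irreducible rational curves $\uZ \to \uX$ passing through $x$ with $H \cdot \uZ$ minimal, pick one; Lemma \ref{lem:rational-curve} guarantees such curves exist and that for the minimal one we have $-K_X \cdot \uZ \le n+1$, with equality only if $\uZ$ factors through $U$. Pass to the proper transform $\uZ' \to \uY$ on the log \'etale resolution $r : Y \to X$; since $r$ is log \'etale, $r^* K_X = K_Y$, so $-K_X \cdot \uZ = -K_Y \cdot \uZ' = (-K_{\uY}) \cdot \uZ' - \Delta_Y \cdot \uZ'$.

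The key dichotomy is whether $\Delta_Y \cdot \uZ' = 0$ or $> 0$. In the first case the curve lies entirely in $U$, hence lifts trivially to a stable log map with trivial contact orders, i.e.\ a $\P^1$-curve, and $\deg f^*(-K_X) = (-K_{\uY}) \cdot \uZ' \le n+1$; this is case (1). In the second case $\uZ'$ meets the boundary, so $(-K_{\uY}) \cdot \uZ' = -K_X \cdot \uZ + \Delta_Y \cdot \uZ' \le (n+1) + \Delta_Y\cdot \uZ'$ forces $-K_X\cdot \uZ = (-K_{\uY})\cdot\uZ' - \Delta_Y\cdot\uZ' \le (n+1) - 1 = n$, using that $-K_X$ is ample on the interior so $-K_X \cdot \uZ > 0$ and the minimality bound. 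Here I would need to promote $\uZ$ (or rather a bend-and-break degeneration of it) to a genuine $\AA^1$-curve: one arranges, after possibly breaking, that the stable log map has a single contact marking supported on the boundary with irreducible smooth source. The mechanism is Corollary \ref{cor:rat-lift} (existence of a log lift in the genus zero case) combined with the Keel--McKernan argument that a minimal rational curve meeting the boundary meets it in a single point with the source $\P^1$; minimality with respect to $H$ together with bend-and-break prevents the appearance of extra boundary components or a reducible source in the limit. This yields case (2) with $\deg f^*(-K_X) \le n$.

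The main obstacle I expect is the second case: ensuring that after bend-and-break one genuinely obtains an \emph{irreducible, smooth-source} $\AA^1$-curve (a single contact marking), rather than a reducible configuration or a curve meeting $\Delta$ at several points. The point is that if $\uZ'$ met the boundary at two or more points, or with higher total contact, one could break it into pieces, each still through a general point or still meeting the boundary, contradicting $H$-minimality; this is exactly the content of the Keel--McKernan bend-and-break for log pairs, which we are invoking and lightly strengthening. Once the underlying curve is in the desired shape, Corollary \ref{cor:rat-lift} supplies the log structure on the source realizing it as a stable log map, and the $-K_X$-degree bound is just the computation above. I would also remark that the numerical bookkeeping ($n$ versus $n+1$) is forced purely by the equality clause in Lemma \ref{lem:rational-curve} together with $\Delta_Y \cdot \uZ' \ge 1$ whenever the curve meets the boundary.
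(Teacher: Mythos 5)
There is a genuine gap, and it sits exactly where you locate ``the main obstacle'': the reduction to a single contact marking. Your proposed mechanism --- that a minimal curve meeting $\Delta_Y$ in two or more points ``could be broken into pieces, \dots contradicting $H$-minimality'' --- is not the right mechanism and would not go through. Bend-and-break with one fixed point does not force a curve to degenerate, and in the actual argument $H$-minimality is used in the opposite direction: it guarantees that the family of curves does \emph{not} break. The paper's proof runs as follows. Suppose the lifted genus zero stable log map $f$ has $k\geq 2$ contact markings and mark a point $t\mapsto x$. A dimension count on the moduli of stable log maps gives
\[
\dim_{[f]}\fM(t)\;\geq\;(-K_{Y})\cap C + k + 1 - 3\;>\;k-2\;\geq\;0,
\]
so the image of $f$ moves fixing $x$. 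Minimality then yields a ruled surface $\pi:S\to B$ with a map $h:S\to\uY$, a section $\Sigma_1$ over the marking $t$ that is contracted by $h$ (hence $\Sigma_1^2<0$), and sections $\Sigma_2,\Sigma_3$ over two contact markings, which consequently have positive self-intersection. The contradiction comes from showing $\Sigma_2\cap\Sigma_3=0$, which is impossible on a ruled surface whose effective cone is two-dimensional. To get $\Sigma_2\cap\Sigma_3=0$ one needs an auxiliary extremality hypothesis that your proposal omits entirely: the paper normalizes $f$ to have \emph{maximal size} (length of $f^*\Delta_Y$) at a contact marking among all such curves through $x$, and it is this maximality that rules out the two contact sections meeting.

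So while your degree bookkeeping, the dichotomy on $\Delta_Y\cdot\uZ'$, and the appeal to Corollary \ref{cor:rat-lift} for the existence of the genus zero log lift all match the paper, the core step is missing two essential ingredients: the dimension count showing the curve with $\geq 2$ contact markings moves with $x$ fixed, and the maximal-size normalization feeding the ruled-surface section argument. Without these, ``lightly strengthening Keel--McKernan'' is an appeal to the conclusion rather than a proof of it.
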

\begin{proof}
By Lemma \ref{lem:rational-curve}, we choose an irreducible rational curve in $\uX$ through $x$. We then have a genus zero log map $f: C \to Y$. 
%We may assume $\deg (r\circ f)^*(-K_{X}) = \deg f^*(-K_{Y})$ is minimal among all rational curves in $\uX$ through $x$.  Since $x \in U$ and $-K_{Y}$ is ample, $\deg f^*(-K_{Y})$ is also minimal among all rational curves in $\uY$ through $x$. 
We may assume that $\deg f^*H$ is minimal.
By the usual Bend-and-Break lemma, we may further assume that $(-K_{\uY}) \cdot C \leq n+1$. The {\em size} at a contact marking is the length of $f^*\Delta_{Y}$. We may assume that $f$ achieves the maximal size at a contact marking among all such genus zero stable log maps through $x$.

Assume that $f$ is not an $\AA^1$ or $\PP^1$-curve. Let $k \geq 2$ be the number of contact markings of $f$. We also mark one point $t$ mapped to $x$. Let $\fM$ be the moduli stack of stable log maps with discrete data giving by $f$. Since 
\[
(-K_{Y}) \cdot C = (-K_{X}) \cdot C > 0,
\]
we calculate
\[
\dim_{[f]}\fM(t) \geq (-K_{Y}) \cdot C + k+1 - 3 >  k -2 \geq 0.
\]
where $\fM(t) \subset \fM$ denotes the closed substack fixing the marking $t$. Thus the image of $f$ moves fixing $t \mapsto x$. 

Since $f$ is minimal with respect to a polarization $H$, we obtain a ruled surface $\pi: S \to B$ with $B$ a smooth proper curve, a morphism $h: S \to \uY$ obtained by deforming $f$, and at least three sections of $\pi$ with one $\Sigma_1$ given by the marking $t$, and at least two other contact markings, write $\Sigma_2$ and $\Sigma_3$ with one given by the contact marking of maximal size. Since $\Sigma_1$ is contractible, it has negative self-intersection. Since $S$ is a ruled surface, $\Sigma_2$ and $\Sigma_{3}$ have positive self-intersections. But the choice of maximal size implies that $\Sigma_2 \cdot \Sigma_3 = 0$. This is a contradiction, since the effective cone of $S$ is two dimensional.

Finally the statement on the degree of $f^*(-K_{X})$ can be deduced from the following calculation:
\[
\deg f^*(-K_{X}) = \deg f^*(-K_{Y}) = (-K_{\uY}) \cdot C - \Delta_Y \cdot C \leq n+1 - \Delta_Y \cdot C.
\]
\end{proof}

\begin{thm}\label{thm:projective-space}
Let $X=(\uX,\Delta_X)$ be a log smooth projective variety such that
\begin{enumerate}
 \item $-K_{X}$ is interior-positive;
 \item there exists a point $x \in U$ such that for any $\AA^1$ or $\PP^1$-curve $f: C \to X$ through $x$, the pull-back $f^*T_{X}$ is ample.
\end{enumerate}
Then we have:
\begin{enumeratea}
 \item if there is an $\AA^1$-curve through $x$, then $\uX =\PP^n$ and $\Delta_X$ is a hyperplane;
 \item otherwise, there is a $\PP^1$-curve through $x$, and $X = \PP^n$ with $\Delta_X = \emptyset$.
\end{enumeratea}
\end{thm}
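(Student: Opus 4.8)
The plan is to mimic the classical Cho--Miyaoka--Shepherd-Barron / Kebekus characterization of projective space via ``curves of minimal degree through a point,'' adapted to the logarithmic setting using the $\AA^1$- and $\PP^1$-curves produced by Proposition~\ref{prop:bend-break}. First I would pass to a log \'etale resolution $r: Y \to X$ with $\uY$ smooth (as set up just before Lemma~\ref{lem:log-canonical-inv}); by Lemma~\ref{lem:log-canonical-inv} the hypothesis that $-K_X$ is ample on the interior is preserved, and the hypothesis on ampleness of $f^*T_X$ for curves through $x$ transfers to $Y$ since $r^*T_X = T_Y$ and a curve through $x \in U$ meeting the boundary of $Y$ only trivially is the same data on either side. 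So I may assume $\uX = \uY$ is smooth, which is exactly the reduction flagged in the Remark after Theorem~\ref{thm:logH}; the singular-to-smooth passage is handled entirely by the log \'etale birational invariance already in hand.

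Next, fix the point $x \in U$ and invoke Proposition~\ref{prop:bend-break} to get a minimal genus-zero log map $f: C \to X$ through $x$ which is either a $\PP^1$-curve with $\deg f^*(-K_X) \le n+1$ or an $\AA^1$-curve with $\deg f^*(-K_X) \le n$. In case (b), where no $\AA^1$-curve passes through $x$, the minimal curve is a genuine $\PP^1$-curve, i.e.\ an honest rational curve in $U$, with $f^*T_X$ ample of degree $\le n+1$; then $f^*T_X = \cO(a_1)\oplus\cdots\oplus\cO(a_r)$ with all $a_i \ge 1$ and $\sum a_i = \deg f^*T_X = \deg f^*(-K_X) \le n+1$ (here $\dim X = n$, so $r = n$), forcing $a_i = 1$ for all $i$ and hence the standard splitting type of lines on $\PP^n$. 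One then runs the classical argument: the space of such minimal free rational curves through $x$ gives, via the tangent directions at $x$, a finite morphism to $\PP(T_{X,x}) = \PP^{n-1}$, which by a degree/deformation count (curves are free and unobstructed) is generically injective, and the total family sweeps out all of $\uX$; since $\uX$ is smooth and $-K_X$ is ample in the interior one concludes as in Kebekus / CMSB that $\uX \cong \PP^n$ and $\Delta_X = \emptyset$ (the boundary must be empty because $f$ lies entirely in $U$ and the family dominates $\uX$, so $\uX = U$). In case (a), where an $\AA^1$-curve through $x$ exists, the minimal curve $f$ is an $\AA^1$-curve with one contact marking $\infty$; away from $\infty$ its image lies in $U$, and by hypothesis $f^*T_X$ is ample, so $f$ is a very free $\AA^1$-curve. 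Now $\deg f^*T_X = \deg f^*(-K_X) \le n$, and writing $f^*T_X = \cO(a_1)\oplus\cdots\oplus\cO(a_n)$ with all $a_i \ge 1$ gives $\sum a_i \le n$, hence again $a_i = 1$ for each $i$. This is the splitting type forcing $(\uX,\Delta_X) \cong (\PP^n, \text{hyperplane})$: the underlying curve $\uf: \PP^1 \to \uX$ meeting $\Delta_X$ once transversally at a single point behaves like a line meeting a hyperplane, and the family of such very free $\AA^1$-curves through $x$ again maps finitely to $\PP(T_{X,x}) = \PP^{n-1}$ via tangent directions at $x$.

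To conclude in case (a), I would argue that $U = \uX \setminus \Delta_X$ is itself isomorphic to affine $n$-space and $\Delta_X$ is the hyperplane at infinity. Concretely: the very free $\AA^1$-curves of minimal degree through $x$ sweep out a dense open set of $\uX$ and, because each meets $\Delta_X$ exactly once and transversally with contact order one, their closures form a family of rational curves on $\uX$ through $x$ of anticanonical degree $n$; applying the smooth characterization (Cho--Miyaoka--Shepherd-Barron, or \cite[Theorem 1.1]{CP} as flagged in the Remark --- noting $T_X = T_{\uX}(-\log \Delta_X)$ and the ampleness of this log tangent bundle) to the resolved smooth $\uX$ identifies $\uX$ with $\PP^n$. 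Then one checks the distinguished boundary divisor $\Delta_X$ is a hyperplane: it is the locus where the minimal $\AA^1$-curves hit their unique contact point, which is a divisor of $-K_{\PP^n}$-degree $1$ relative to the family of lines, i.e.\ a hyperplane. The main obstacle I anticipate is precisely the last identification --- controlling $\Delta_X$ rather than just $\uX$: one must verify that the contact condition (contact order exactly one at a single point of an irreducible boundary divisor) is rigid enough to force $\Delta_X$ smooth, irreducible, and of degree one, which requires combining the splitting computation $f^*T_X = \cO(1)^{\oplus n}$ with the exact sequence $0 \to T_X \to T_{\uX} \to \oplus_i \cO_{\Delta_i}(\Delta_i) \to 0$ (cf.\ the dual of Lemma~\ref{lem:comp-cotangent}) to pin down the number and degree of boundary components. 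Everything else is a routine transcription of the classical minimal-rational-curves argument, with ``free rational curve'' replaced by ``very free $\AA^1$-curve'' and Mori's bend-and-break replaced by Proposition~\ref{prop:bend-break}.
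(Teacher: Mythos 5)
Your proposal follows essentially the same route as the paper's proof: reduce to the smooth case by a log \'etale resolution, extract a minimal $\AA^1$- or $\PP^1$-curve through $x$ from Proposition~\ref{prop:bend-break}, determine the splitting type of the restricted tangent bundle, run the K\'ollar~V.3.7 / CMSB minimal-rational-curve argument on the underlying maps, and identify $\Delta_X$ by a degree count. Two small corrections: in the $\PP^1$-case the bound $\sum a_i\le n+1$ with $a_i\ge 1$ does not force all $a_i=1$ (the differential embeds $\cO(2)$ into $f^*T_{\uX}$, so the splitting is $\cO(2)\oplus\cO(1)^{\oplus n-1}$, exactly as in the paper's Lemma~\ref{lem:resolution-ample}, and your own phrase ``splitting type of lines'' is the correct conclusion); and the step you flag as the main obstacle is handled in the paper by a one-line degree computation --- once $\uY\cong\PP^n$ and the minimal $\AA^1$-curve is a line with $\Delta_Y\cdot C=(-K_{\uY})\cdot C-(-K_Y)\cdot C=(n+1)-n=1$, the divisor $\Delta_Y$ has degree one and is therefore a hyperplane, with no need for the cotangent sequence of Lemma~\ref{lem:comp-cotangent}.
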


We first observe the following:

\begin{lemma}\label{lem:resolution-ample}
Notations and assumptions as in Theorem \ref{thm:projective-space}. Let $f: C \to Y$ be an $\AA^1$ or $\PP^1$-curve through $x$ as in Proposition \ref{prop:bend-break}. Then 
%$f^*T_{\uY}$ is ample. Furthermore, we may assume $\deg f^*(-K_{Y})$ is minimal among all rational curves in $\uY$ through $x$, 
$f$ is an immeerrsion and  
\[
f^*T_{\uY} = \cO(2) \oplus \cO(1) \oplus \cdots \oplus \cO(1).
\]
\end{lemma}
\begin{proof}
The natural inclusion $T_{Y} \to T_{\uY}$ induces the following exact sequence:
\[
0 \to f^*T_{Y} \to f^*T_{\uY} \to \cT \to 0
\]
where $\cT$ is a torsion sheaf supported on the contact marking. Since $f^*T_{Y}$ is ample, the pull-back $f^*T_{\uY}$ is ample as well. By Proposition \ref{prop:bend-break}, we have $\deg f^*T_{\uY} \leq n+1$. Since $f^*T_{\uY}$ is ample, the splitting type of $f^*T_{\uY}$ then follows. The immersion follows from \cite[Chapter IV 2.11]{kollar}.
\end{proof}

\begin{proof}[Proof of Theorem \ref{thm:projective-space}]\Yi{(1.67) fixed }
Take the stable log map $f: C \to Y$ as in Lemma \ref{lem:resolution-ample}. 
If $f$ is a very free minimal $\PP^1$-curve through $x$, then any deformation of $f$ can not break or completely fallen into the boundary,  as it is minimal, and through a point in $U$. Thus any deformation of $f$ does not meet $\Delta_Y$. Using Lemma \ref{lem:resolution-ample} and the identical argument as in \cite[Chapter V 3.7]{kollar}, we deduce that $\uY \cong \PP^n$.  Then $\Delta_Y$ hence $\Delta_X$ is empty as any effective divisor in $\PP^n$ is ample. This implies that there is an \'etale birational map $\PP^n \to X$, from which we deduce that $X = \PP^n$.

Assume $f: C \to Y$ is an $\AA^1$-curve, and is minimal as in Lemma \ref{lem:resolution-ample}. The ampleness of $f^*T_{Y} \cong (r\circ f)^*T_{X}$ implies that $\deg f^*(-K_{Y}) \geq n$, hence $\deg f^*(-K_{Y}) = n$ by Proposition \ref{prop:bend-break}. We then calculate
\[
\Delta_Y \cdot [C] = (-K_{\uY}) \cdot C -  (-K_{Y}) \cdot C = n + 1 - n = 1.
\]
This implies that $f$ intersects $\Delta_Y$ transversally at the unique contact marking. We also observe that any deformation of $f$ can not break or completely lie on the boundary. Thus, deforming $f$ is the same as deforming the underlying stable maps, i.e. they are parameterized by the same moduli space. The same argument as in \cite[Chapter V 3.7]{kollar} implies $\uY \cong \PP^n$. The degree consideration implies that $\Delta_Y$ is a hyperplane in $\PP^n$. Thus $r$ has to be an isomorphism. 
\end{proof}

%\Yi{organize reference (1.68, 3.91)}

%\appendix
%
%\section{A review of logarithmic geometry and stable log maps}
%
%\input{appendix.tex}

%\bibliographystyle{amsalpha}             % (uses file "plain.bst")
%\bibliography{A1connected} 

\providecommand{\bysame}{\leavevmode\hbox to3em{\hrulefill}\thinspace}
\providecommand{\MR}{\relax\ifhmode\unskip\space\fi MR }
% \MRhref is called by the amsart/book/proc definition of \MR.
\providecommand{\MRhref}[2]{%
  \href{http://www.ams.org/mathscinet-getitem?mr=#1}{#2}
}
\providecommand{\href}[2]{#2}

\end{document}